\newcommand{\hth}{\mathrm{ht}}
\newcommand{\br}[3]{{$#1$}$\lower4pt\hbox{$\tp\atop\raise4pt \hbox{$\scriptscriptstyle{#2}$}$} ${$#3$}}
\newcommand{\tw}[3]{{$#1$}${\,\scriptscriptstyle {#2}}\atop\raise9pt\hbox{$\scriptstyle\tp$} ${$#3$}}
\newcommand{\ttps}[2]{{#1}\raise5pt\hbox{$\lower12pt\hbox{$\scriptstyle\tp$}\atop \lower0pt\hbox{$\tilde\;$}$}\raise4.5pt\hbox{${\scriptstyle{#2}}$}}
\newcommand{\st}[1]{\mbox{${\,\scriptscriptstyle {#1}}\atop\raise5.5pt\hbox{$*$}$}}
\newcommand{\rd}[1]{\mbox{${\,\scriptscriptstyle {#1}}\atop\raise5.5pt\hbox{$\bullet$}$}}
\newcommand{\rt}[1]{\otimes_\chi}
\newcommand{\lt}[1]{\mbox{${\,\scriptscriptstyle {#1}}\atop\raise5.5pt\hbox{$\ltimes$}$}}
\newcommand{\btr}{\raise1.2pt\hbox{$\scriptstyle\blacktriangleright$}\hspace{2pt}}
\newcommand{\btl}{\raise1.2pt\hbox{$\scriptstyle\blacktriangleleft$}\hspace{2pt}}
\newcommand{\lcr}{\raise1.0pt \hbox{${\scriptstyle\rightharpoonup}$}}
\newcommand{\rcr}{\raise1.0pt \hbox{${\scriptstyle\leftharpoonup}$}}
\newcommand{\ttp}{{\lower12pt\hbox{$\tp$}\atop \hbox{$\tilde\;$}}}
\newcommand{\id}{\mathrm{id}}
\newcommand{\Tc}{\mathcal{T}}
\newcommand{\Fin}{\mathrm{Fin}}
\newcommand{\Sym}{\mathrm{Sym}}
\newcommand{\Ac}{\mathcal{A}}
\newcommand{\A}{{A}}
\newcommand{\J}{J}
\newcommand{\Sc}{\mathcal{S}}
\newcommand{\Ru}{\mathcal{R}}
\newcommand{\Cc}{\mathcal{C}}
\newcommand{\Q}{\mathcal{Q}}
\renewcommand{\O}{\mathcal{O}}
\newcommand{\C}{\mathbb{C}}
\newcommand{\Sbb}{\mathbb{S}}
\newcommand{\Qbb}{\mathbb{Q}}
\newcommand{\Z}{\mathbb{Z}}
\newcommand{\N}{\mathbb{N}}
\newcommand{\tp}{\otimes}
\newcommand{\zt}{\zeta}
\newcommand{\ve}{\varepsilon}
\newcommand{\gm}{\gamma}
\newcommand{\dt}{\delta}
\newcommand{\op}{\oplus}
\newcommand{\la}{\lambda}
\newcommand{\tr}{\triangleright}
\newcommand{\tl}{\triangleleft}
\newcommand{\Char}{\mathrm{ch }}
\newcommand{\Crt}{\C\backslash{\sqrt[\Z]{t}}}
\newcommand{\Crte}{\C\backslash{\sqrt[\Z]{1}}}
\newcommand{\End}{\mathrm{End}}
\newcommand{\Hom}{\mathrm{Hom}}
\newcommand{\Ind}{\mathrm{Ind}}
\newcommand{\rk}{\mathrm{rk}}
\newcommand{\Rm}{\mathrm{R}}
\newcommand{\ad}{\mathrm{ad}}
\newcommand{\Ad}{\mathrm{Ad}}
\newcommand{\La}{\Lambda}
\newcommand{\g}{\mathfrak{g}}
\renewcommand{\b}{\mathfrak{b}}
\renewcommand{\k}{\mathfrak{k}}
\newcommand{\h}{\mathfrak{h}}
\newcommand{\s}{\mathfrak{s}}
\renewcommand{\o}{\mathfrak{o}}
\newcommand{\m}{\mathfrak{m}}
\newcommand{\eps}{\epsilon}
\newcommand{\nn}{\nonumber}
\newcommand{\p}{\mathfrak{p}}
\renewcommand{\l}{\mathfrak{l}}
\newcommand{\Spec}{\mathrm{Spec}}
\newcommand{\si}{\sigma}
\newcommand{\al}{\alpha}
\renewcommand{\b}{\mathfrak{b}}
\newcommand{\bt}{\beta}
\newcommand{\be}{\begin{eqnarray}}
\newcommand{\ee}{\end{eqnarray}}
\newtheorem{thm}{Theorem}[section]
\newtheorem{propn}[thm]{Proposition}
\newtheorem{lemma}[thm]{Lemma}
\newtheorem{corollary}[thm]{Corollary}
\newtheorem{definition}[thm]{Definition}
\newcommand{\parag}{\advance\prg by1 {\noindent\bf\thesection.\the\prg\hspace{6pt}}}
\begin{document}
\title{Vector bundles on quantum conjugacy classes}
\author{
Andrey Mudrov \vspace{10pt}\\
\small University of Leicester, \\
\small University Road,
LE1 7RH Leicester, UK,
\vspace{10pt}\\
\small
 Moscow Institute of Physics and Technology,\\
\small
9 Institutskiy per., Dolgoprudny, Moscow Region,
141701, Russia,
\vspace{10pt}\\
\small e-mail:  am405@le.ac.uk}

\date{ }

\maketitle

\begin{abstract}
Let  $\mathfrak{g}$ be a simple complex Lie algebra of a classical type and  $U_q(\mathfrak{g})$ the
corresponding Drinfeld-Jimbo quantum group at $q$ not a root of unity.
With every point $t$ of the fixed maximal torus $T$ of an algebraic group $G$ with Lie algebra $\mathfrak{g}$ we associate an additive category $\mathcal{O}_q(t)$ of
$U_q(\mathfrak{g})$-modules that is stable under tensor product  with finite-dimensional quasi-classical
$U_q(\mathfrak{g})$-modules.
We prove that
 $\mathcal{O}_q(t)$ is essentially semi-simple and use it to explicitly quantize equivariant vector
 bundles on the conjugacy class of $t$.
\end{abstract}

{\small \underline{Key words}:  conjugacy classes, vector bundles,  quantization, contravariant forms, extremal projector }
\\
{\small \underline{AMS classification codes}: 17B10, 17B37, 53D55.}
 \newpage
\tableofcontents
\newpage
\section{Introduction}
This paper is devoted to  quantization of  the category of equivariant vector bundles on
a semi-simple conjugacy class of a simple complex algebraic group $G$. This includes quantization
of the function algebra as a trivial bundle of rank 1.
This work is a continuation of a project started off in \cite{M4,M5,JM} and  technically based on \cite{M1,M2}.
 A complete analysis is done for groups of the four infinite series.
With regard to five exceptional types, we believe that the approach is generally applicable  as well.
The main technical issue to address is the quasi-classical behaviour of Shapovalov elements, which is sorted
out for the classical types in this paper.
 
Semi-simple are the only conjugacy classes that  are affine sub-varieties
in $G$ \cite{Spr}.
By a vector bundle we understand a projective module of global sections over the coordinate ring,
 in accordance with the Serre-Swan theorem, \cite{S,Sw}.
We adopt this point of view in  the quantum setting and treat vector bundles over a non-commutative space
as projective (one-sided) modules over its quantized coordinate ring. This way the deformation quantization programme for Poisson
varieties naturally extends to the realm of vector bundles.
In the presence of symmetry, it essentially becomes a part of representation theory.

The  Poisson structure underlying the quantization of our concern descents from the Semenov-Tian-Shansky bracket on $G$
related to the standard classical $r$-matrix, \cite{STS}. It makes $G$ a Poisson variety over the Poisson group $G$ with
the Drinfeld-Sklyanin bracket generated by $r$, with respect to the conjugation action. The non-trivial Poisson structure on $G$
implies quantization
of the symmetry
group first. Equivariance is then understood relative to the quantized universal enveloping algebra $U_q(\g)$
of the Lie algebra $\g=\mathrm{Lie}(G)$.

The Semenov-Tian-Shansky bracket on $G$ is analogous to the $G$-invariant Lie bracket on the Lie algebra $\g\simeq \g^*$, which restricts to
every adjoint orbit. Equivariant quantization of semi-simple orbits in $\g$ is of long interest and has been understood some twenty years ago
\cite{DGS,DM,EE,AL}, in what concerns the conventional point of view restricted to function algebras.
The underlying representation theory involves  parabolic Verma modules over the classical universal enveloping algebra $U(\g)$.

A representation theoretical approach to equivariant quantization consists in realization of the quantized function algebra on a $G$-space
by linear operators on a  $U_q(\g)$-module (respectively $U(\g)$-module in the case of a $\g$-orbit). It is natural
to seek for a realization of a more general quantum vector bundle  via
linear mappings between modules from an appropriate category. They generalize parabolic modules
from the Bernstein-Gelfand-Gelfand  category  of $U_q(\g)$-modules, which we denote by $\O_q$.  Such modules
form an additive subcategory in $\O_q$ determined, up to an isomorphism, by a point $t$ from a fixed maximal torus $T\subset G$.
Modules of highest weight in it are parameterized by finite-dimensional irreducible representations of the subalgebra $\k\subset \g$ centralizing $t$.

  The category under study is stable under tensor product with finite dimensional $U_q(\g)$-modules. It is generated by a base module $M_\la$ of highest weight $\la$ associated
 with $t$ and denoted by $\O_q(t)$.
 The base weight $\la$ is not uniquely determined by $t$ but up to an action of the group of $U_q(\g)$-characters
 known to be $\simeq \Z_2^{\rk\> \g}$, where $\Z_2=\Z/2\Z$. This group acts by isomorphisms on the
 categories associated with $t$ via
tensor product with the corresponding one-dimensional module.

The base module $M_\la$ supports quantization of the coordinate
ring $\C[O]$ of the conjugacy class $O\ni t$ as a subalgebra in $\End(M_\la)$. At least for all non-exceptional $G$, different $t$ give rise to
 isomorphic quantizations of $\C[O]$ but different
faithful representations,  cf. e.g. \cite{AM}. We expect that  be true for all types of $G$.

If $t$ is of finite order, then $\O_q(t)$ is
semi-simple for each $q$ not a root of unity except maybe for a finite set of values.
In all cases that we worked out explicitly \cite{M5,JM}, the set of exceptional $q$ is empty.
For general $t$, the category $\O_q(t)$ is semi-simple for almost all $q$ away from
the roots of the spectrum of the adjoint operator $\Ad_t\in \End(\g)$.
 
 The category  $\O_q(t)$  proves to be equivalent to the category of equivariant finitely generated projective modules
over the quantized polynomial ring $\C[O]$, as a module category over the finite-dimensional
quasi-classical representations of $U_q(\g)$. As an Abelian category, $\O_q(t)$ is equivalent to that
of classical $\k$-modules which are submodules in finite-dimensional $\g$-modules.

In the final section, we construct an equivariant star product on $\C[O]$ by twisting the multiplication on the
RTT algebra of  functions on the quantum group, \cite{FRT}.
Its restriction to a space of "$\k$-invariants" delivers a flat associative  deformation of $\C[O]$.
This construction is not new for $\k$ of Levi type. For a non-Levi $\k$,
it was done only for  even quantum spheres in \cite{M3}, with the use of elementary harmonic analysis on
the quantum Euclidean space.
We further extend this star product to
associated vector bundles along the lines of \cite{DM}.
Following an approach of \cite{M2} we explicitly express it
through the extremal projector of $U_q(\g)$.

Our main technical tool is contravariant form on $U_q(\g)$-modules and its relation with extremal projector.
Such forms appear in this theory in a few incarnations.

First of all, we use the contravariant form on  Verma modules to construct their generalized parabolic quotients.
Matrix entries of the inverse form constitute  Shapovalov elements $\phi_{m\al}\in U_q(\g_-)$ of weight $-m\al$ for
a simple positive root  $\al$ of $\k$ and $m\in \N$. Applied to the highest vector, $\phi_{m\al}$ produce  extremal vectors
in Verma modules that vanish in the generalized parabolic quotients.
We require   that $\phi_{m\al}$ turns into the power $f_\al^m$ of the root vector $f_\al\in \k_-$ in the classical limit $q\to 1$.
We check it via a direct analysis of matrix elements of the inverse Shapovalov form  \cite{M6} using a
factorization of Shapovalov elements  \cite{M8}.

Another application of contravariant forms is a proof of irreducibility of the base module $M_\la$.
We approximate its opposite module $M_\la'$ of lowest weight $-\la$ by
a sequence of $U_q(\g_+)$-submodules in a certain system $\Xi$ of finite-dimensional $U_q(\g)$-modules,
using the extremal projector.
The set $\Xi$ comprises all counterparts of classical $G$-modules $V$ with an orbit isomorphic to $O$ (exactly those appearing in
$\C[O]$). They admit  $U_q(\g_+)$-homomorphisms $M_\la'\to V$ whose common kernel over all $V\in \Xi$ is zero.
Then we approximate the inverse invariant pairing between $M_\la$ and $M_\la'$ by
certain extremal vectors in $V\tp M_\la$ when $V$ ranges in $\Xi$.
An extremal vector determines a map $M_\la \to  V$ whose injectivity on certain spaces
is equivalent to irreducibility of $V$.

The third appearance of contravariant forms in this presentation is a proof of complete reducibility of tensor products.
From  the base module we proceed to the category $\O_q(t)$  it generates.
It is found in \cite{M1} that  a contravariant form controls complete reducibility of tensor products of modules
of highest weights. A relation between the  form and extremal projector established in \cite{M2} delivers a practical computational machinery
which helps us prove that all modules from the category under study are semi-simple for almost all $q$. The simple objects are generalized parabolic Verma modules $M_{\la,\xi}$ of highest weight $\la+\xi$, where
$\xi$ is a highest weight of a $\k$-submodule in a finite dimensional $\g$-module.

Finally,  inverted contravariant forms participate in definition of the star product on $\C[O]$ and its actions on vector bundles,
like in \cite{AL,DM,EE,EEM,KST}.

We prove that the locally finite part of the $U_q(\g)$-module $\End(M_{\la})$ is a quantization
of $\C[O]$, for almost all $q$.
An irreducible decomposition of  $V\tp M_\la\in \O_q(\la)$ gives rise to a direct sum decomposition of  $V\tp \End(M_\la)$ making
the locally finite part of  $\Hom   (M_{\la,\xi},M_\la)$ (respectively, $\Hom(M_\la,M_{\la,\xi})$)
 a quantization of the equivariant vector bundle with the $\k$-submodule of highest weight $\xi$ in $V$ (respectively, its dual) as the fiber.
The direct sum decomposition of  $V\tp \End(M_\la)$ is quasi-classical and goes to the decomposition of the trivial vector bundle $V\tp \C[O]$
into a sum of equivariant sub-bundles.

\section{Preliminaries}
\label{SecPrelim}

Throughout the paper we assume that the deformation parameter $q$ takes values in the set  $\Crte$ of non-zero complex numbers which are not
a root of unity.
We introduce topology on $\Crte$ as induced from Zariski topology on $\C$. That is, an open set in $\Crte$ is
the complement to a finite set of points, possibly empty.
By all $q$ we understand all from $\Crte$, and almost all means all from a non-empty Zariski open set.

We mostly work over the ground filed $\C$ but some topics
require consideration  over  $\C[q,q^{-1}]$ and further  extension to the local ring $\C_1(q)$ of rational functions in $q$
regular at the classical point $q=1$.

By deformation of a complex vector space $A$ we mean an arbitrary $\C[q,q^{-1}]$-module $A_q$ such that $A_q/(q-1)A_q\simeq A$. We call it flat if,
upon extension over $\C_1(q)$, $A_q\simeq A\tp \C_1(q)$.
By quantization of $A$ we understand its flat deformation, along with additional structures, e.g. algebras, modules {\em etc}.
Such a structure is preserved by a quantum group in equivariant quantization.
\subsection{Quantum group basics}
Let  $\g$ be a simple complex Lie algebra of the classical type and  $\h\subset \g$ its Cartan subalgebra. Fix
a triangular decomposition  $\g=\g_-\op \h\op \g_+$  with  maximal nilpotent Lie subalgebras
$\g_\pm$.
Denote by  $\Rm=\Rm_\g$ the root system of $\g$, and by $\Rm^+=\Rm^+_\g$ the subset of positive roots with basis $\Pi=\Pi_\g$
of simple roots.  The weight lattice is denoted by  $\La=\La_\g$  and the semi-group of dominant weights by $\La^+=\La^+_\g$.
We use a similar notation for reductive subalgebras in  $\g$
and drop the subscript  when only the total Lie algebra $\g$ is in the context.

Choose an inner product $(.,.)$ on $\h$ as a multiple of a restricted $\ad$-invariant form
and transfer it to the  space $\h^*$ of linear functions on $\h$ by duality. 
For every $\la\in \h^*$  denote by  $h_\la \in \h$  a unique element such that $\mu(h_\la)=(\mu,\la)$, for all $\mu\in \h^*$.

By $U_q(\g)$ we understand the standard quantum group \cite{D1,ChP} as a complex Hopf algebra with the set of generators $e_\al$, $f_\al$, and
$q^{\pm h_\al}$ labeled with a simple root $\al$ and satisfying relations
$$
q^{h_\al}e_\bt=q^{ (\al,\bt)}e_\bt q^{ h_\al},
\quad
[e_\al,f_\bt]=\dt_{\al,\bt}[h_\al]_q,
\quad
q^{ h_\al}f_\bt=q^{-(\al,\bt)}f_\bt q^{ h_\al},\quad \forall \al, \bt \in \Pi.
$$
The elements $q^{h_\al}$ are assumed invertible, with $q^{h_\al}q^{-h_\al}=1$, while  $\{e_\al\}_{\al\in \Pi}$ and $\{f_\al\}_{\al\in \Pi}$
also satisfy quantized Serre relations, see \cite{ChP} for details. Here and
throughout the text we use the notation $[z]_q=\frac{q^{z}-q^{-z }}{q-q^{-1}}$ for $z\in \h+\C$.
The complex number $q\not =0$ is not a root of unity.

We fix the Hopf algebra structure on $U_q(\g)$ by setting  comultiplication on the generators as
$$
\Delta(f_\al)= f_\al\tp 1+q^{-h_\al}\tp f_\al,\quad \Delta(q^{\pm h_\al})=q^{\pm h_\al}\tp q^{\pm h_\al},\quad\Delta(e_\al)= e_\al\tp q^{h_\al}+1\tp e_\al.
$$
Then the antipode acts on the generators by the assignment
$$
\gm( f_\al)=- q^{h_\al}f_\al, \quad \gm( q^{\pm h_\al})=q^{\mp h_\al}, \quad \gm( e_\al)=- e_\al q^{-h_\al}.
$$
and the counit returns
$$
\eps(e_\al)=0, \quad \eps(f_\al)=0, \quad \eps(q^{h_\al})=1.
$$

We denote by $U_q(\h)$,  $U_q(\g_+)$, and $U_q(\g_-)$  the associative unital subalgebras in $U_q(\g)$  generated by $\{q^{\pm h_\al}\}_{\al\in \Pi}$, $\{e_\al\}_{\al\in \Pi}$, and $\{f_\al\}_{\al\in \Pi}$, respectively.
The quantum Borel subgroups are defined as $U_q(\b_\pm)=U_q(\g_\pm)U_q(\h)$; they are Hopf subalgebras in $U_q(\g)$.

We consider an involutive coalgebra anti-automorphism and algebra automorphism $\si$ of $U_q(\g)$ setting it on the
generators by the assignment
$$\si\colon e_\al\mapsto f_\al, \quad\si\colon f_\al\mapsto e_\al, \quad \si\colon q^{h_\al}\mapsto q^{-h_\al}.$$
The involution $\omega =\gamma^{-1}\circ \si=\si \circ \gm$ is an algebra anti-automorphism and preserves
comultiplication.

With every  normal order on $\Rm^+$ (a sum of two positive roots is between the summands) one associates
a Lusztig system of root vectors $f_\al,e_\al$, for all $\al \in \Rm^+$.
Every such pair defines an associative subalgebra $U_q(\g^\al)\subset U_q(\g)$ that is isomorphic to $U_q\bigl(\s\l(2)\bigr)$.
Ordered monomials in $e_\al$ and $f_\al$ deliver a Poincare-Birkhoff-Witt (PBW) basis in $U_q(\g_+)$
and in $U_q(\g_-)$, respectively, \cite{ChP}.

By $G$ we denote a simple algebraic group with Lie algebra $\g$. Unless it is explicitly specified,
it can be any group between simply connected and the adjoint group.
Let $T\subset G$ denote the maximal torus in $G$ whose Lie algebra is $\h$.
We denote by $T_\Qbb\subset T$ the subset of
elements of finite order, i.e. $t\in T_\Qbb$ if and only if $t^m=1$ for some integer $m$.

For each $t\in T$ we call its centralizer $\k\subset \g$ generalized Levi subalgebra.
The polarization of $\g$ induces a  polarization $\k=\k_-\op \h\op \k_+$ such that
$\k_\pm\subset \g_\pm$. The subalgebra $\k\subset \g$  is called Levi if $\Pi_\k\subset \Pi_\g$.
We call it pseudo-Levi if it is not isomorphic to a Levi subalgebra via an internal isomorphism,
for example, if $\k\varsubsetneq \g$ is semi-simple.
This terminology is compatible with what is accepted in the literature, see e.g. \cite{Cost}.
In general, even  for $t$   with a Levi centralizer  there may be other points in  $T$ from the same conjugacy class whose centralizers are not Levi in the above sense.
The Levi type of $\k$ is special  because $U(\k)$ is quantizable as a Hopf subalgebra $U_q(\k)\subset U_q(\g)$.

Given a $U_q(\g)$-module $Z$ we denote by $Z[\mu]$ the  subspace of weight $\mu\in \h^*$,
 i.e. the set of vectors $z\in Z$ satisfying
$q^{h_\al}z=q^{(\mu,\al)}z$ for all $\al \in \Pi$.
The set of weights of $Z$ is denoted by $\La(Z)$, which notation is also used for general $U_q(\h)$-modules.
All modules are assumed $U_q(\h)$-diagonalizable with finite dimensional weight spaces or locally finite over $U_q(\g)$
with finite dimensional isotypic components.
For such a module $Z$, the (right or left) restricted dual is denoted by $Z^*$. If $Z$ is a module of highest weight $\la$,
its opposite module of lowest weight $-\la$ is denoted by $Z'$. There is a linear bijection $\hat \si: Z\to Z'$
intertwining  the representation homomorphisms $\pi$ and $\pi'$ via the involution $\si$:
$\hat \si\circ \pi(x) = \pi'(x)\circ \si$ for
all $x\in U_q(\g)$.

For a diagonalizable $U_q(\h)$-module $V$ with finite dimensional weight spaces we
define infinitesimal character
as a formal sum
$
\sum_{\mu\in \La(V)} \dim V[\mu]_\mu e^{\mu}.
$
We write $\Char(V)\leqslant \Char(W)$ if $\dim V[\mu]\leqslant \dim W[\mu]$ for all $\mu$
and $\Char(V)< \Char(W)$ if the inequality is strict for some $\mu$.

One-dimensional representations of $U_q(\g)$ are trivial on $U_q(\g_\pm)$ and assign $\pm 1$ to every $q^{h_\al}$, $\al\in \Pi$.
They form a group of characters isomorphic to $\Z_2^{\rk \g}$.
The category of quasi-classical finite dimensional $U_q(\g)$-modules is denoted by $\Fin_q(\g)$.
Such modules are diagonalizable with weights from $q^\La$.
General finite dimensional $U_q(\g)$-modules are obtained from  $\Fin_q(\g)$ by tensoring with a one dimensional
module.

\subsection{Poisson Lie structure on conjugacy classes}
In this section we recall the Poisson-Lie structure on the group $G$ that is a Poisson-Lie
analog of the Kostant-Kirillov-Souriau bracket on the (dual of) Lie algebra $\g$.

Fix the $\ad$-invariant inner product on $\g$.
Let $f_\al$ and $e_\al$, $\al \in \Pi_\g$ be the Chevalley generators of the Lie algebra $\g$
satisfying $(e_\al,
f_\al)=1$ and let $\{h_i\}_i$ be an orthogonal basis in $\h$.
The element
$$
r=\sum_i h_i\tp h_i + \sum_{\al \in \Rm^+} e_\al\tp f_\al \in \g\tp \g
$$
is called classical r-matrix. It satisfies the classical Yang-Baxter equation, cf. \cite{D1}.

Consider the left and right invariant vector fields on the group $G$,
$$
\xi^lf(g)=\frac{d}{dt}f(ge^{t\xi})|_{t=0}, \quad \xi^rf(g)=\frac{d}{dt}f(e^{t\xi}g)|_{t=0},
$$
generated by $\xi \in \g$,  where $f\in \C[G]$ and  $g\in G$.
The bivector field $r^{l,l}-r^{r,r}$ makes $G$ a Poisson group, cf. \cite{D3}.

For each  $\xi\in \g$, let $\xi^\ad$ denote the vector field $\xi^l-\xi^r$ on the group $G$.
Put $r_\pm=\frac{1}{2}(r_{12}\pm r_{21})$ to be the symmetric and skew symmetric parts of $r$.
The Semenov-Tian-Shansky (STS)  bivector field
\be
r_-^{\ad,\ad} +(r_+^{r,l}-r_+^{l,r})
\label{STSbr}
\ee
on $G$
is a Poisson structure, \cite{STS}.
It makes $G$ a Poisson-Lie manifold over the Poisson group $G$ under the conjugation action.
This bivector field is tangent to every conjugacy class making it a homogeneous Poisson-Lie manifold over $G$, \cite{AlM}.

Quantization of the STS bracket gives rise to an algebra $\C_q[G]$ satisfying  reflection equation \cite{KS},
and a semi-simple conjugacy class can be quantized as a quotient of  $\C_q[G]$, provided certain
technical conditions are fulfilled. This point of view was developed, e.g., in \cite{M7}.
In this paper, we view $O$ as a quotient space $G/K$, where the subgroup $K\subset G$
is the centralizer of $t$. We construct a local star product on sections of equivariant vector
bundles on $O=\Ad_G(t)$ in the spirit of \cite{DM}.

Let us describe the restriction of the STS bracket to the
class $O$ of a semi-simple element $t\in G$.
The Lie algebra $\g$ splits into the direct sum
$\g=\k\oplus\m$ of vector spaces, where
 $\m$ is the $\Ad_t$-invariant subspace where $\Ad_t-\id$ is invertible.
This decomposition is orthogonal with respect to the $\ad$-invariant form on $\g$,
and $\m$ splits to direct sum $\m_-\op \m_+$ of mutually dual subspaces $\m_\pm=\m\cap \g_\pm$.

The tangent space to $O$ at the point $t$ is naturally identified with $\m$ via the action of $G$.
Choose a basis $\{e_\mu\}\subset \m$   of root vectors. We have $(e_\mu,e_\nu)=0$
unless $\mu +\nu=0$ and assume the normalization $(e_\mu,e_{-\mu})=1$.
The restriction of the Poisson bivector (\ref{STSbr}) to tangent space at the point $t$
is the bivector
$$r_{\m\wedge \m}+ \sum_{\mu \in \Rm_{\g/ \k}}
\frac{\mu(t)+1}{\mu(t)-1}
e_\mu \tp
e_{-\mu}\in \m\wedge \m,
$$
where the first term is the orthogonal projection of $r$ to $\m\wedge \m$.
The second term is correctly
defined since $\Ad_t-\id$  is invertible on $\m$.
\subsection{Definition of generalized parabolic Verma modules.}
\label{Sec-GPVM-def}
In this section we introduce the main object of our study: a class of $U_q(\g)$-modules
that generalize parabolic Verma modules.
We postpone a detailed study of their properties to Section 5 because we need
a certain machinery   that we develop in Sections 3 and 4.

 Every root $\al$ is a (multiplicative) character on $T$ returning
the eigenvalue of the operator $\Ad_t$ on the $\al$-root subspace in $\g$.
By definition of centralizer subalgebra, we have $\al(t) =1$  if and only if $\al\in \Rm_\k$.
For  $t\in T_\Qbb$ and  $\al\in \Rm^+_{\g/\k}$,  the value  $\al(t)\not =1$ is a complex root of unity.

Let $\kappa\in \h^*$ designate the  half-sum of positive roots of  $\k$.
\begin{definition}
  We call  $\la\in \h^*$ a base weight associated with $t\in T$ if
\be
q^{(\la+\rho,\al^\vee)}_\al=\pm\sqrt{\al(t)}q^{(\kappa,\al^\vee)}_\al, \quad \forall \al \in \Pi_\g.
\label{def_base weight}
\ee
\end{definition}
Here by $\sqrt{\al(t)}$ we mean one of two square roots of $\al(t)\in \C^\times$.
Thus the point $t$ does not determine a multiplicative base weight uniquely but up to the sign in $\pm \sqrt{\al(t)}$ for each $\al\in \Pi_\g$.

Recall that an assignment  $q^{h_\al}\mapsto \pm 1$, $f_\al\mapsto 0$, $e_\al\mapsto 0$ for each $\al \in \Pi_\g$ defines a one-dimensional representation of $U_q(\g)$.
They form a group of $U_q(\g)$-characters that is isomorphic to $\Z_2^{\rk \>\g}$.
This group freely acts on the set of base weights of the same $t$.
Each base weight will label a category of generalized parabolic modules of our interest.
The group of $U_q(\g)$-characters acts on those categories by isomorphisms via  tensoring with the corresponding
one-dimensional $U_q(\g)$-modules. Thus one can think that $\la$ has been fixed for each $t$ in what follows.

By $\La_\k$ we denote the weight lattice of the semi-simple part of $\k$.
Since $\Rm_\k\subset \Rm_\g\subset \h^*$ and the canonical form on $\h^*$ is non-degenerate, we consider $\La_\k$ as a subset in $\h^*$.
A base weight $\la$ generates an affine shift $\la+\La^+_\k\subset \h^*$ of the semi-lattice $\La^+_\k$ of $\k$-dominant weights.
Elements from $\la+\La^+_\k$ will be highest weights of the modules of our concern.
The set $\La^+_\k\cap \La_\g$ labels  irreducible equivariant vector bundles on  $O$, thanks to the
Frobenius reciprocity.

For each $\xi \in \La^+_\k$ consider a character of the algebra $U_q(\h)\simeq \C[T]$
by the assignment
\be
q^{h_\al}\to q^{(\la+\xi,\al)}=\pm\sqrt{\al(t)}q^{2(\kappa- \rho+\xi, \al)}, \quad \forall \al \in \Pi_\g,
\label{base weight}
\ee
where the signs have been fixed with the choice of $\la$. Note with care that
we use  exponential presentation for the Cartan generators  for computational convenience.
It means that  $q^{(\la+\xi,\al)}=q^{(\la+\xi,\al)}$ is polynomial in $q$.
The weight $\la+\xi$ satisfies a Kac-Kazhdan condition
\be
[(\la+\xi+\rho,\al^\vee)-m_\al]_{q_\al}=0, \quad m_\al=(\xi,\al^\vee)+1, \quad \forall \al \in \Pi_\k.
\label{Kac-Kazhdan}
\ee
Therefore the Verma  module $\tilde M_{\la+\xi}$ of highest weight $\la+\xi$ has Verma submodules of highest weights
$
\la+\xi-m_\al \al
$
for each $\al \in \Pi_\k$ \cite{DCK}.
\begin{definition}
The quotient of $\tilde M_{\la+\xi}$ by $\sum_{\al\in \Pi_\k}\tilde M_{\la+\xi-m_\al \al}$
is called generalized parabolic Verma module and denoted by $M_{\la,\xi}$.
\end{definition}

Note that if $\k$ is a Levi subalgebra in $\g$ relative to the fixed triangular decomposition,
then $M_{\la,\xi}$ is a parabolic Verma module induced from
$\C_\la\tp X_\xi$, where $X_\xi$ is the finite dimensional $U_q(\k)$-module of highest weight $\xi$
and $\C_\la$ is the one dimensional $U_q(\k)$-module of weight $\la$. The parabolic case is well studied,
and the  most interesting situation is when $\Pi_\k\not \subset \Pi_\g$.

In the special case of $\xi=0$, we denote $M_\la=M_{\la,0}$ and call it base module.
We expect
that $\Char M_\la$ equals the character of the polynomial algebra $\C[\g_+/\k_-]$ up to the factor $e^\la$.
Upon identification $\h^*\simeq \h$ via the inner product on $\h^*$, one can think of
$$
q^{2\la}=t q^{2\kappa-2\rho}\in T
$$
 as a quantization of the initial point $t$.

The set of eigenvalues  $\{\al(t)\}_{\al\in \Rm}\cup\{1\}$ of the operator $\Ad_t\in \End(\g)$ is an invariant of the class $O\in t$.
We call it spectrum of the class/point $t$. Introduce a notation 
\be
\sqrt[\Z]{t}=\{q\in \C^\times|\> q_\al^{m}=\al(t), \>\forall m\in \Z,\al\in \Rm^+\}.
\ee
This set comprises roots of units and roots of  $\bigl(\al(t)\bigr)^{\frac{2}{(\al,\al)}}$ for all $\al\in \Rm^+_{\g/\k}$.
Clearly $\sqrt[\Z]{t}$  depends only on the class of $t$.
It will play a role of the exceptional set for the deformation parameter where the properties of 
 modules participating in quantization of  the class $O\supset t$ may be violated.

\subsection{Extremal  projector}
In this section we recall the $q$-version of extremal projector,  \cite{AST,KT}, which is the key instrument for this study.
We start with the case of $\g=\s\l(2)$ and normalize the inner product so that  $(\al,\al)=2$ for its only positive root $\al$.
Set $e=e_\al$, $f=f_\al$, and $q^{h}=q^{h_\al}$ to be the standard generators of $U_q(\g)$.
Extend $U_q(\g)$ to $\hat U_q(\g)$ by including infinite sums of elements from $\C[f]\C[e]$ of same weights
with coefficients in the field of fractions $\C(q^{\pm h})$.
Similar extension works for general semi-simple $\g$ resulting in
an associative algebra $\hat U_q(\g)$, see e.g. \cite{KT}.

Define $p(s)$ as a rational trigonometric function of $s\in \C$ with values in $\hat U_q\bigl(\s\l(2)\bigl)$:
\be
\label{translationed_proj}
p(s)=\sum_{k=0}^\infty  f^k e^k \frac{(-1)^{k}q^{k(s-1)}}{[k]_q!\prod_{i=1}^{k}[h+s+i]_q}.
\ee
It is stable under the involution $\omega$.

For every module $V$ with locally nilpotent action of the generator $e$,  the element $p(s)$ delivers  a rational trigonometric
endomorphism of every weight space. On a module of highest weight
$\la$,
it acts by
\be
\label{proj_eigen}
p(s)v=c\prod_{k=1}^{l}\frac{[s-k]_q}{[s+\eta(h)+k]_q}v,
\ee
where $v$ is a vector of weight $\eta=\la-l\al$ and $c=q^{-l \eta(h)-l(l+1)}\not =0$.

Consider the truncated operator
$$
p_m(s)=\sum_{k=0}^m  f^k e^k \frac{(-1)^{k}q^{k(s-1)}}{[k]_q!\prod_{i=1}^{k}[h+s+i]_q}.
$$
\begin{lemma}
\label{cut_proj}
Suppose that $V$ is  a $U_q(\g)$-module and a weight subspace $V[\mu]$ is killed by $e^{m+1}$.
If
$$
[(\mu,\al)+i]_q\not =0, \quad \mbox{for}\quad i=0, \ldots, m+1,
$$
then $p_m(1)V[\mu]\subset\ker(e)$.
\end{lemma}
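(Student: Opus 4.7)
\emph{Proof plan.} The strategy is that the full (non-truncated) extremal projector $p(s)$ kills $e$ on the left, $e\cdot p(s)=0$, so that truncating the sum at degree $m$ leaves only a single boundary term proportional to $e^{m+1}$, which the hypothesis $e^{m+1}V[\mu]=0$ annihilates.

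First I would establish the commutation $e\,f^k = f^k e + [k]_q f^{k-1}[h-k+1]_q$ by induction on $k$, using $[e,f]=[h]_q$ together with $[h]_q f = f[h-2]_q$. Combined with $[h-k+1]_q e^k = e^k[h+k+1]_q$, this gives
\[
 e\cdot f^k e^k = f^k e^{k+1} + [k]_q f^{k-1} e^k [h+k+1]_q.
\]
Writing $p_m(s)=\sum_{k=0}^m f^k e^k a_k(s)$ with the scalar coefficients $a_k(s)$ of the lemma, applying the displayed identity termwise, and shifting the summation index in the second sum by $k\mapsto k+1$, I obtain the formal identity
\[
 e\cdot p_m(s) = \sum_{k=0}^{m-1} f^k e^{k+1}\bigl(a_k(s)+[k+1]_q[h+k+2]_q\,a_{k+1}(s)\bigr) + f^m e^{m+1}a_m(s)
\]
in $\hat U_q\bigl(\s\l(2)\bigr)$, after using that $[h+k+2]_q$ commutes with $a_{k+1}(s)$ since both are functions of $h$.

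Next, the definition of $a_k(s)$ gives the recursion $a_{k+1}(s) = -\dfrac{q^{s-1}}{[k+1]_q[h+s+k+1]_q}\,a_k(s)$, so each parenthesised factor above equals $a_k(s)\bigl(1-\dfrac{q^{s-1}[h+k+2]_q}{[h+s+k+1]_q}\bigr)$. At $s=1$ the two $q$-brackets in numerator and denominator coincide, every bracketed factor vanishes, and only the boundary term survives: $e\cdot p_m(1) = f^m e^{m+1} a_m(1)$.

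Finally, evaluating on $v\in V[\mu]$, the scalar $a_m(1)$ acts by $\dfrac{(-1)^m}{[m]_q!\,\prod_{i=1}^m [(\mu,\al)+1+i]_q}$, which is well defined by the non-vanishing hypothesis, while $e^{m+1}v=0$ annihilates the remaining factor; the same hypothesis also guarantees that $p_m(1)v$ itself is meaningful. The argument is algebraically elementary, so I do not foresee a substantial obstacle: the only point to watch is the bookkeeping of keeping each $a_k(s)$ to the right of $f^k e^k$ throughout, so that it is evaluated on $v$ (at $h=(\mu,\al)$) \emph{before} $e^{k+1}$ is applied, which is precisely what makes the cross terms cancel exactly at $s=1$ rather than up to a weight shift.
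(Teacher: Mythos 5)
Your proof is correct and takes essentially the same route as the paper's: both compute $e\cdot p_m(s)$ using the $U_q(\mathfrak{sl}(2))$ commutation relations and observe that at $s=1$ every term except the $f^m e^{m+1}$ boundary term cancels. The paper packages this as the single recursion $ep_m(s)=\tfrac{q^{-h-2}[s-1]_q}{[h+s-1]_q}\,p_{m-1}(s-1)\,e+f^m e^{m+1}a_m(s)$ (so the prefactor $[s-1]_q$ kills everything but the boundary at $s=1$), which is exactly what your termwise cancellation via the recursion for $a_k(s)$ reproduces.
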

\begin{proof}
The operator $p_m(s)$
satisfies the relation
\be
ep_m(s)=\frac{q^{-h-2}[s-1]_q}{[h+s-1]_q} p_{m-1}(s-1)e+  f^{m} e^{m+1}\frac{(-1)^{m}q^{m(s-1)}}{[m]_q!\prod_{i=1}^{m}[h+s+i]_q}
\label{proj_cut}
\ee
which implies the statement.
\end{proof}

For general $\g$ fix a normal order on $\Rm^+$ and consider an embedding $\iota_\al\colon \hat U_q\bigl(\s\l(2)\bigr)\to \hat U_q(\g)$
defined by the Lusztig pair of root vectors $f_\al, e_\al$, for every positive root $\al\in \Rm^+$.
Let  $p_\al(s)$ denote the image of $p(s)$
in $\hat U_q(\g)$ under $\iota_\al$.
Put  $\zt_i=2\frac{(\zt,\al^i)}{(\al^i,\al^i)}\in \C$ for $\zt\in \h^*$ and $\al^i\in \Rm^+$ and define
\be
p_\g(\zt)=p_{\al^1}(\rho_1+\zt_1)\cdots p_{\al^n}(\rho_n+\zt_n),\quad n=\#\Rm^+,
\label{factorization}
\ee
assuming the product ordered over increasing positive roots.
It is independent of the normal ordering and turns to the extremal projector $p_\g$  at $\zt=0$,
which  is the only element of zero weight
from  $1+\g_-\hat U_q(\g) \g_+$ satisfying
$$
p_\g^2=p_\g, \quad e_\al p_\g =0 =p_\g f_\al , \quad \forall \al \in \Pi.
$$
Uniqueness implies that $p_\g$ is $\omega$-invariant.
\begin{propn}
  For all $\zt\in \h^*$, the operator $p_\g(\zt)$ is $\omega$-invariant.
\label{shifted-proj-invar}
\end{propn}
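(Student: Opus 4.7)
The plan is to deduce the $\omega$-invariance of $p_\g(\zt)$ from three ingredients: (i) $\omega$-invariance of the $\sl(2)$-level factor $p(s)$, (ii) the anti-automorphism property of $\omega$, and (iii) the stated independence of $p_\g(\zt)$ from the choice of normal ordering on $\Rm^+$.

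First I would record that each factor $p_{\al^i}(\rho_i+\zt_i)$ appearing in the factorization (\ref{factorization}) is $\omega$-invariant. Indeed, $p(s)\in \hat U_q\bigl(\s\l(2)\bigr)$ is stable under $\omega$, as noted right after (\ref{translationed_proj}): in its expansion every monomial $f^k e^k$ is $\omega$-stable (since $\omega$ swaps $e$ and $f$ up to Cartan prefactors that combine with $f^ke^k$ into itself on each weight space) and the scalar coefficients $[k]_q!^{-1}\prod_i[h+s+i]_q^{-1}$ involve only $q^{\pm h}$. Transporting this along the embedding $\iota_{\al^i}\colon \hat U_q\bigl(\s\l(2)\bigr)\to\hat U_q(\g)$, which identifies the Lusztig subalgebra $U_q(\g^{\al^i})$ with $U_q\bigl(\s\l(2)\bigr)$ and is $\omega$-equivariant, each factor $p_{\al^i}(\rho_i+\zt_i)$ is fixed by $\omega$.

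Next, since $\omega$ is an algebra anti-automorphism, applying it to the product in (\ref{factorization}) simply reverses the order of factors:
\[
\omega\bigl(p_\g(\zt)\bigr)\;=\;p_{\al^n}(\rho_n+\zt_n)\,p_{\al^{n-1}}(\rho_{n-1}+\zt_{n-1})\cdots p_{\al^1}(\rho_1+\zt_1).
\]
The right-hand side is exactly the expression (\ref{factorization}) evaluated on the reversed enumeration $\al^n,\ldots,\al^1$ of $\Rm^+$. The defining condition of a normal ordering — that a sum of two positive roots is positioned between the summands — is symmetric under order reversal, so the reversed enumeration is again a normal ordering. Invoking the independence of $p_\g(\zt)$ from the choice of normal ordering, which is asserted in the paragraph preceding the proposition, the right-hand side coincides with $p_\g(\zt)$, and the invariance follows.

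The step that warrants the most care is the $\omega$-equivariance of $\iota_{\al}$ for non-simple $\al$: the defining formulas $\si(e_\al)=f_\al$, $\si(f_\al)=e_\al$, $\si(q^{h_\al})=q^{-h_\al}$ are stipulated only for simple $\al$, so one must verify that the Lusztig root vectors $e_\al,f_\al$ obtained by iterated braid-group action also satisfy $\omega(e_\al)\in\C^{\times}q^{?}f_\al$ etc., with the scalar and Cartan factors absorbed into the $U_q(\g^\al)\simeq U_q\bigl(\s\l(2)\bigr)$ identification so that $\iota_\al$ intertwines the two copies of $\omega$. This is a standard check from the compatibility of the braid operators with $\omega$; I would not work it out in detail but would cite the appropriate reference from \cite{ChP} or \cite{KT}.
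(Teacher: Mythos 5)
The paper proves this proposition by a very different route: it shows $\omega$-invariance of $p_\g(\zt)$ as an operator on an arbitrary finite-dimensional module $V$ for generic $\zt$, by identifying the matrix element $\bigl(p_\g(\zt)w,v\bigr)$ with $\bigl(p_\g(w\tp 1_Z),v\tp 1_Z\bigr)$ in $V\tp\tilde M_\zt$ (using \cite{M2}, Prop.~3.1), and then invoking the $\omega$-invariance of the \emph{unshifted} extremal projector $p_\g$, which follows from the uniqueness characterization together with self-adjointness with respect to the canonical contravariant form. This avoids the factorization (\ref{factorization}) entirely and, with it, any analysis of Lusztig root vectors or normal orderings.

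Your argument has a genuine gap at exactly the step you flag. The claim that each factor $p_{\al^i}(\rho_i+\zt_i)$ is individually $\omega$-invariant is false for compound $\al^i$: the Lusztig root vectors $e_\al,f_\al$ associated with a fixed normal order $\Nc$ are \emph{not} generally taken by $\omega$ to Cartan- and scalar-modified copies of $f_\al,e_\al$ from the \emph{same} ordering. Rather, $\omega(e_\al^{\Nc})$ is proportional to $q^{h_\al}f_\al^{\Nc^{\mathrm{rev}}}$, the root vector for the \emph{reversed} order, and already for $\s\l(3)$ one checks that $f_{\al+\bt}^{\Nc}$ and $f_{\al+\bt}^{\Nc^{\mathrm{rev}}}$ are not proportional (e.g.\ $-f_\bt f_\al+qf_\al f_\bt$ versus $-f_\al f_\bt+qf_\bt f_\al$). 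So $\omega$ sends $p_{\al}^{\Nc}(s)$ to $p_{\al}^{\Nc^{\mathrm{rev}}}(s)$, not to itself, and the "standard check" you hope to cite would fail. Your strategy can be rescued: applying the anti-automorphism $\omega$ to the product both reverses the enumeration \emph{and} replaces each factor by its $\Nc^{\mathrm{rev}}$-counterpart, so the result is precisely the factorization for $\Nc^{\mathrm{rev}}$ (with its own root vectors), and the normal-ordering independence of (\ref{factorization}) then finishes the argument. But as written, the proof asserts something false about the individual factors, and the disclaimer at the end underestimates the seriousness of the issue. The paper's contravariant-form argument sidesteps this delicate combinatorics and is the safer route.
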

\begin{proof}
  It is sufficient to prove that $p_\g(\zt)$ is $\omega$-invariant as an operator on every finite dimensional $U_q(\g)$-module $V$, for
  generic $\zt$. Choose $\zt$ such that for all $\mu\in \La(V)$ and all $\al\in \Rm^+_\g$, $(\zt+\mu,\al^\vee)\not \in -\N$.
  Let $Z$ be the Verma module of highest weight $\zt$.
  The projector $p_\g$ is well defined as a linear map from $V\tp 1_\zt$ to
  the space of $U_q(\g_+)$-invariants in $V\tp\tilde  M_\zt$. Then for all $v,w\in V$ the matrix
  element $\bigl(p_\g(\zt) w,v\bigr)$ equals
   $$
   \bigl(p_\g(\zt) w,v\bigr)=\bigl(p_\g (w\tp 1_Z),v\tp 1_Z\bigr)=\bigl(w\tp 1_Z,p_\g (v\tp 1_Z)\bigr)
   = \bigl(w,(p_\g(\zt)b\bigr), $$
  as required. The left and right equalities are due to \cite{M2}, Proposition 3.1. The middle equality employs $\omega$-invariance of the extremal projector.
\end{proof}

\section{Tensor product of highest weight modules}
A key issue arising in equivariant quantization of semi-simple conjugacy classes is semi-simplicity of  certain  tensor product modules.
        This exposition is utilizing  a complete reducibility criterion
for tensor products of irreducible modules of highest weight found in \cite{M1}. We also modify it
to milder restrictions, as a sufficient condition for the tensor product to be a sum of submodules of highest weight,  relaxing irreducibility of one tensor factor. Let us remind the finding of \cite{M1} first.

Recall that a module of highest weight $Z$ with highest vector $1_Z$ has a unique contravariant form such that
$( 1_Z,1_Z)=1$. The module is irreducible if and only if its contravariant form is non-degenerate.
We denote by $\eps_Z$ a linear map $Z\to \C$ acting by $\eps_Z(z)= (z,1_Z)$ for all $z\in Z$.

Tensor product of highest weight modules  $V\tp Z$ is equipped with a canonical contravariant form that is the product of contravariant forms
on the tensor factors.
Regard $Z$ as a cyclic $U_q(\g_-)$-module generated by the highest vector $1_Z$ and let $J^-\subset U_q(\g_-)$ denote
a $U_q(\h)$-graded finitely generated left ideal  lying in the annihilator of $1_Z$. If $J^-$ exhausts all of the annihilator,
then we have an isomorphism $Z\simeq U_q(\g_-)/J^-$. Similarly we introduce a
left ideal $J^+=\si(J^-)\subset U_q(\g_+)$. It kills the lowest vector in the  opposite module $Z'$ of lowest weight that is
 negative highest weight of $Z$.

Denote by $V^{J^+}\subset V$  the kernel of $J^+$.
It is the annihilator of  the vector space $\omega(J^+)V=\gm^{-1}(J^-)V$ (and {\em vice versa} thanks to finite dimensionality of weight subspaces) with respect to the contravariant form.
If $Z$ is irreducible, then $Z^*\simeq Z'$. In that case, if   $J^+$ is  the  annihilator of the lowest vector in $Z'$, then $V^{J^+}\simeq \Hom_{U_q(\g_+)}(Z^*,V)$.

Now suppose that  $Z$ is irreducible
and $J^-$ is  the {\em entire} annihilator of $1_Z$.
Let $(V\tp Z)^+$ denote the span of extremal vectors in $V\tp Z$ (the subspace of $U_q(\g_+)$-invariants).
 There is a linear isomorphism $\dt_V\colon V^{J^+}\to (V\tp Z)^+$  that
  is the inverse to $\id \tp \eps_Z$ restricted to $(V\tp Z)^+$.
 The pullback of the canonical form from $(V\tp Z)^+$ to
$V^{J^+}$  via $\dt_V$ defines a linear map $\theta\colon V^{J^+}\mapsto V/\omega(J^+)V$ (the space of coinvariants of the right ideal $\omega(J^+)$).
\begin{thm}[\cite{M1}]
\label{com_red_crit}
Let $V$ and $Z$ be irreducible $U_q(\g)$-modules of highest weight.
Then the following assertions are equivalent:
\begin{enumerate}
  \item[i)]  $V\tp Z$ is completely reducible,
  \item[ii)] $\theta$ is bijective,
  \item[iii)] all submodules of  highest weight in $V\tp Z$ are irreducible,
  \item[iv)] $V\tp Z$ is the sum of submodules of highest weight.
\end{enumerate}
\end{thm}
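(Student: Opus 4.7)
My plan reduces the four-way equivalence to a single non-degeneracy statement: that the canonical contravariant form on $V\otimes Z$ is non-degenerate when restricted to the subspace $(V\otimes Z)^+$ of extremal vectors. This is the content that $\theta$ is designed to encode, so (ii) will be the hinge of the argument.

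The first step is to verify the Gram-matrix identity
\[
\bigl(\delta_V(u),\delta_V(u')\bigr)_{V\otimes Z}\;=\;\langle u,\theta(u')\rangle_V,
\qquad u,u'\in V^{J^+},
\]
where $\langle\,,\,\rangle_V:V^{J^+}\times V/\omega(J^+)V\to\C$ is the non-degenerate pairing induced by the contravariant form on the irreducible $V$. The derivation writes $\delta_V(u)$ as the image $p_\g(u\otimes 1_Z)$ rescaled by its non-vanishing eigenvalue, read off from the product formula for $p_\g(\zeta)$ with $\zeta$ the highest weight of $Z$, and exploits $\omega$-invariance of $p_\g$ together with contravariance of the product form on $V\otimes Z$. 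This identity makes (ii) equivalent to non-degeneracy of the restricted form on $(V\otimes Z)^+$.

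The main equivalence (i)$\Leftrightarrow$(ii) then proceeds as follows. The ambient form on $V\otimes Z$ is automatically non-degenerate, being a tensor product of non-degenerate Shapovalov forms on the irreducibles. For (ii)$\Rightarrow$(i), choose an orthonormal basis $\{v_j\}$ of $(V\otimes Z)^+$ weight-by-weight. Each cyclic submodule $N_j=U_q(\g)v_j$ is a highest-weight module on which the restricted form is proportional to its Shapovalov form; any isotropic extremal vector inside $N_j$ would, through the identity above, produce an element of $V^{J^+}$ on which $\theta$ vanishes, contradicting (ii). Hence each $N_j$ is simple, the $N_j$ are mutually orthogonal by the choice of basis and by weight considerations, and the orthogonal complement of $N=\bigoplus_j N_j$ contains no extremal vector and therefore vanishes. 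The converse (i)$\Rightarrow$(ii) is obtained by exhibiting an orthonormal basis of highest-weight vectors from a direct-sum decomposition into simples.

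The implications (i)$\Rightarrow$(iv)$\Rightarrow$(iii)$\Rightarrow$(ii) close the cycle by routine arguments once (i)$\Leftrightarrow$(ii) is in place. (i)$\Rightarrow$(iv) is immediate. For (iv)$\Rightarrow$(iii), any highest-weight submodule $N$ pairs non-trivially by global non-degeneracy with some summand $N_i$, and contravariance realises $N$ as a simple quotient of $N_i$. For (iii)$\Rightarrow$(ii), a putative failure of non-degeneracy on $(V\otimes Z)^+$ yields an element of $V^{J^+}$ in the kernel of $\theta$, whose image under $\delta_V$ is an isotropic extremal vector generating a non-irreducible highest-weight submodule, contradicting (iii). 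The principal obstacle is the first step: extracting the renormalisation factor in $\delta_V$ from the product formula for $p_\g(\zeta)$ and coherently matching the three distinct pairings; once that identity is secured, the remainder is bookkeeping with contravariance and uniqueness of contravariant forms on highest-weight modules.
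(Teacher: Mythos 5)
The paper does not prove this theorem; it is cited from reference \cite{M1}, so your proposal must be judged on its own merits, and the machinery the paper develops for the related Proposition \ref{TP=sum} (a height-induction argument on $(V\tp Z)_k$ using Lemma 5.1 and Corollary 5.2 of \cite{M1}) is of a rather different flavour from yours. Two points need attention. First, your opening step misreads the definition: the ``Gram-matrix identity'' $\bigl(\dt_V(u),\dt_V(u')\bigr)=\langle u,\theta(u')\rangle$ is exactly \emph{how} $\theta$ is defined in this theorem (pullback of the canonical form via $\dt_V$); no derivation through the extremal projector is needed, and the route you sketch is in fact unavailable in this generality because $p_\g(\zt)$ can have poles at the highest weight $\zt$ of a general irreducible $Z$ that is a proper quotient of a reducible Verma module. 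The paper explicitly hedges against this in Lemma \ref{regularization_proj} and Proposition \ref{range of inverse twist}, which carry a regularity hypothesis on $p_\g$ that you cannot assume here.

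Second, and more seriously, your argument for (iii)$\Rightarrow$(ii) does not go through. You take $w\in (V\tp Z)^+$ with $(w,(V\tp Z)^+)=0$ and claim it generates a non-irreducible highest-weight submodule. What actually follows is only that the restriction of the canonical form to $N=U_q(\g)w$ is identically zero (being the $(w,w)$-multiple of the Shapovalov form on $N$, with $(w,w)=0$). But a contravariant form on a submodule can be the zero form even when the submodule is irreducible: the zero form says nothing about the intrinsic Shapovalov form of $N$. Indeed, a non-degenerate ambient form on $V\tp Z$ can pair two isomorphic simple submodules ``hyperbolically,'' each of which then carries the zero restricted form while remaining irreducible. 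So (iii) delivers no contradiction from the existence of such a $w$, and you need a genuinely different argument to close the cycle through (iii). By contrast, the (ii)$\Rightarrow$(i) step is essentially sound once you reorder it: you must first establish the mutual orthogonality of the cyclic modules $N_j$ (which does follow from orthogonality of the $v_j$'s and weight considerations alone) and only then deduce simplicity of each $N_j$ from the non-degeneracy on $(V\tp Z)^+$; as written, simplicity is invoked before the orthogonality that it relies on.
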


Relaxing the irreducibility  assumption on $Z$  we are looking for a sufficient condition
for $V\tp Z$ to be  a sum of submodules of highest weight. We would like to mimic the above criterion
in a situation when we do not know the annihilator of the highest vector of $Z$ but only a "part" of it.
The new input ingredient that compensates this deficit  of information  is the extremal projector and its relation with the extremal twist
  \cite{M2}.

For modules whose weights $\mu$  are in  $-\Gamma_++\nu$ for some $\nu \in \h^*$ (e.g. modules of highest weight and their
tensor products), we define height of $\mu$ as
the number of simple roots in   $\nu-\mu\in \Gamma_+$. Height of a weight vector is defined as the height of its weight.
If $V$ is equipped with a contravariant form, then extremal vectors of different heights and the modules they generate
are orthogonal to each other. For a module $V$ equipped with height function let $V_k$ denote its submodule generated by vectors of height $\leqslant k$.
It is known that $(V\tp Z)_k$ is  generated by tensors  of height $\leqslant k$ from $V\tp 1_Z$, \cite{M1}, Corollary 5.2.

We still assume that $V$ is irreducible but we do not require the left ideal $J^-\subset U_q(\g_-)$ be the entire annihilator of $1_Z$.
We  define $J^+=\si(J^-)\subset U_q(\g_+)$ as before.

Suppose that $V^{J^+}$ is in the range of $\id\tp \eps_Z$ restricted to $(V\tp Z)^+$ and define a
 $U_q(\h)$-affine (preserving weights up to a constant summand) section $\dt_V\colon V^{J^+}\to (V\tp Z)^+$ of  $\id \tp \eps_Z$.
Consider the pull-back to $V^{J^+}$ of the canonical form via the map $\dt_V$ and
define the extremal twist $\theta\colon V^{J^+}\to V/\omega(J^+)V$ via $\bigl(\theta(v),w\bigr)=\bigl(\dt_V(v),\dt_V(w)\bigr)$ for all $v,w\in V^{J^+}$,
as before. Clearly $\theta$ commutes with the action of $U_q(\h)$.

The map $\dt_V$  preserves   height because  it shifts weights by the highest weight  of $Z$.
\begin{propn}
  Suppose that the $\dt_V$-pullback of the canonical form is non-degenerate on $V^{J^+}$.
  Then $V\tp Z$ is a sum of submodules of highest weights whose highest vectors are from $\dt_V(V^{J^+})$.
\label{TP=sum}
\end{propn}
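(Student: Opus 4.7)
The plan is to imitate the argument for Theorem~\ref{com_red_crit} in \cite{M1}, weakened to match the relaxed hypothesis. Let $W \subset V \tp Z$ denote the $U_q(\g)$-submodule generated by $\dt_V(V^{J^+})$. Since $W$ is generated by extremal vectors, it is automatically a sum of highest weight submodules whose highest vectors lie in $\dt_V(V^{J^+})$, so it suffices to prove $W = V \tp Z$. I proceed by induction on the height filtration, showing $(V \tp Z)_k \subset W$ for every $k \geq 0$. The base case is immediate: $1_V \tp 1_Z = \dt_V(1_V)$, and hence $(V \tp Z)_0 = U_q(\g_-)(1_V \tp 1_Z) \subset W$.

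For the inductive step I assume $(V \tp Z)_{k-1} \subset W$ and consider an arbitrary extremal vector $u \in (V \tp Z)^+$ of height $k$. Set $v = (\id \tp \eps_Z)(u) \in V[\mu]$ with $\hth(\mu) = k$. The identity $\Delta(J^+)\cdot u = 0$ together with contravariance of the canonical form shows that $v$ lies in $V^{J^+}$ modulo the image of $(V \tp Z)_{k-1}$ under $\id \tp \eps_Z$. Subtracting the lift $\dt_V(v) \in W$ reduces the problem to studying the extremal vector $u' = u - \dt_V(v)$ at height $k$ with $(\id \tp \eps_Z)(u') = 0$. An explicit pairing computation, extending the one used to define the pullback form, then yields $(u',\dt_V(w))_{V \tp Z} = 0$ for every $w \in V^{J^+}$. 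Non-degeneracy of the pullback form forces $u' \in (V \tp Z)_{k-1} \subset W$: otherwise the class of $u'$ modulo the lower-height filtration would represent a nonzero extremal vector orthogonal to all of $\dt_V(V^{J^+})$, which contradicts the hypothesis. Hence $u \in W$, and the inductive step is complete.

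The main obstacle is the first reduction, locating $v = (\id \tp \eps_Z)(u)$ inside $V^{J^+}$ modulo the lower-height contribution. Because $J^-$ is only assumed to lie in the annihilator of $1_Z$, and need not exhaust it, the clean inclusion $(\id \tp \eps_Z)\bigl((V \tp Z)^+\bigr) \subset V^{J^+}$ available in the irreducible-$Z$ setting of Theorem~\ref{com_red_crit} may fail. One has to expand $\Delta(x)$ for $x \in J^+$ in Sweedler form, separate the leading contribution $x \cdot v$ from corrections whose $\eps_Z$-component already lies in $(\id \tp \eps_Z)\bigl((V \tp Z)_{k-1}\bigr)$, and verify that the deviation of $v$ from $V^{J^+}$ is absorbed into the height-$(k-1)$ filtration term. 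This bookkeeping, combined with Proposition~\ref{shifted-proj-invar} to control the action of the extremal projector on the relevant weight spaces, is the delicate technical input that replaces the simpler argument available in the irreducible case.
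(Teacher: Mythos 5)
The proposal has a genuine gap in the core induction. You seek to show that the submodule $W$ generated by $\dt_V(V^{J^+})$ exhausts $V\tp Z$ by proving $(V\tp Z)_k\subset W$ for all $k$. In the inductive step you only handle \emph{extremal} vectors of height $k$. But $(V\tp Z)_k$ is the submodule generated by \emph{all} weight vectors of height $\leqslant k$, and such a module is not, in general, generated by $(V\tp Z)_{k-1}$ together with genuine extremal vectors of height $k$. A weight vector of height $k$ is extremal only modulo $(V\tp Z)_{k-1}$; whether it lifts to a true extremal vector plus a lower-height element is precisely the issue at stake --- if every such lift existed, the module would already be known to be a sum of highest weight submodules, which is the conclusion you are trying to prove. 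So the inductive step, even if each of its internal claims were airtight, does not establish $(V\tp Z)_k\subset W$. (In a non-split extension sitting inside $V\tp Z$ there may be no extremal vector at the relevant height, so your step is vacuous precisely where the content should be.) The paper avoids this by using the fact, stated just before the proposition and quoted from \cite{M1}, Corollary 5.2, that $(V\tp Z)_k$ is generated by tensors $v\tp 1_Z$ with $\hth(v)\leqslant k$; it then decomposes such a $v$ of height $k+1$ using surjectivity of the extremal twist $\theta\colon V^{J^+}\to V/\omega(J^+)V$ (equivalent to your non-degeneracy hypothesis) into a part in the image of $\theta$ and a part in $\omega(J^+)V$, and treats the two parts separately --- the $\omega(J^+)V$-part being absorbed into $(V\tp Z)_k$ via \cite{M1}, Lemma 5.1.

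There are secondary problems as well. The reduction that places $v=(\id\tp\eps_Z)(u)$ in $V^{J^+}$ ``modulo the image of $(V\tp Z)_{k-1}$'' is not justified and, as stated, leaves $\dt_V(v)$ undefined when $v\notin V^{J^+}$; the subtraction $u'=u-\dt_V(v)$ therefore does not make sense as written. And even granting $u'$, orthogonality to $\dt_V(V^{J^+})$ does not by itself force $u'\in(V\tp Z)_{k-1}$: your hypothesis gives non-degeneracy of the pullback form on $V^{J^+}$, not non-degeneracy of the canonical form on the height-$k$ stratum of $V\tp Z$. You correctly identify that the irreducible-$Z$ mechanism of Theorem~\ref{com_red_crit} breaks down, but the repair you sketch does not close these gaps; the tensor-side argument of the paper is needed.
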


\begin{proof}
Denote by $V\boxtimes Z$ the sum of submodules generated by extremal vectors from $\dt_V(V^{J^+})$.
Clearly $(V\boxtimes Z)_k\subset (V\tp Z)_k$.
  The assertion will be proved if we demonstrate the reverse inclusion.

   Suppose we proved the required inclusion for $k\geqslant 0$ (it is obviously true for $k=0$).
   Pick up $v\in \omega(J^+)V$ of height $k+1$ and present it as $v=\sum_i \omega(e_i) v_i$, where $e_i\in J^+$ and $v_i\in V$ are some vectors of height $\leqslant k$.
  By Lemma 5.1 from \cite{M1}, $v\tp 1_Z=\sum_{i} v_i\tp \si(e_i)1_Z=0$  modulo $(V\tp Z)_k$, that is
  $v\tp 1_Z \in (V\boxtimes Z)_k$ by the induction assumption.

  Furthermore, if $v'\in V^{J^+}$ of height $k+1$, then there is $v\in V$ of height $k+1$ such that
  $\dt_V(v')=v\tp 1_Z$ modulo $(V\tp Z)_k$
   by  \cite{M1}, Lemma 5.1. The vector $\theta(v')$ is the projection of $v$ along $\omega(J^+)V$ because
   all $\dt_V(w')$ with $\hth(w')=k+1$ are orthogonal to
   extremal vectors of smaller heights and therefore to all $(V\tp Z)_k$ by the induction assumption:
   $$
   \bigl(\theta(v'),w'\bigr)=\bigl(\dt_V(v'),\dt_V(w')\bigr)=\bigl(v\tp 1_Z,\dt_V(w')\bigr)=(v,w').
   $$
   By the hypothesis, the map $\theta\colon V^{J^+}\to V/\omega(J^+)V$ is surjective (and preserves heights because it preserves weights). Then each tensor $v\tp 1_Z$ from $V\tp 1_Z$ of height $k+1$ can be presented
   as $\dt_V(v')$ modulo $(V\tp Z)_k$ plus a tensor from $\omega(J^+)V\tp 1_Z$ of height $k+1$, which is also in $(V\tp Z)_k$
   as already  proved. Therefore the tensor $v\tp 1_Z$ is in $(V\boxtimes Z)_{k+1}$ for all $v$ of height $k+1$,
   as required.
   This implies  $(V\boxtimes Z)_{k+1}\supset (V\tp Z)_{k+1}$. Induction on $k$ completes the proof.
  \end{proof}

We will construct the map  $\dt_V$  with the help of extremal projector provided it can be regularized on an appropriate subspace, cf. \cite{M2}.
Let $\widehat{V^{J^+}}\subset V$ denote the sum of weight subspaces in $V$ whose weights are in $\La(V^{J^+})$.
Let   $\zt$ denote the highest weight of $Z$.
\begin{lemma}
  Suppose that projector $p_\g$ is a regular map $\widehat{V^{J^+}}\tp 1_Z\to (V\tp Z)^+$. Then   $V^{J^+}$ contains the range $p_\g(\zt)\widehat{V^{J^+}}$,
  and the subspace $\widehat{V^{J^+}}\cap \omega(J^+)V$ is in its kernel.
 \label{regularization_proj}
\end{lemma}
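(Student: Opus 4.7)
The plan is to prove Part~2 (vanishing on $\widehat{V^{J^+}}\cap\omega(J^+)V$) directly and deduce Part~1 from it via self-adjointness of $p_\g(\zt)$. The key algebraic input is the identity $p_\g\omega(e)=0$ in $\hat U_q(\g)$ for every $e\in J^+$. Indeed, $p_\g\cdot U_q(\g_-)_+=0$ because $U_q(\g_-)_+$ is the left ideal of $U_q(\g_-)$ generated by $\{f_\al\}_{\al\in\Pi}$ and $p_\g f_\al=0$ is a defining property of the extremal projector; since $\omega=\sigma\gm$ maps the positive-weight part of $U_q(\g_+)$ into $U_q(\g_-)_+\cdot U_q(\h)$, one gets $p_\g\omega(J^+)=0$.

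For Part~2 I would exploit that $\omega$ preserves comultiplication, so $\Delta(p_\g)\Delta(\omega(e))=\Delta(p_\g\omega(e))=0$ holds as an operator identity on $V\tp\tilde M_\zt$. Applied to $v\tp 1_{\tilde M_\zt}$, the expansion of $\Delta(\omega(e))$ isolates the top piece $q^{(\wt(e),\zt)}\omega(e)v\tp 1_{\tilde M_\zt}$ (arising from $e\tp q^{h_{\wt(e)}}$ in $\Delta(e)$) alongside corrections $\omega(e_{(1)})v\tp\omega(e_{(2)})1_{\tilde M_\zt}$ of strictly lower $\tilde M_\zt$-weight. Projecting to $V\tp Z$ under the regularity hypothesis and extracting the $V\tp 1_Z$ component converts the leading term into $p_\g(\zt)(\omega(e)v)\tp 1_Z$. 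An induction on $\wt(e)$ kills the corrections: the base case $e=e_\al\in J^+$ is immediate since $\omega(e_\al)1_Z=-q^{(\al,\zt)}f_\al 1_Z=0$ in $Z$ (because $f_\al\in J^-$), while the inductive step reduces each correction either to the vanishing of $\omega(e_{(2)})1_Z$ in $Z$ via the left-ideal structure of $J^-$, or to Part~2 at a smaller-weight $e_{(1)}\in J^+$. This yields $p_\g(\zt)(\omega(e)v)=0$ whenever $\omega(e)v\in\widehat{V^{J^+}}$; summing over presentations $u=\sum_i\omega(e_i)v_i$ gives Part~2.

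Part~1 then follows formally. By Proposition~\ref{shifted-proj-invar}, $p_\g(\zt)$ is $\omega$-invariant and hence self-adjoint with respect to the $\omega$-contravariant form on $V$. Since $V^{J^+}$ and $\omega(J^+)V$ are weight-wise orthogonal complements, it suffices to verify $(p_\g(\zt)v,u)=0$ for each $v\in\widehat{V^{J^+}}$ and each $u\in\omega(J^+)V$ of weight $\wt(u)=\wt(v)\in\La(V^{J^+})$. Such a $u$ automatically lies in $\omega(J^+)V\cap\widehat{V^{J^+}}$, so Part~2 gives $p_\g(\zt)u=0$, whence $(p_\g(\zt)v,u)=(v,p_\g(\zt)u)=0$, placing $p_\g(\zt)v$ in $V^{J^+}$. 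The main obstacle is the inductive bookkeeping in Part~2: while $\Delta(p_\g\omega(e))=0$ holds cleanly in the completed algebra, isolating the $V\tp 1_Z$-component of each correction in $V\tp Z$ requires repeated use of the left-ideal structure of $J^-$ to annihilate the auxiliary vectors $\omega(e_{(2)})1_Z$ or to fall back to a lower-weight instance of the lemma.
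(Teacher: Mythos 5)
Your overall logical architecture is sound and mirrors the paper's: you recognize (correctly) that the two assertions of the lemma are linked by the self-adjointness of $p_\g(\zt)$ from Proposition~\ref{shifted-proj-invar}, so that either one follows from the other once the orthogonality $V^{J^+}\perp\omega(J^+)V$ is invoked. Your deduction of Part~1 from Part~2 is valid and is exactly the mirror image of the paper's deduction of Part~2 from Part~1. The paper, however, proves Part~1 directly (and trivially obtains Part~2 afterwards); you try to prove Part~2 directly, and that is where the proposal breaks down.

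Your direct argument for Part~2 rests on the algebraic identity $p_\g\,\omega(J^+)=0$ in $\hat U_q(\g)$. The identity itself is correct (as $\omega(e)\in U_q(\g_-)_+U_q(\h)$ for $e$ of positive weight and $p_\g f_\al=0$). But you then try to push $\Delta(p_\g\,\omega(e))=0$ through $V\tp\tilde M_\zt$ and project to $V\tp Z$, and the bookkeeping does not close. The corrections $\omega(e_{(1)})v\tp\omega(e_{(2)})1_Z$ do \emph{not} disappear, because $J^+$ is only a left ideal of $U_q(\g_+)$, not a coideal: the Sweedler components $e_{(1)}$ have no reason to lie in $J^+$, and $\omega(e_{(2)})$ has no reason to annihilate $1_Z$ (the annihilator of $1_Z$ is $J^-U_q(\h)$, and $\omega(e_{(2)})\in U_q(\b_-)$ is an arbitrary element of nonzero $f$-degree). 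Moreover, after you apply $\Delta(p_\g)$ to a correction $\omega(e_{(1)})v\tp\omega(e_{(2)})1_Z$, the result generally \emph{does} have a nonzero $V\tp 1_Z$ component---the extremal projector on a tensor product mixes the legs precisely because it is not group-like---so you cannot simply discard these terms by extracting the $V\tp 1_Z$ slice. Finally, the lemma's regularity hypothesis is about $p_\g$ on $\widehat{V^{J^+}}\tp 1_Z\subset V\tp Z$; it gives you no control over $p_\g$ acting on $V\tp\tilde M_\zt$ or on the lower weight spaces of $V\tp Z$ where your corrections live.

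Contrast this with the paper's proof of Part~1, which avoids every one of these difficulties: it tests $ep_\g(\zt)v$ against the form, lifts to $V\tp Z$, and uses the fact that $u:=p_\g(v\tp 1_Z)$ is an \emph{output} of $p_\g$, hence $\Delta(U_q(\g_+))$-invariant; this lets one replace $(e\tp 1)u$ by $(1\tp\gm^{-1}(e))u$ up to a scalar, and then $\si(e)1_Z=0$ (because $\si(J^+)=J^-$ is the defining left ideal) kills the pairing outright. No coideal structure on $J^+$ is needed. To repair your proposal you would essentially have to prove the paper's Part~1 first, at which point the direct attack on Part~2 becomes unnecessary.
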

\begin{proof}
 It is proved in \cite{M2}, Proposition 3.1, that the operator $p_\g(\zt)$ is
 well defined on $\widehat{V^{J^+}}$ and
 $p_\g(\zt)v= (\id\tp \eps_Z)\bigl(p_\g(v\tp 1_Z)\bigr)$ for all  $v\in \widehat{V^{J^+}}$.
Then for all  $w\in \widehat{V^{J^+}}$ and $e\in J^+$:
$$
\bigl(w,ep_\g(\zt)v\bigr)=\bigl(w\tp 1_Z,(e\tp 1) p_\g(v \tp 1_Z)\bigr)=\bigl(w\tp 1_Z,(1\tp  \gm^{-1}(e)) p_\g(v \tp 1_Z\bigr)=0,
$$
  because $\bigl(1_Z,\gm^{-1}(e) z\bigr)=\bigl(\si(e)1_Z,z\bigl)=0$ for all $z \in Z$. Therefore the range of $p_\g(\zt)$
restricted to $\widehat{V^{J^+}}$ is in $V^{J^+}$.
That $\widehat{V^{J^+}}\cap \omega(J^+)V$ is in $\ker p_\g(\zt)$ follows from $\omega$-invariance of $p_\g(\zt)$, cf. Proposition \ref{shifted-proj-invar}.
\end{proof}
Under the assumption of  Lemma \ref{regularization_proj}, we can think that $p_\g(\zt)$ is defined on entire $V$ taking zero value on $V[\mu]$ if $\mu \not \in \La(V^{J^+})$. Such weight subspaces are in $\omega(J^+)V$, and so defined operator is $\omega$-invariant.
The following result is analogous to Theorem 3.2 from \cite{M2}.
\begin{propn}
  Under the assumptions of Lemma \ref{regularization_proj}, suppose  that
  $p_\g(\zt)$ is surjective  onto $V^{J^+}$.   Then
there is a $U_q(\h)$-affine section $\dt_V$ of $\id \tp \eps_Z$ such that $\theta$ is invertible and
   its inverse $V/\omega(J^+)V\to V^{J^+}$ is the map induced by $p_\g(\zt)$.
\label{range of inverse twist}
\end{propn}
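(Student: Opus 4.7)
The plan is to build $\dt_V$ directly out of the extremal projector itself, so that $\theta$ is forced to be the map induced by $p_\g(\zt)$ essentially on the nose. Since $p_\g(\zt)\colon\widehat{V^{J^+}}\to V^{J^+}$ is $U_q(\h)$-equivariant and surjective by hypothesis, I would first choose a weight-preserving linear section $s\colon V^{J^+}\to \widehat{V^{J^+}}$ of it and set
$$
\dt_V(v):= p_\g\bigl(s(v)\tp 1_Z\bigr).
$$
This automatically lies in $(V\tp Z)^+$ because $p_\g$ takes values in $U_q(\g_+)$-invariants, and the identity $(\id\tp\eps_Z)\,p_\g(x\tp 1_Z)=p_\g(\zt)\,x$ from the proof of Lemma \ref{regularization_proj} yields $(\id\tp\eps_Z)\dt_V(v)=p_\g(\zt)s(v)=v$, so $\dt_V$ is indeed a $U_q(\h)$-affine section of $\id\tp\eps_Z$.

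The heart of the argument would be the evaluation of $\theta$. For $v,w\in V^{J^+}$, the $\omega$-invariance and idempotency of $p_\g$, combined with the product form $(x\tp 1_Z,y\tp z)=(x,y)\eps_Z(z)$, give
\begin{align*}
\bigl(\theta(v),w\bigr)
&=\bigl(p_\g(s(v)\tp 1_Z),\,p_\g(s(w)\tp 1_Z)\bigr)
=\bigl(s(v)\tp 1_Z,\,p_\g^{\,2}(s(w)\tp 1_Z)\bigr)\\
&=\bigl(s(v),\,(\id\tp\eps_Z)p_\g(s(w)\tp 1_Z)\bigr)
=\bigl(s(v),\,p_\g(\zt)s(w)\bigr)
=\bigl(s(v),w\bigr).
\end{align*}
Because $V^{J^+}$ is the orthogonal complement of $\omega(J^+)V$ under the contravariant form on $V$, this identifies $\theta(v)$ with the class $[s(v)]\in V/\omega(J^+)V$.

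After Lemma \ref{regularization_proj} it is noted that $p_\g(\zt)$, extended to all of $V$ by zero on weight subspaces outside $\La(V^{J^+})$, is $\omega$-invariant and vanishes on $\omega(J^+)V$; hence it descends to a map $\bar p\colon V/\omega(J^+)V\to V^{J^+}$. Applying $\bar p$ to the computation above yields $\bar p\bigl(\theta(v)\bigr)=p_\g(\zt)s(v)=v$, so $\bar p$ is a left inverse of $\theta$. To upgrade this to full invertibility I would use that $V$ is irreducible and therefore the weight-graded contravariant form is non-degenerate on each $V[\mu]$; consequently $\dim V^{J^+}[\mu]=\dim V[\mu]-\dim\omega(J^+)V[\mu]=\dim(V/\omega(J^+)V)[\mu]$, and the $U_q(\h)$-equivariant injection $\theta$ between weight spaces of equal finite dimension is a bijection with two-sided inverse $\bar p$.

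The step most likely to require care is the middle equality of the displayed calculation, which simultaneously invokes the $\omega$-invariance of the shifted projector from Proposition \ref{shifted-proj-invar}, the compatibility $p_\g(\zt)=(\id\tp\eps_Z)\circ p_\g\circ(\,\cdot\,\tp 1_Z)$ on $\widehat{V^{J^+}}$ from Lemma \ref{regularization_proj}, and the idempotency of $p_\g$ on $V\tp Z$; reconciling these while remembering that $p_\g$ is really the operator acting on $V\tp\tilde M_\zt$ pushed forward to $V\tp Z$ is the technical crux. The subsequent dimension match is the only place where irreducibility of $V$ is used in an essential way, and it is precisely what converts the left inverse $\bar p$ into a two-sided one.
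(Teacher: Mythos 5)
Your proof is correct and follows essentially the same route as the paper: same construction of $\dt_V$ from a $U_q(\h)$-linear section $s$ of $p_\g(\zt)$ composed with $v\mapsto p_\g(v\tp 1_Z)$, and the same key computation using $\omega$-invariance and idempotency of $p_\g$ together with $(\id\tp\eps_Z)\circ p_\g\circ(\cdot\tp 1_Z)=p_\g(\zt)$ to identify $\theta(v)$ with $[s(v)]$. The only cosmetic difference is the endgame: the paper shows $\theta\circ\bar p=\id$ on $V/\omega(J^+)V$ and deduces invertibility from surjectivity of $\bar p$, whereas you establish the other composition $\bar p\circ\theta=\id$ and close the argument with a dimension count on weight spaces (which is correct and in fact makes explicit the role of irreducibility of $V$ that the paper leaves implicit).
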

\begin{proof}
Define  $\dt_V$
as a composition $V^{J^+}\to V\to (V\tp Z)^+$,
where the left arrow is any $U_q(\h)$-linear section of the map $p_\g(\zt)\colon \widehat{V^{J^+}}\to V^{J^+}$, and the right
map is $v\mapsto  p_\g(v\tp 1_Z)$ for $v\in \widehat{V^{J^+}}$.
By \cite{M2}, Proposition 3.1,  it is indeed a section of $\id \tp \eps_Z$.

By definition of $\theta$, the matrix element  $\bigl(\dt\circ p_\g(\zt)(v), \dt\circ p_\g(\zt)(w)\bigr)$
equals  $\bigl (\theta \circ p_\g(\zt)(v),p_\g(\zt)(w) \bigr )$ for all $v,w\in V^{J^+}$.
On the other hand, it is equal to
$$
\bigl( p_\g(v\tp 1_Z), p_\g(w\tp 1_Z)\bigr )=\bigl( v\tp 1_Z, \omega(p_\g)\circ p_\g(w\tp 1_Z)\bigr )
=\Bigl( v,(\id \tp \eps_Z)\bigl( p_\g(w\tp 1_Z)\bigr) \Bigr) = \bigl( v, p_\g(\zt)(w) \bigr).
$$
Since the image of $p_\g(\zt)$ is $V^{J^+}$, one arrives at   $\theta\circ p_\g(\zt)=\id$ on $V/\omega(J^+)V$.
\end{proof}
\begin{corollary}
\label{corollary_irred}
  Suppose that assumptions of Proposition \ref{range of inverse twist} are fulfilled. Then
  $V\tp Z$ is a sum of submodules of highest weight. If  $Z$ is irreducible, then  $V\tp Z$ is completely reducible
  and $V^{J^+}=\Hom_{U_q(\g_+)}(Z',V)$.
\end{corollary}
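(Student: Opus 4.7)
The argument is essentially a packaging of the two propositions and the theorem just established, so the plan is to assemble them cleanly rather than do new computation. First I would observe that under the hypotheses of Proposition \ref{range of inverse twist} the extremal twist $\theta\colon V^{J^+}\to V/\omega(J^+)V$ is bijective, the inverse being induced by $p_\g(\zt)$. Bijectivity of $\theta$ immediately implies that the $\dt_V$-pullback of the canonical contravariant form on $(V\tp Z)^+$ is non-degenerate on $V^{J^+}$, since by definition $\bigl(\dt_V(v),\dt_V(w)\bigr)=\bigl(\theta(v),w\bigr)$ and the natural pairing between $V^{J^+}$ and $V/\omega(J^+)V$ is non-degenerate thanks to finite dimensionality of weight spaces. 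Proposition \ref{TP=sum} then applies verbatim and yields the first claim: $V\tp Z$ is a sum of submodules of highest weight, whose highest vectors lie in $\dt_V(V^{J^+})\subset (V\tp Z)^+$.

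Suppose now in addition that $Z$ is irreducible. Take $J^-$ to be the \emph{full} annihilator of $1_Z$ in $U_q(\g_-)$, as required by Theorem \ref{com_red_crit}. The first claim supplies assertion (iv) of that theorem, so the equivalence with assertion (i) gives complete reducibility of $V\tp Z$ for free. The identification $V^{J^+}=\Hom_{U_q(\g_+)}(Z',V)$ is the standard Frobenius-type argument flagged before Theorem \ref{com_red_crit}: since $Z$ is irreducible, $Z'\simeq Z^*$ is a cyclic $U_q(\g_+)$-module generated by its lowest vector $1_{Z'}$ with annihilator $J^+=\si(J^-)$, hence $Z'\simeq U_q(\g_+)/J^+$ as a left $U_q(\g_+)$-module. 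Every $U_q(\g_+)$-homomorphism $Z'\to V$ is therefore determined by the image of $1_{Z'}$, which must lie in the kernel $V^{J^+}$ of $J^+$, and conversely each vector in $V^{J^+}$ extends uniquely to such a homomorphism.

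I do not foresee a real obstacle. The only bookkeeping item worth flagging is the choice of $J^-$: the earlier results in the section do not demand $J^-$ to be the full annihilator of $1_Z$, whereas Theorem \ref{com_red_crit} and the Frobenius identification do. In the situation of the corollary, with $Z$ irreducible, it is natural and consistent to enforce this maximal choice, after which both the complete reducibility statement and the Hom-identification line up with the already-established machinery.
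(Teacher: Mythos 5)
Your overall plan is the same as the paper's: the first assertion is Proposition \ref{range of inverse twist} fed into Proposition \ref{TP=sum}, and the second is delegated to Theorem \ref{com_red_crit}. The one place you flag but do not quite close is the Hom-identification. The corollary asserts $V^{J^+}=\Hom_{U_q(\g_+)}(Z',V)$ for the $J^+$ appearing in its \emph{own} hypotheses, which may a priori be strictly smaller than the full annihilator of the lowest vector of $Z'$; this is the whole point of the statement, since for the full annihilator the identity is just the Frobenius observation already recorded before Theorem \ref{com_red_crit}. Silently ``enforcing the maximal choice'' of $J^-$ changes $V^{J^+}$ and therefore does not by itself give the stated equality; what must actually be shown is that the surjectivity hypothesis on $p_\g(\zt)$ forces $V^{J^+}$ to be as small as it can be. Either of two short remarks fills the gap. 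One is a dimension count: $\dt_V$ is injective because it is a section of $\id\tp\eps_Z$, so $\dim V^{J^+}[\mu]\leqslant\dim (V\tp Z)^+[\mu]=\dim\Hom_{U_q(\g_+)}(Z',V)[\mu]$ once $Z$ is irreducible (and $V\tp Z$ completely reducible), while the reverse inclusion $\Hom_{U_q(\g_+)}(Z',V)\subset V^{J^+}$ is immediate since $J^+$ sits inside the full annihilator. Alternatively, the computation in the proof of Lemma \ref{regularization_proj} only uses that $\si(e)$ kills $1_Z$, so it applies verbatim to every $e$ in the full annihilator and shows that the range of $p_\g(\zt)$ already lands in $\Hom_{U_q(\g_+)}(Z',V)$; surjectivity onto $V^{J^+}$ then gives $V^{J^+}\subset\Hom_{U_q(\g_+)}(Z',V)$. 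With either remark supplied, your argument is complete and runs on the same rails as the paper's very terse proof.
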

\begin{proof}
  The map $\dt_V$ constructed from the extremal projector and the operator  $\theta$ it defines fulfil conditions of Proposition \ref{TP=sum}, hence the
  first part of the statement. The second assertion holds by virtue of Theorem \ref{com_red_crit}.
\end{proof}
Remark that the shifted extremal projector was considered as a form on coinvariants in the special case of Verma modules in \cite{KN}.
 Corollary \ref{corollary_irred} can be viewed as a generalization of  Proposition 2.3 in \cite{KN}.

We denote by $p^{-1}_\g(\zt)$   an arbitrary $U_q(\h)$-linear section of the map $p_\g(\zt)\colon \widehat{V^{J^+}}\to V^{J^+}$.
Note that, for irreducible  $Z$, extremal vectors in $(V\tp Z)^+$ can be alternatively
constructed  via the extremal projector or via a lift $\Sc\in U_q(\g_+)\tp U_q(\g_-)$ of the inverse invariant pairing $Z\tp Z'\to \C$,
see the next section.
The relation is given by the formula
\be
\Sc(v\tp 1_Z)=p_\g\bigl(p^{-1}_\g(\zt)v\tp 1_Z\bigr)
\label{Shap_proj}
\ee
for each weight vector $v\in V^{J^+}$, \cite{M2}. We will use this relation for construction of star product on
 conjugacy classes in Section \ref{Sec_star_prod}.

It is easy to calculate $\theta_{V,Z}$ if the weights of  $V$ are multiplicity free
Suppose that  $\zt$ is a highest weight of module $Z$. Then
$\theta_{V,Z}$ acts on  $V[\mu]$ as multiplication by a scalar, that is proportional to
$
 \prod_{\al\in \Rm^+}\theta^\al_\mu
$, where
\be
\theta^\al_\mu=\prod_{k=1}^{l_{\mu,\al}}\frac{[(\zt+\rho+\mu,\al^\vee)+k]_{q_\al}}{[(\zt+\rho,\al^\vee)-k]_{q_\al}}.
\label{theta igenvalues}
\ee
Here $l_{\mu,\al}$ stands for the maximal integer $k$ such that $e_{\al}^kV^+[\mu]\not =\{0\}$.

\section{Quasi-classical limit of Shapovalov elements}
\label{Sec_Sing_v_parab}

Recall that a generalized parabolic Verma module $M_{\la,\xi}$, where $\la$ is a base weight for $t\in T$ and
 $\xi\in \La^+_\k$ is a dominant weight for its centralizer subalgebra $\k$, is defined as a quotient of the Verma module
 $\tilde M_{\la+\xi}$.
Consider the extremal vector (defined up to a scalar factor) in $\tilde M_{\la+\xi}$ of weight $\la+\xi-m_\al \al$,
where $m_\al=(\xi,\al^\vee)+1$. Let $\phi_{m_\al \al}\in U_q(\g_-)$ denote its lift under the linear isomorphism
$U_q(\g_-)\to \tilde M_{\la+\xi}$. It is called Shapovalov element corresponding to the positive root $\al$ and a positive integer $m_\al$.
With fixed $\la$ and $\xi$, it  is a rational $U_q(\g_-)$-valued function of $q$, cf. a remark after (\ref{base weight}). Since $\phi_{m_\al \al}$ is defined up to a scalar multiple,
we assume that it is regular in a neighbourhood of $1$ and does not vanish at $q=1$.

A key assumption about the initial point $t\in T$ that facilitates our approach to quantization is that it features certain behaviour in the classical limit  in the
following sense.
\begin{definition}
\label{regular ip}
We call the point $t\in T$ quantizable if
 $\lim_{q\to 1}\phi_{m \al}=f_\al^{m}$ for all $m\in \N$ and all $\al\in \Pi_{\k}$,
where $f_\al\in \k_-$ is the classical root vector of root $-\al$.  
\end{definition}
 Let $\l$ denote the maximal Lie subalgebra in $\k$ that is Levi in $\g$, so that
$\Pi_\l=\Pi_\k\cap \Pi_\g$. It is clear that if $\Pi_\k=\Pi_\l$ then $M_{\la,\xi}$ is just the parabolic
Verma module, and the point $t$ is quantizable. Thus this property is questionable  only when the set $\Pi_{\k/\l}=\Pi_\k\backslash \Pi_\l$ is not
empty.

We conjecture that
all $t$ are quantizable for all simple $G$ and prove  that for non-exceptional $G$ in this section.
We do it by a direct analysis of Shapovalov elements using their explicit construction from the inverse Shapovalov form
(as certain rescaled Shapovalov matrix elements).

\subsection{Inverse Shapovalov form and its matrix elements}
In this section we give an explicit construction of Shapovalov elements relative to generalized parabolic Verma modules,
following \cite{M8,M6}.

For each weight $\mu\in \Gamma_+$ put
\be
\eta_{\mu}=h_\mu+(\mu, \rho)-\frac{1}{2}(\mu,\mu) \in \h\op \C.
\ee
Regard it  as an affine function on $\h^*$ by  the assignment $\eta_\mu\colon \zt \mapsto (\mu,\zt+ \rho)-\frac{1}{2}(\mu,\mu)$, $\zt\in \h^*$.

Let $\{h_i\}_{i=1}^{\rk \g}\in \h$ be an orthonormal basis. The element $q^{\sum_i h_i\tp h_i}$ belongs to a completion of $U_q(\h)\tp U_q(\h)$
in the $\hbar =\ln q$-adic topology. Choose an R-matrix of $U_q(\g)$ such that $\check \Ru=q^{-\sum_i h_i\tp h_i}\Ru\in U_q(\g_+)\tp U_q(\g_-)$
and set $\Cc=\frac{1}{q-q^{-1}}(\check \Ru- 1\tp 1)$. Sending the left tensor leg of $\Cc$ to a representation produces a matrix $C$ with entries in $U_q(\g_-)$ which will be used
for construction of Shapovalov elements.

Note that, in the classical limit, the tensor $\Cc$ tends to  $\sum_{\al\in \Rm^+}e_\al\tp f_\al$, where
$e_\al$, $f_\al$ are classical root vectors normalized to $(e_\al,f_\al)=1$ by the ad-invariant form on $\g$.
This fact will be used in the proof of Proposition \ref{Xc_root} below.

Let $\tilde M_\zt$ be an irreducible  Verma module of highest weight $\zt$
and $\Sc\in U_q(\g_+)\hat \tp U_q(\g_-)$ the lifted  inverse of the invariant pairing
$\tilde M_\zt\tp \tilde M_\zt'\to \C$. Pick up $V\in \Fin_q(\g)$
and denote by $S$ the image of $\Sc$  in $\End(V)\tp U_q(\g_-)$.

 The module $\tilde M_\zt$ becomes reducible at certain $\zt$, which results in poles of $\Sc$ (they may not appear in
 $S$ for a particular $V$).
So we can relax the assumption that $\tilde M_\zt$ is irreducible and work with $S$ independently
regarding its entries as rational trigonometric $U_q(\g_-)$-valued functions of $\zt$.

Every pair of vectors $v,w\in V$ define a matrix element  $(v,S_1w)S_2\in \hat U_q(\g_-)$ (Sweedler notation for $S$) with respect to the contravariant form on $V$.
An explicit expression for the matrix entries $s_{ij}=(v_i,S_1v_j)S_2$ of $S$ in an orthonormal weight basis $\{v_i\}_{i\in I}\subset V$
can be formulated using  the language of Hasse diagrams associated with partially ordered sets.
We introduce such an order on $\{v_i\}_{i\in I}$ (equivalently on $I$) by writing $v_i\succ v_j$ if $\nu_i-\nu_j\in \Gamma_+\backslash\{0\}$,
$l=1,\ldots, k$.
The matrix $S$ is triangular: $s_{ii}=1$ and $ s _{ij}=0$ if $\nu_i\not \succeq \nu_j$. The entry $s_{ij}$
is a rational trigonometric function $\h^*\to U_q(\g_-)$, its value carries weight $\nu_j-\nu_i\in -\Gamma_+$.

Set $\check s_{ab}= -(v_b,S_1v_a) S_2[\eta_{\nu_b-\nu_a}]_qq^{-\eta_{\nu_b-\nu_a}}\in U_q(\b_-)$ for each $v_b\succ v_a$.
An explicit formula for $\check s_{ab}$ in terms of the image $C=\sum_{ij}e_{ij}\tp c_{ij}\in \End(V)\tp U_q(\g_-)$ of the element $\mathcal{C}$  can be extracted from    \cite{M6}:
\be
\check s_{ab}= c_{ba}+\sum_{k\geqslant 1}\sum_{v_b \succ v_k\succ \ldots \succ v_1\succ v_a}
c_{b k}\ldots
c_{1 a}\frac{(-1)^kq^{\eta_{\mu_k}}\ldots q^{\eta_{\mu_1}}}{[\eta_{\mu_k}]_q\ldots [\eta_{\mu_1}]_q},
\label{norm_sing_vec}
\ee
where
$
 \mu_l=\nu_{l}-\nu_{a}\in \Gamma_+$, $l=1,\ldots, k.
$
One can see that every node $v_l$ between $v_b$ and $v_a$ contributes  $-\frac{q^{\eta_{\mu_l}}}{[\eta_{\mu_l}]_q}\in \hat U_q(\h)$
to the products
which we call node factor. These node factors may produce singularities when evaluated at a particular weight.

Let $\bt\in \Pi$ be a composite positive root and $\Pi_\bt\subset \Pi$ be the set of simple roots entering
the expansion of $\bt$ over the basis $\Pi$ with  positive coefficients.
Recall that a simple Lie subalgebra, $\g(\bt)\subset \g$, generated by
$e_\al,f_\al$ with $\al\in \Pi_\bt$ is called  support of $\bt$. Its universal enveloping algebra is quantized as a Hopf subalgebra in $U_q(\g)$.
 \begin{definition}
  Let $V$ be a finite dimensional $U_q(\g)$-module and $v_a,v_b\in V$ a pair of vectors of weights $\nu_a,\nu_b$, respectively.
  We call a triple $(V,v_a,v_b)$ admissible $\bt$-representation if $e_\al v_b=0$ for all $\al\in \Pi_\bt$, $v_a=f_\bt v_b$, and  $(\bt^\vee,\nu_b)=1$.
\end{definition}
In other words, a triple $(V,v_a,v_b)$ is admissible if the vector  $v_b\in V$ is extremal for $U_q\bigl(\g(\bt)\bigr)$ and generates a submodule $\simeq \C^2$  of the  subalgebra  $U_q(\g^\bt)$.
Assuming a triple  $(V,v_a,v_b)$ admissible $\bt$-representation we  put $\phi_\bt(\zt)=\check s_{ba}(\zt)$.

\begin{propn}[\cite{M8}]
\label{Shap_el}
Suppose that $(V,v_a,v_b)$ is an admissible  $\bt$-representation.
For  $\zt\in \h^*$ and $m\in \N$
set $\zt_0=\zt$,
$
\zt_k=\zt_{k-1}+\nu_a$, $k=1,\ldots, m$.
 Then the product
\be
\phi_{m\bt}(\zt)=  \phi_\bt(\zt_{m-1})\>\ldots \> \phi_\bt(\zt_0)\> \in  U_q(\g_-)
\label{factor-root-degree}
\ee
is a Shapovalov element for generic $\zt$ satisfying  $q^{2(\zt+\rho,\bt)}=q^{m(\bt,\bt)}$.
\end{propn}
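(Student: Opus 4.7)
The plan is to prove the proposition by induction on $m$, interpreting each factor $\phi_\bt(\zt_k)$ via the tensor-product realization of the inverse Shapovalov form. The base case $m=1$ holds by construction: $\phi_\bt(\zt)=\check s_{ba}(\zt)$ is the defining level-one Shapovalov element for an admissible triple $(V,v_a,v_b)$, established via $\Sc$ in \cite{M8,M6}.

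For the inductive step I would first use the factorization
\[
\phi_{m\bt}(\zt) \;=\; \phi_{(m-1)\bt}(\zt+\nu_a)\,\phi_\bt(\zt),
\]
obtained by the reindexing $\tilde\zt_k=\zt_{k+1}$ applied to the leftmost $m-1$ factors of the product. The shift convention $\zt_k=\zt+k\nu_a$ is tailored so that $(\zt_k+\rho,\bt^\vee)=m-k$, using $(\nu_a,\bt^\vee)=-1$ (a direct consequence of $\nu_a=\nu_b-\bt$ together with the admissibility condition $(\bt^\vee,\nu_b)=1$). In particular, $\zt+\nu_a$ satisfies the level-$(m-1)$ Kac--Kazhdan condition exactly when $\zt$ satisfies the level-$m$ one, so by the induction hypothesis $\phi_{(m-1)\bt}(\zt+\nu_a)\cdot 1_{\zt+\nu_a}$ is extremal in $\tilde M_{\zt+\nu_a}$ of weight $\zt+\nu_a-(m-1)\bt$.

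To conclude extremality of $\phi_{m\bt}(\zt)\cdot 1_\zt$ in $\tilde M_\zt$ I would verify $e_\al$-annihilation for every $\al\in\Pi_\g$ in two cases. For $\al\notin\Pi_\bt$ the vanishing is automatic on weight grounds: $m\bt-\al$ has $\al$-coefficient $-1<0$ since $\al$ is not in the support of $\bt$, so $m\bt-\al\notin\Gamma_+$ and the weight $\zt-m\bt+\al$ is simply not present in $\tilde M_\zt$. For $\al\in\Pi_\bt$ I would pass through $V\tp\tilde M_\zt$: the inverse Shapovalov element $\Sc$ produces an embedding $\iota\colon\tilde M_{\zt+\nu_a}\hookrightarrow V\tp\tilde M_\zt$ whose image of $1_{\zt+\nu_a}$ is an extremal vector $u$ whose $v_b$-component is $v_b\tp\phi_\bt(\zt)\cdot 1_\zt$ up to a nonzero scalar. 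Transferring the inductive extremality through $\iota$, the vector $\phi_{(m-1)\bt}(\zt+\nu_a)\cdot u$ is extremal in $V\tp\tilde M_\zt$; expanding via the coproduct, its $v_b$-component is $v_b\tp\phi_{m\bt}(\zt)\cdot 1_\zt$. Because admissibility gives $e_\al v_b=0$ for $\al\in\Pi_\bt$, the operator $e_\al$ acts on the $v_b$-row as $q^{(\nu_b,\al)}(1\tp e_\al)$, and projecting the extremal condition $e_\al(\phi_{(m-1)\bt}(\zt+\nu_a)\cdot u)=0$ onto this row yields $e_\al\phi_{m\bt}(\zt)\cdot 1_\zt=0$.

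The main obstacle is controlling the coproduct bookkeeping: one must show that, for $\al\in\Pi_\bt$, no contribution to the $v_b$-row leaks in from non-$v_b$ tensors $v_i\tp(\cdots)$ in the expansion of $\phi_{(m-1)\bt}(\zt+\nu_a)\cdot u$. This amounts to ruling out $v_i\in V$ with $\nu_i=\nu_b-\al$ such that $e_\al v_i$ has a nonzero $v_b$-component, and relies on the $U_q(\g(\bt))$-module structure of $V$ together with admissibility, which pins down $v_b$ as the unique highest-weight vector of its weight in the $U_q(\g(\bt))$-orbit. The explicit matrix-element formula \eqref{norm_sing_vec} for $\check s_{ab}(\zt)$, combined with the compatibility of $\Sc$ with the triangular weight order, is what ultimately enforces the clean identification in the $v_b$-row and closes the induction.
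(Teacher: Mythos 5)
The paper cites this proposition from \cite{M8} and gives no proof of its own, so there is no internal argument to compare against; I can only assess your outline on its merits. The reindexing $\phi_{m\bt}(\zt)=\phi_{(m-1)\bt}(\zt+\nu_a)\phi_\bt(\zt)$, the computation $(\nu_a,\bt^\vee)=-1$, and the weight-based annihilation for $\al\notin\Pi_\bt$ are all fine. The gap is in the case $\al\in\Pi_\bt$. You claim that admissibility rules out vectors $v_i$ of weight $\nu_b-\al$ with $e_\al v_i$ having a nonzero $v_b$-component, so that nothing leaks into the $v_b$-row. That is false: admissibility constrains only $v_b$ itself (it gives $e_\al v_b=0$ and $(\bt^\vee,\nu_b)=1$) and says nothing about $e_\al v_i$. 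Concretely, for $\g=\s\l(n)$, $V$ the natural module, $\bt=\ve_i-\ve_j$ with $j>i+1$, $v_b=v_{\ve_i}$, and $\al=\ve_i-\ve_{i+1}\in\Pi_\bt$, the vector $v_{\ve_{i+1}}$ has weight $\nu_b-\al$ and $e_\al v_{\ve_{i+1}}=v_b$. Thus the $v_b$-component of $e_\al\bigl(\phi_{(m-1)\bt}(\zt+\nu_a)\cdot u\bigr)$ is not $e_\al X_b$ alone but $e_\al X_b + q^{(\mu_c,\al)}X_c$ for the adjacent node $v_c$, and the extremality of $\phi_{(m-1)\bt}(\zt+\nu_a)\cdot u$ yields only $e_\al X_b=-q^{(\mu_c,\al)}X_c$, not $e_\al X_b=0$.

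This is precisely the substantive content of the proposition, not bookkeeping to be absorbed into ``compatibility of $\Sc$ with the triangular order.'' The paper itself records, citing \cite{M8}, that
\[
e_\al\,\phi_{m\bt}(\zt)1_\zt\ \propto\ \dt_{\nu_b-\nu_c,\al}\,\bigl(q^{2(\zt+\rho,\bt)}-q^{m(\bt,\bt)}\bigr)\,\psi(\zt)\,1_\zt ,
\]
i.e.\ $e_\al\phi_{m\bt}(\zt)1_\zt$ is generically \emph{nonzero} exactly for the $\al$ adjacent to $v_b$, and vanishes only because of the Kac--Kazhdan factor, not by an automatic cancellation in the $v_b$-row. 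Closing your induction would therefore require showing that $X_c$ carries the same $\zt$-dependent factor, which must be extracted from the explicit recursion (\ref{norm_sing_vec}) or an equivalent identity; your outline does not supply this. A secondary, related concern: you invoke $\iota\colon\tilde M_{\zt+\nu_a}\hookrightarrow V\tp\tilde M_\zt$ via $\Sc$ ``for generic $\zt$,'' but the extremality has to be evaluated on the Kac--Kazhdan hyperplane, where $\tilde M_\zt$ is reducible and $\Sc$ may be singular; some regularization or a passage-to-limit argument is needed and is not addressed.
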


Next we point out  admissible representations for all composite roots of non-exceptional Lie algebras.
Their simple roots are written below in terms of an orthonormal system $\{\ve_k\}_{k=1}^n$:
$$
\Pi_{\s\l(n)}=\{\ve_1-\ve_2,\> \ldots,\>\ve_{n-1}-\ve_{n}\},
\quad
\Pi_{\s\p(2n)}=\{\ve_1-\ve_2,\>\ldots, \>\ve_{n-1}-\ve_{n},\>2\ve_n\},
$$
$$
\Pi_{\s\o(2n+1)}=\{\ve_1-\ve_2,\>\ldots, \>\ve_{n-1}-\ve_{n},\>\ve_n\},
\quad
\Pi_{\s\o(2n)}=\{\ve_1-\ve_2,\>\ldots, \>\ve_{n-1}-\ve_{n},\>\ve_{n-1}+\ve_{n}\}.
$$
We will enumerate them from left to right.
\begin{propn}
\label{adm_rep}
  For each composite root $\bt\in \Rm^+_\g$   there is an admissible $\bt$-representation.
\end{propn}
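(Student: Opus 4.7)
The plan is an explicit case-by-case construction. For each compound root $\bt$ of each classical type, I exhibit a triple $(V,v_a,v_b)$ and verify the three defining conditions: (i) $e_\al v_b=0$ for every $\al\in \Pi_\bt$, (ii) $v_a=f_\bt v_b\neq 0$, and (iii) $(\bt^\vee,\nu_b)=1$. Observe that once (i) and (iii) hold, $v_b$ is extremal for $U_q\bigl(\g(\bt)\bigr)$ and of $U_q(\g^\bt)$-weight one, so it generates a two-dimensional $U_q(\g^\bt)$-submodule, making (ii) automatic. The real work lies in verifying (i) and (iii).

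The primary tool is the defining (vector) representation $V$ of $\g$, with weight basis $\{v_k,v_{-k}\}_k$ (plus $v_0$ in type $B_n$), together with the uniform choice $v_b=v_i$ for an appropriate index entering $\bt$. Concretely, for $\bt=\ve_i-\ve_j$ I take $v_b=v_i$, $v_a=v_j$; for $\bt=\ve_i+\ve_j$ in types $B_n$, $C_n$, $D_n$, I take $v_b=v_i$, $v_a=v_{-j}$; and for the long root $\bt=2\ve_i$ in type $C_n$ with $i<n$, I take $v_b=v_i$, $v_a=v_{-i}$. In each case (iii) is direct: $\bt^\vee=\bt$ for the short roots with $(\bt,\bt)=2$ giving $(\bt,\ve_i)=1$, and $\bt^\vee=\ve_i$ for $\bt=2\ve_i$ giving $(\ve_i,\ve_i)=1$. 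For (i), the support $\Pi_\bt$ is always contained in $\{\ve_k-\ve_{k+1}:k\geqslant i\}$ together with, when the expansion of $\bt$ traverses the short or branching end, the corresponding terminal simple root ($2\ve_n$ in $C_n$, $\ve_n$ in $B_n$, $\ve_{n-1}+\ve_n$ in $D_n$); for every such $\al$ the shifted weight $\ve_i+\al$ is not a weight of the vector representation, so $e_\al v_i=0$.

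The only case this does not cover is $\bt=\ve_i$ in $\s\o(2n+1)$ with $i<n$: here $\bt^\vee=2\ve_i$ forces the $\ve_i$-component of $\nu_b$ to equal $\tfrac12$, which no weight of the vector representation achieves. I instead take $V$ to be the spin representation of highest weight $\omega_n=\tfrac12(\ve_1+\cdots+\ve_n)$ and $v_b$ the highest weight vector. Condition (i) is automatic, (iii) follows from $(2\ve_i,\omega_n)=1$, and the weight $\omega_n-\ve_i$ (flip the $i$-th sign) is a weight of $V$, so $v_a=f_\bt v_b\neq 0$.

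The only genuine technicality is tracking the simple-root expansion of $\bt$ near the non-simply-laced end of $B_n$, $C_n$ and the branch of $D_n$ to determine when a terminal simple root enters $\Pi_\bt$; once the support is correctly identified, every verification reduces to a direct inspection of the standard weight diagram of the vector or spin representation.
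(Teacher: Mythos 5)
Your construction coincides with the paper's: for every compound root you choose the vector representation with $v_b=v_i$ (and $v_a=v_j$, $v_{-j}$, or $v_{-i}$ as appropriate), except for the short roots $\ve_i$ of $\s\o(2n+1)$ where you switch to the spin module with $v_b$ the highest weight vector — exactly the triples exhibited in the paper's proof. Your additional verification that (i) and (iii) force (ii), and the explicit tracking of $\Pi_\bt$ against the weight diagram, are correct but simply spell out details the paper leaves to the reader.
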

\begin{proof}
In all cases except for short roots of $\s\o(2n+1)$ we take for $V$ the natural $\g$-module of minimal dimension. Then

$\g=\s\l(n)$: $v_b=v_{\ve_i}$, $v_a=v_{\ve_j}$ $\bt=\ve_{\ve_i}-\ve_{\ve_j}$, $i<j$.

$\g=\s\p(2n), \s\o(2n),\s\o(2n+1)$: $v_b=v_{\ve_i}$, $v_a=v_{\pm\ve_j}$ for $\bt=(\ve_i\mp \ve_j)$, $i< j$.

$\g=\s\p(2n) $:   $v_b=v_{\ve_i}$, $v_a=v_{-\ve_i}$ for  $\bt=2 \ve_i $.

\noindent
For short roots $\bt =\ve_i$ of $\s\o(2n+1)$ we take for $V$ the fundamental spin module with
  $\nu_b=\frac{1}{2}\sum_{l=1}^n\ve_l$ and $\nu_a=\frac{1}{2}\sum_{l=1}^n\ve_l-\ve_i$,
\end{proof}
In what follows we assume that the admissible triples $(V,v_a,v_b)$ are fixed as in Proposition \ref{adm_rep}.
Observe that in all cases the dimension of weight spaces in $V$ is $1$.
For each   simple root $\al \in \Pi$,
$$
e_\al \phi_{m\bt}(\zt)1_\zt\propto \dt_{\nu_b-\nu_c,\al} (q^{2(\zt+\rho,\bt)}-q^{m(\bt,\bt)}) \psi(\zt)1_\zt,
$$
where  $\psi(\zt)\in U_(\g_-)$  \cite{M8}.
The  vector $\phi_{m\bt}(\zt)1_\zt$ is  extremal if $\zt$ satisfies $q^{2(\zt+\rho,\bt)}=q^{m(\bt,\bt)}$, and
the elements $\phi_{m\bt}(\zt)$ and  $\psi(\zt)\not =0$. Generically these conditions are fulfilled but
we are interested in very special $\zt$ that is a sum of base weight  $\la$ and $\xi\in \La^+_\k$.
We require that $\phi_{m\bt}(\zt)$ is regular in $q$ in a neighbourhood of  $q=1$ for every such $\zt$. Moreover,
the element $\phi_{m\bt}$ should have a proper classical limit $q\to 1$ with fixed $\zt$.

Factorization of Shapovalov elements   reduces the  problem of regularity
to the question about $\phi_{\bt}(\zt)$.
Observe from (\ref{norm_sing_vec}) that the node factors tend to zero  for generic $\zt$, whence $\phi_\bt(\zt_k)$ tends
to the classical root vector $f_\bt$. However   $\phi_\bt(\zt_k)$ may have poles at special $\zt_k$, and  regularized $\phi_\bt(\zt_k)$ may fail to tend to $f_\bt$.
In the next section we demonstrate that admissible triples from Proposition \ref{adm_rep}
guarantee  the proper classical limit of $\phi_\bt(\zt_k)$  for  all $t$.

Once the triple $(V,v_a,v_b)$ has been fixed, the sequence of weights $(\zt_k)_{k=0}^m$ from Proposition \ref{Shap_el} depends only on $\zt=\zt_0$.
Abusing notation we will write $\phi_\bt^m(\zt)=\phi_\bt(\zt_{m-1})\>\ldots \> \phi_\bt(\zt_0)$ or simply
$\phi_\bt^m$ when the weight $\zt$ is clear from the context. This convention will unify notation with the case of  $\bt\in \Pi_\l$, when
the  Shapovalov element $\phi_\bt^m$ is a true power of the Chevalley generator $f_\bt$,
which is of course  independent of $\zt$.

\subsection{Regularity of Shapovalov elements}
In the previous section we presented  a construction of extremal vectors  in Verma modules
from matrix elements of the Shapovalov form. We discussed that it apparently works  for "generic" weight
satisfying a particular Kac-Kazhdan condition. When it comes to a special weight from $\la+\La_\g$,
some node factors may get singular, and properties of  regularized matrix entries are not obvious.
This problem is solved in this section for all initial points and their base weights.

Fix $t\in T$ with the  centralizer  $\k$, a base weight $\la$,  and pick up $\bt \in \Pi_{\k}$.
\begin{definition}
\label{mid_node_root}
We call an admissible $\bt$-representation $(V,v_a,v_b)$   $t$-regular if
for each $v_c\in V$ of weight $\nu_c$ such that
$
v_b\succ v_c\succ v_a
$
and all $\zt\in \la+\La_{\g}$
the $U_q(\g_-)$-valued function  $q\mapsto s_{ca}(\zt,q)$ is regular at  $q=1$ and $s_{ca}(\zt,1)=0$.
\end{definition}
It follows from  (\ref{norm_sing_vec}) that being regular depends only on $t$ and not on a choice of base weight $\la$
because the node factors essentially involve squared $q^{(\la, \mu)}$ with $\mu \in \Gamma$, cf. (\ref{base weight}).

In particular, regularity implies that specialization at $\zt$ makes $s_{ca}$ a well defined rational function of $q$.
Clearly if the root $\bt$ is simple, then its any representation is $t$-regular because there is no node between $v_b$ and $v_a$
to violate the conditions.

We call a  node $v_c$ between $v_b$ and $v_a$  $t$-singular if $\nu_c(t)=\nu_a(t)=\nu_b(t)$, that is,
if $\mu_c(t)=1$ for $\mu_c=\nu_c-\nu_a$. Then the node factor $-\frac{q^{\eta_{\mu_c}}}{[\eta_{\mu_c}]_q}$ in (\ref{norm_sing_vec}) evaluated at $\zt \in \la+\La_\g$ may not
vanish in the classical limit $q\to 1$. As a consequence, the matrix element $s_{ca}$ may not vanish as $q\to 1$.
On the contrary, if all nodes between $v_b$ and $v_a$ are non-singular relative to $t$, then the $\bt$-representation $(V,v_a,v_b)$ is $t$-regular
because all node factors between the end nodes go to zero at $q=1$.

A special case of regular representation of $\bt \in \Pi_\k$ is realized when
both weight differences $\nu_b-\nu_c$ and $\nu_c-\nu_a$ are roots for all $v_c$ between $v_b$ and $v_a$. We call such nodes $v_c$ root splitting.
Then the $\bt$-representation  is $t$-regular
for any $t$ for which  $\bt \in \Pi_{\k}$. Indeed, there is no $t$-singular node $v_c$ between $v_b$ and $v_a$
since otherwise $\bt=(\nu_b-\nu_c)+(\nu_c-\nu_a)$ is a sum of two other
roots from $\Rm^+_\k$, which is impossible.
\begin{propn}
\label{reg=quant}
Suppose that  there is a $t$-regular $\bt$-representation $(V,v_a,v_b)$ for each $\bt\in \Pi_\k$. Then $t$ is quantizable. 
\end{propn}
\begin{proof}
Element $\phi_{m\bt}$ admits a factorization 
\be
\theta_{\bt,m}(\zt)=  \theta_\bt(\zt_{m-1})\>\ldots \> \theta_\bt(\zt_0)\> \in  U_q(\g_-)
\label{factor-root-degree1}
\ee
where $\zt_0=\zt$ and $\zt_k=\zt_{k-1}+\nu_a$, $k=1,\ldots, m$, \cite{M8}. Therefore it is sufficient to consider the case $m=1$. Up to a non-zero scalar multiplier, the summation formula (\ref{norm_sing_vec})  can be rewritten as
\be
\label{ABRR-equiv}
\check s_{ba}= c_{ba}+\sum_{v_b\succ v_c\succ v_a}c_{bc}s_{ca}.
\ee
Only the first term survives in the classical limit, and it goes to the classical root vector $f_\bt$. 
\end{proof}
\subsection{All points of the maximal torus are quantizable}
In this section, we prove that all  points from the maximal torus are quantizable for $G$  of type $A$, $B$, $C$, and $D$.
Specifically, we will show that the admissible $\bt$-representations from
Proposition \ref{adm_rep} are $t$-regular for each  $\bt\in \Pi_{\k/\l}$ and all $t\in T\subset G$.

A node factor in (\ref{norm_sing_vec}) evaluated at weight $\zt=\la+\xi$, $\xi \in \La_\g$ reads
\be
 -\frac{q^{\eta_\mu(\zt)}}{[\eta_\mu(\zt)]_q}\propto \frac{q-q^{-1}}{\mu(t)q^{2(\xi+\eta,\mu)-\frac{1}{2}(\mu,\mu)}-1},
 \label{node factor}
\ee
If the node is not singular,  i.e. $\mu(t)\not =1$, it may
have at most a finite set of poles, as a function of $q$. In the special case of $t\in T_\Qbb$,  there are no poles at all
because $\mu(t)$ is a root of unity while $q$ is not.
 Such  factors tend to zero in the classical limit. If the node is $t$-singular,
then the analysis is more delicate. We will come across it when doing orthogonal $\g$.

 \begin{propn}
All $t\in T$ are quantizable for  $\g=\s\l(n)$ or  $\g=\s\p(2n)$.
\end{propn}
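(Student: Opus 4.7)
The plan is to reduce quantizability to a root-splitting check for the admissible triples of Proposition \ref{adm_rep} and then verify that check case by case in types $A$ and $C$. By the factorization formula (\ref{factor-root-degree}), together with the observation that each shifted weight $\zt_k = \la+\xi+k\nu_a$ again lies in $\la+\La_\g$, it suffices to prove, for each compound $\bt \in \Pi_{\k/\l}$ and each such $\zt_k$, that the single factor $\phi_\bt(\zt_k)$ is regular in $q$ (at every $q$ not a root of unity when $t \in T_\Qbb$) and tends to the classical root vector $f_\bt$ as $q\to 1$; the Shapovalov element $\phi_{m\bt}$ will then automatically converge to $f_\bt^m$. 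Proposition \ref{Xc_root} delivers exactly this conclusion once the admissible triple $(V,v_a,v_b)$ fixed in Proposition \ref{adm_rep} is shown to be $t$-regular in the sense of Definition \ref{mid_node_root}.

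The decisive structural observation, recorded in the paragraph following Definition \ref{mid_node_root}, is that an intermediate node $v_c$ whose two weight differences $\nu_b-\nu_c$ and $\nu_c-\nu_a$ are both roots of $\g$ is automatically non-singular for any $t$ with $\bt\in\Pi_\k$: since $(\nu_b-\nu_c)(t)(\nu_c-\nu_a)(t)=\bt(t)=1$, a $t$-singularity at $v_c$ would force both factors to be $1$ and $\bt$ would split as a sum of two positive roots of $\k$, contradicting its simplicity. Consequently the task collapses to verifying the root-splitting property for every intermediate node of the admissible triples and then quoting Proposition \ref{Xc_root}.

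For $\g=\s\l(n)$ the natural module has one-dimensional weight spaces with weights $\ve_1,\ldots,\ve_n$, and for $\bt=\ve_i-\ve_j$ the intermediate weights $\ve_k$ with $i<k<j$ manifestly yield the positive roots $\ve_i-\ve_k$ and $\ve_k-\ve_j$ on the two sides. For $\g=\s\p(2n)$ the compound positive roots split into three families, $\ve_i-\ve_j$ with $j\geqslant i+2$, $\ve_i+\ve_j$ with $i<j$, and $2\ve_i$ with $i<n$; a direct inspection of the intermediate weights $\pm\ve_k$ of the natural $2n$-dimensional representation, paired with the $v_b,v_a$ prescribed by Proposition \ref{adm_rep}, shows that both $\nu_b-\nu_c$ and $\nu_c-\nu_a$ are elements of $\Rm^+_\g$ in every case. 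The only place requiring a little care is $\bt=2\ve_i$ with $i<n$, where intermediate weights of both signs $\pm\ve_k$ appear, but the differences $\ve_i\mp\ve_k$ and $\ve_i\pm\ve_k$ needed are all positive roots of $\s\p(2n)$.

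What I expect to be the main obstacle is simply the bookkeeping of the three families in $C_n$ together with a careful handling of the pseudo-Levi case $\bt=2\ve_i$ with $i<n$; no deeper analytic input is needed because the regularity clause of Definition \ref{regular ip} is free: by the explicit form (\ref{node factor}) of a node factor, $\mu(t)$ is a root of unity while $q$ is not, so the denominator $\mu(t)q^{2(\xi+\eta,\mu)-\frac{1}{2}(\mu,\mu)}-1$ cannot vanish anywhere in $\C^\circ$ and $\phi_{m\bt}$ has no exceptional values of $q$ when $t\in T_\Qbb$.
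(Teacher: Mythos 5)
Your argument is correct and follows essentially the same route as the paper: reduce quantizability to $t$-regularity of the admissible triples via Proposition \ref{Xc_root}, observe that root-splitting nodes can never be $t$-singular when $\bt\in\Pi_\k$, and verify root-splitting for the natural modules in types $A$ and $C$. The paper compresses your case-by-case check for $\s\p(2n)$ into the single remark that in these two types \emph{any} pair of distinct weights of the natural module differ by a root, so every intermediate node is automatically root-splitting; your bookkeeping of the three families of compound roots in $C_n$ is harmless but unnecessary.
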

\begin{proof}
For each composite root, the module $V$ in the triple $(V,v_a,v_b)$ from Proposition \ref{adm_rep} is the natural representation of minimal dimension.
The partial ordering in $V$ is total, and the difference between any pair of distinct weights from $\La(V)$ is a root.
So every node between $v_b$ and $v_a$ is root splitting, and no node factor is singular.
Now the proof follows because all  factors  (\ref{node factor}) go to zero
in the classical limit.
\end{proof}
\subsubsection{Orthogonal $\g$}
There are natural $\s\l(n)$-subalgebras in orthogonal $\g$ of rank $n$.
Their composite roots have been treated in the previous section. We will consider only complementary roots below.

First suppose that $\g=\s\o(2n+1)$ and $\bt=\ve_i$ is a short root. The admissible triple is realized in the spin module $V$ with
 $v_b$ of weight $\frac{1}{2} (\ve_1+\ldots+\ve_n)$ and $v_a=v_b-\ve_i$.
 The Hasse sub-diagram
between these points is linear:
 \begin{center}
\begin{picture}(280,30)

\put(35,20){$e_{\al_n}$}
\put(235,20){$e_{\al_i}$}

\put(5,0){$v_{b}$}
\put(255,0){$v_{a}$}

\put(10,15){\circle{3}}
\put(60,15){\circle{3}}
\put(210,15){\circle{3}}
 \put(260,15){\circle{3}}
 \put(55,15){\vector(-1,0){40}}
\put(105,15){\vector(-1,0){40}}
\put(125,15){$\ldots$}
\put(205,15){\vector(-1,0){40}}
\put(255,15){\vector(-1,0){40}}
 \end{picture}
\end{center}
Every node  between $v_b$ and $v_a$ splits $\ve_i$ to the sum of two roots $\ve_i=(\ve_{i}-\ve_k)+\ve_k$ for some
$i<k<n$. Therefore this $\ve_i$-representation  is $t$-regular for all $t$.

Let us turn to the case of long roots. Set $N$ to be the dimension of the minimal fundamental representation
of $\g$.
\begin{lemma}
\label{SO-roots}
If $\bt=\ve_i+\ve_j\in \Pi_{\k}$ and $\al=\ve_i-\ve_j\in \Rm^+_\k$ for $i<j$, then $\ve_i(t)=\ve_j(t)=-(\pm 1)^{N+1}$.
\end{lemma}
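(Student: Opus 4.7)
The plan is to unpack the hypotheses into multiplicative constraints on $\ve_i(t)$ and $\ve_j(t)$, and then case-split on the two orthogonal types since the formula $-(\pm 1)^{N+1}$ behaves differently depending on the parity of $N+1$.

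First I would use the characterization $\al\in\Rm_\k\iff\al(t)=1$. The hypothesis $\bt=\ve_i+\ve_j\in\Pi_\k\subset\Rm_\k$ becomes $\ve_i(t)\ve_j(t)=1$, and $\al=\ve_i-\ve_j\in\Rm^+_\k$ becomes $\ve_i(t)\ve_j(t)^{-1}=1$. Multiplying and dividing these relations yields $\ve_i(t)=\ve_j(t)$ and $\ve_i(t)^2=1$, so $\ve_i(t)=\ve_j(t)\in\{+1,-1\}$. This step uses only the basic dictionary $\Rm_\k=\{\mu\in\Rm_\g:\mu(t)=1\}$ recalled in Section \ref{Sec-GPVM-def}.

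This already settles the $D_n$ case $\g=\s\o(2n)$, where $N=2n$ makes $N+1$ odd and the formula $-(\pm 1)^{N+1}=\mp 1$ permits either sign; the calculation above gives exactly this. For the $B_n$ case $\g=\s\o(2n+1)$, $N+1$ is even, so $-(\pm 1)^{N+1}=-1$ and I must rule out the possibility $\ve_i(t)=+1$. The key structural input specific to $B_n$ is the presence of the short positive roots $\ve_k\in\Rm^+_\g$, absent in $D_n$. Arguing by contradiction, suppose $\ve_i(t)=\ve_j(t)=+1$. Then $\ve_i,\ve_j\in\Rm_\k$; since they lie in $\Rm^+_\g$ and the polarization of $\k$ is induced from that of $\g$, they lie in $\Rm^+_\k$. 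But then $\bt=\ve_i+\ve_j$ decomposes as a sum of two positive $\k$-roots, contradicting $\bt\in\Pi_\k$. Hence $\ve_i(t)=-1$ in the $B_n$ case, matching the formula.

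I do not foresee a serious obstacle — the lemma essentially unpacks the definitions. The only subtle point is recognizing why the short roots of $B_n$ provide the obstruction that rules out $+1$ there, whereas in $D_n$ both signs are genuinely realized and the formula correctly reflects the ambiguity through the odd exponent.
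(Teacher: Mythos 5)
Your proof is correct and follows essentially the same route as the paper: unpack $\al(t)=\bt(t)=1$ to get $\ve_i(t)=\ve_j(t)=\pm1$, observe this already settles $D_n$, and rule out $+1$ in the $B_n$ case by noting that $\ve_i,\ve_j$ would then be positive $\k$-roots, contradicting simplicity of $\bt=\ve_i+\ve_j$ in $\Rm^+_\k$. The only difference is that you spell out the $D_n$ case explicitly, which the paper leaves implicit.
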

\begin{proof}
The inclusion  $\al, \bt \in \Rm^+_\k$ implies $\ve_i(t)=\ve_j(t)=\pm 1$.
Now suppose that $\g=\s\o(2n+1)$ and $\ve_i(t)=\ve_j(t)=1$.
Then $\ve_i,\ve_j\in \Rm^+_\k$ and $\bt=\ve_i+\ve_j$  is not simple in $\Rm^+_\k$
which is a contradiction.
\end{proof}
\noindent
Denote $d_j=q^{\eta_{2\ve_j}}+1=q^{2\eta_{\ve_j}-1}+1$ if $\g=\s\o(2n+1)$  and $d_j=q^{2\eta_{2\ve_j}}-1$ if $\g=\s\o(2n)$.
  If the point $t$ underlying a base weight $\la$ satisfies   $\ve_j(t)=-(\pm 1)^{N+1}$,
then $d_j(\la)$ tends to zero as $q\to 1$.

Now  let  $V$ be the fundamental $U_q(\g)$-module of minimal dimension,
with the set of weights $\La(V)=\{\pm\ve_i\}_{i=1}^n$ in the even and $\La(V)=\{\pm\ve_i\}_{i=1}^n\cup \{0\}$ in the odd cases.

\begin{lemma}
Suppose that $\bt=\ve_i+\ve_j\in \Pi_{\k}$, with  $i<j$.
Then the triple $(V,v_i,v_{-j})$ is a $t$-regular $\bt$-representation.
\end{lemma}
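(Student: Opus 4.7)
The plan is to verify Definition~\ref{mid_node_root} for the triple $(V,v_i,v_{-j})$ by showing that for every intermediate weight vector $v_c$ strictly between $v_b=v_i$ and $v_a=v_{-j}$, the matrix entry $s_{ca}(\zt,q)$ of the Shapovalov inverse form is regular at $q=1$ and vanishes there. The strategy parallels the earlier $\s\l$ and $\s\p$ cases, but must accommodate the single exceptional weight difference $2\ve_j$ which is not a root of $\s\o$.

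First I would enumerate the intermediate nodes: they are $v_{\ve_k}$ for $k>i$ (subject to $\ve_k\succ -\ve_j$, which rules out $k=n$ when $\g=\s\o(2n)$ and $j=n$), possibly $v_0$ in the odd case, and $v_{-\ve_k}$ for $k>j$. Lemma~\ref{SO-roots} supplies $\ve_i(t)=\ve_j(t)=:\epsilon\in\{\pm 1\}$, and the simplicity of $\bt=\ve_i+\ve_j\in\Pi_\k$ excludes the decompositions $\bt=(\ve_i-\ve_k)+(\ve_k+\ve_j)$ and $\bt=(\ve_j-\ve_k)+(\ve_i+\ve_k)$ in $\Rm^+_\k$ for $k\neq i,j$ with $k>i$, forcing $\ve_k(t)=-\epsilon$ for every such $k$. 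A direct computation of $\mu_c(t)=(\nu_c-\nu_a)(t)$ across the possibilities $\nu_c\in\{\ve_k,0,-\ve_k\}$ yields $\mu_c(t)=-1$ for every intermediate $v_c\neq v_{\ve_j}$, while $\mu_{v_{\ve_j}}(t)=\ve_j(t)^2=1$. Hence the only potentially $t$-singular intermediate is $v_{\ve_j}$; in the case $\g=\s\o(2n)$ with $j=n$ it drops out entirely because $\ve_n\not\succ -\ve_n$, leaving no $t$-singular intermediate at all.

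For every non-$t$-singular intermediate $v_c$ the argument of Proposition~\ref{Xc_root} applies verbatim: every node factor in the sub-diagram $[v_c,v_a]$ vanishes at $q=1$, the conversion factor $q^{\eta_{\mu_c}}/[\eta_{\mu_c}]_q$ vanishes at $q=1$, and hence $s_{ca}|_{q=1}=0$. For the critical node $v_c=v_{\ve_j}$ I would invoke the recursion (\ref{ABRR-equiv}),
\[
\check s_{v_{\ve_j},v_{-\ve_j}}=c_{v_{\ve_j},v_{-\ve_j}}+\sum_{v_{\ve_j}\succ v_d\succ v_{-\ve_j}}c_{v_{\ve_j},v_d}\,s_{d,v_{-\ve_j}}.
\]
The direct term $c_{v_{\ve_j},v_{-\ve_j}}$ vanishes at $q=1$ because $2\ve_j$ is not a root of $\g$, so the classical limit $\sum_\al e_\al\tp f_\al$ of $\Cc$ has no component of that weight. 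Each intermediate $v_d$ of $[\ve_j,-\ve_j]$ is non-$t$-singular by the case analysis of the previous paragraph, so $s_{d,v_{-\ve_j}}|_{q=1}=0$ by the preceding step; hence $\check s_{v_{\ve_j},v_{-\ve_j}}|_{q=1}=0$. Expanding $2\ve_j$ in simple roots of $\g$ and applying (\ref{def_base weight}) together with Lemma~\ref{SO-roots} gives $q^{\eta_{2\ve_j}}|_{q=1}=-1$ for $\s\o(2n+1)$ (and an analogous nonvanishing value for $\s\o(2n)$); thus $[\eta_{2\ve_j}]_q|_{q=1}\neq 0$ for generic $\xi\in\La^+_\k$, and $s_{v_{\ve_j},v_{-\ve_j}}|_{q=1}=0$ follows.

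The main obstacle I anticipate is this sign analysis at the exceptional node: the ``naive'' $t$-singularity $(2\ve_j)(t)=1$ threatens to produce a pole of the conversion prefactor, but the signs in (\ref{def_base weight}), tracked through the expansion $2\ve_j=2\sum_{i=j}^n\al_i$ for $\s\o(2n+1)$ or $2\ve_j=\al_{n-1}+\al_n+2\sum_{i=j}^{n-2}\al_i$ for $\s\o(2n)$, combine to give $q^{\eta_{2\ve_j}}|_{q=1}=\pm 1$ rather than exactly $+1$, so $[\eta_{2\ve_j}]_q$ does not vanish at $q=1$ for generic $\xi$. The finitely many degenerate $\xi\in\La^+_\k$ for which $[\eta_{2\ve_j}(\la+\xi)]_q|_{q=1}=0$ would be handled by a first-order expansion: the direct term and every path contribution in $\check s_{v_{\ve_j},v_{-\ve_j}}$ are $O(q-1)$, matching the order of the denominator, so the limit still vanishes.
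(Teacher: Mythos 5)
Your combinatorial analysis of the Hasse interval between $v_i$ and $v_{-j}$, the identification via Lemma~\ref{SO-roots} of $v_{\ve_j}$ as the unique possibly $t$-singular intermediate node, and the use of the recursion~(\ref{ABRR-equiv}) to conclude $\check s_{v_{\ve_j},v_{-\ve_j}}\big|_{q=1}=0$ are all sound and close in spirit to what the paper does. The gap is in the last step, the conversion from $\check s_{j,-j}$ back to $s_{j,-j}$.

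Writing $q^{\eta_{2\ve_j}(\la+\xi)}=\ve_j(t)\,q^{\,r}=-q^{\,r}$ with $r=2(\kappa+\xi-\ve_j,\ve_j)$ (for $\s\o(2n+1)$), one has $[\eta_{2\ve_j}]_q=-[r]_q$, whose value at $q=1$ is $-r$. The ``degenerate'' locus where this vanishes is not a finite set: it is the whole affine hyperplane $\{\xi:(\xi,\ve_j)=1-(\kappa,\ve_j)\}$ in $\la+\La_\g$, and on that hyperplane $[\eta_{2\ve_j}]_q\equiv 0$ identically in $q$, which forces $\check s_{j,-j}\equiv 0$ and leaves $s_{j,-j}$ completely undetermined by the relation $s_{j,-j}=-\check s_{j,-j}\,q^{\eta_{2\ve_j}}/[\eta_{2\ve_j}]_q$. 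Moreover, your first-order-expansion argument for such $\xi$ is logically flawed: if numerator and denominator are each $O(q-1)$, the quotient is $O(1)$, not $o(1)$, so it does not yield vanishing. So your argument establishes Definition~\ref{mid_node_root} only for those $\zt\in\la+\La_\g$ with $r\neq 0$, which falls short of the required ``for all $\zt$''.

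The step you are missing is precisely what the paper supplies: a structural divisibility statement $\check s_{j,-j}(\zt)=\psi(\zt)\,d_j(\zt)$ as rational functions of $\zt$, with $\psi$ regular. The paper proves this by observing that $d_j(\zt)=0$ does not trigger any Kac--Kazhdan condition, so for generic $\zt$ on the hypersurface $d_j(\zt)=0$ the Verma module $\hat M_\zt$ is irreducible, the (normalized) extremal vector in $V\tp\hat M_\zt$ whose top coefficient is $-[\eta_{2\ve_j}]_q q^{-\eta_{2\ve_j}}\propto d_j$ must then vanish entirely (because on an irreducible Verma module $(\id\tp\eps)$ is a bijection onto $V$), hence every component including $\check s_{j,-j}$ vanishes along $d_j=0$. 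This divisibility, together with the identity $s_{j,-j}=(q-q^{-1})\,\psi\, d_j/(q^{-2\eta_{2\ve_j}}-1)$ and the fact that $d_j/(q^{-2\eta_{2\ve_j}}-1)$ is bounded (equal to $q^{2r}/(1+q^r)\to\tfrac12$ as $q\to1$, also when $r=0$), gives the required vanishing of $s_{j,-j}$ uniformly in $\xi$. Without the divisibility by $d_j$, a pointwise evaluation of $\check s_{j,-j}$ at $q=1$ is not enough.
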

\begin{proof}
The only node between $v_i$ and $v_{-j}$ whose Cartan factor may be  $t$-singular is $v_j$, because all other nodes split $\bt$
into sum of two roots. Therefore $s_{l,-j}$ are regular at almost all $q$ (at all if $t\in T_\Qbb$) and tend to zero as $q\to 1$ for all $l$
such that  $j\succ l\succ -j$.
Let us prove that for $s_{j,-j}$ assuming $\ve_j(t)=-(\pm 1)^{N+1}$,
as in Lemma \ref{SO-roots}. This matrix element has an apparent singularity in the classical limit
because its denominator includes $[\eta_{2\ve_j}]_q$ divisible by $d_j$. We will show that the singularity is removable.
Without loss of generality we will assume $j=1$ (otherwise we should proceed to the quantum subgroup
with roots $\al_j,\ldots, \al_n$).

 Notice that $d_1(\zt)=0$ does not imply the q-version of Kac-Kazhdan condition for any root,
\cite{DCK}.
Therefore we can choose an irreducible Verma module $\tilde M_\zt$ of generic highest weight $\zt$ such that $d_1(\zt)=0$.
The  extremal vector
$$-v_{-1}\tp q^{-\eta_{2\ve_1}}[\eta_{2\ve_1}]_q1_\zt +\ldots +v_1\tp \check s_{1,-1}1_\zt\in V\tp \tilde M_\zt
$$
vanishes at such $\zt$ because the leftmost term does (otherwise its $\tilde M_\zt$-components span a $U_q(\g_+)$-submodule  which
contains an extremal vector in $\tilde M_\zt$ distinct from $1_\zt$). Therefore $\check s_{1,-1}$ is divisible by $d_1$
 and  $\check s_{1,-1}=\psi d_1$, where $\psi$ is regular at $d_1=0$
for almost all $q$ (all if $t$ is of finite order) including $q=1$.
That is also true for $s_{1,-1}=\check s_{1,-1}(q-q^{-1})/(q^{-2\eta_{2\ve_1}}-1)$ at $\zt \in \la+\La_\g$, thanks to Lemma \ref{SO-roots}.
Moreover, $s_{1,-1}=(q-q^{-1})\psi  \> d_1/(q^{-2\eta_{2\ve_1}}-1)$ vanishes in the classical limit $q\to 1$.

Using the presentation
(\ref{ABRR-equiv}) for all nodes $l$ between $i$ and $j$ we conclude that $(v_i,v_{-j})$ is indeed
a $t$-regular representation of $\ve_i+\ve_j$.
The element $\check s_{i,-j}$ is regular at all $q$ if $t$ is
of finite order, because the only possible singularity  in $s_{j,-j}$ is canceled by the factor $d_j$.
\end{proof}
Summarizing the findings of this section, we conclude that
\begin{propn}
All $t\in T$ are quantizable for orthogonal $\g$.
\end{propn}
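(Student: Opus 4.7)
The plan is to verify Definition \ref{regular ip} root by root for each $t\in T$, assembling the content of the preceding lemmas. Fix $t$, its centralizer $\k$, a base weight $\la$, and a simple root $\bt\in\Pi_\k$. If $\bt\in\Pi_\l=\Pi_\k\cap\Pi_\g$, then $\phi_{m\bt}=f_\bt^m$ identically, so there is nothing to check; the entire argument reduces to the case $\bt\in\Pi_{\k/\l}$.

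For orthogonal $\g$, every such compound root $\bt$ falls into one of three families: an $A$-type root $\ve_i-\ve_j$ with $i<j$, treated by the minimal natural-module argument already used for $\s\l(n)$, where every intermediate node is root-splitting; a short root $\ve_i$ of $\s\o(2n+1)$, handled via the spin module, in which every intermediate node splits $\ve_i$ as a sum of two roots; and a long root $\ve_i+\ve_j$, handled by the two preceding lemmas. In each of the three cases the admissible triple $(V,v_a,v_b)$ chosen in Proposition \ref{adm_rep} is $t$-regular. Proposition \ref{Xc_root} then supplies, for every $\zt\in\la+\La_\g$, a $U_q(\g_-)$-valued element $\phi_\bt(\zt)$ that is regular at all $q\in\C^\circ$ outside a finite exceptional set (empty when $t\in T_\Qbb$) and whose classical limit is the root vector $f_\bt$. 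The factorization (\ref{factor-root-degree}) from Proposition \ref{Shap_el} then expresses $\phi_{m\bt}$ as the product of the $m$ shifted copies $\phi_\bt(\zt_k)$ with $\zt_k=\zt_{k-1}+\nu_a$ still lying in $\la+\La_\g$; multiplying the classical limits factor by factor yields $f_\bt^m$, which is exactly the quantizability requirement.

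The only non-formal step, and the genuine obstacle, is the long-root case $\bt=\ve_i+\ve_j$, where the node $v_j$ between $v_i$ and $v_{-j}$ can be $t$-singular and the matrix element $s_{j,-j}$ carries $[\eta_{2\ve_j}]_q$ in its denominator. At $\zt\in\la+\La_\g$ this factor contains $d_j$, which vanishes as $q\to 1$ precisely when $\ve_j(t)=-(\pm 1)^{N+1}$, so the entry has an apparent pole. The resolution outlined in the preceding lemma is to place an auxiliary extremal vector in $V\tp \hat M_\zt$ at a generic $\zt$ satisfying $d_j(\zt)=0$ and read off the divisibility $\check s_{j,-j}=\psi\, d_j$ from the vanishing of its leading summand; cancelling $d_j$ against the denominator produces both regularity at all $q\in\C^\circ$ in the finite-order case and the correct zero classical limit, which in turn keeps the regularity hypothesis of Proposition \ref{Xc_root} intact and closes the chain of implications.
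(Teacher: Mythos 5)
Your proposal is correct and takes essentially the same route as the paper: the paper's own ``proof'' is exactly the one-line remark ``Summarizing the findings of this section,'' and your expansion reproduces it faithfully — reduction to $\bt\in\Pi_{\k/\l}$, the three families of compound roots ($A$-type $\ve_i-\ve_j$ via the natural module, short $\ve_i$ via the spin module, long $\ve_i+\ve_j$ via the natural module with the $v_j$ node), $t$-regularity of each admissible triple, and the passage through Proposition \ref{Xc_root} and the factorization in Proposition \ref{Shap_el}. You also correctly isolate the long-root case with its apparent pole from $d_j$ and the divisibility $\check s_{j,-j}=\psi\,d_j$ as the one genuinely delicate step, matching the paper's lemma.
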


We conclude this section with a refinement of Proposition \ref{reg=quant}, which will be needed in a study of the whole collection
of modules $M_{\la,\xi}$ with fixed $\la$.
\begin{propn}
\label{Xc_root}
Suppose that  $\bt \in \Pi_{\k}$ and $m\in \N$ 
and pick up a base weight $\la$ for $t$.
Then
\begin{enumerate}
\item
the Shapovalov element
$
\phi_{m\bt}(\zt)=\check s_{ba}(\zt)$ evaluated at $\zt\in \la+\La_{\k}
$ is regular as a function of $q$ at all $q\in \Crt$,
\item
$\phi_{\bt}(\zt)$ does not vanish at all $q\in \Crt$,
\end{enumerate}
\end{propn}
\begin{proof}
For $\g$ of the classical type, each positive root $\bt$ contains a simple root $\al$ with multiplicity 1.
By \cite{M8} Proposition 4.4, one can assume that $\phi_{\bt}\in U_q(\b_-)$ is such that $\phi_{m\bt} =\phi_{\bt}^m$.
Then $\theta_{\bt,m}(\zt)=\prod_{k=0}^{m-1}\theta_{\bt}(\zt-k\bt)$
for all $\zt$. Since the shift by $\bt\in \Pi_\k$ preserves $\La_\k$, it suffices to consider only $\phi_\bt$.
Furthermore, $\phi_\bt$ can be constructed as a matrix element $\check s_{ba}$ via a Hasse diagram of the form
$$
v_b\quad\stackrel{e_{\al}}{\longleftarrow}\quad f_\al v_b\quad \ldots \quad  v_a,
$$
where the suppressed part is independent of $\al$. It can be realized as a sub-diagram
in the Hasse diagram of the quantized adjoint $\g$-module where $v_b$ is of zero weight, hence  weight differences $\nu_b-\nu_c$ are 
positive roots for all $c\prec b$, cf. \cite{M8} Theorem 5.3.

We saw that $t$-singular nodes  in $s_{ca}$ may appear only for orthogonal $\g$ but the corresponding
node factors cancel. Thus the only node (\ref{node factor}) may contribute to singularity of $\phi_\bt(\zt)$
is one whose weight $\mu$ is a positive root, and this root is in 
$\Rm^+_{\g/\k}$ (otherwise the node will split a simple root  $\bt\in \Pi_\k$ to a sum of two roots from $\Rm^+_{\k}$, which is impossible).
Then every denominator in (\ref{node factor}) is proportional to 
$
\mu(t)q^{2(\xi,\mu)-(\mu,\mu)}-1
$,
and  $\mu(t)\in \Spec(t)$. Therefore this factor does not turn zero at $q\in \C\backslash{\sqrt[\Z]{t}}$.
Thus we have proved 1).

To prove 2), observe that we have chosen the Hasse diagram so that  
 $s_{ca}$ in (\ref{ABRR-equiv}) are independent of the genrator $f_\al$ because the only $\al$-arrow in the subdiagram enters the first term 
$c_{bc}$.
Hence the first term is independent of the sum thanks to the PBW theorem. This proves the second statement.
\end{proof}

 \section{Generalized parabolic categories}
\label{Sec_Gen_parab_Verma_mod}

In this section we continue our study   of generalized parabolic modules introduced in Section \ref{Sec-GPVM-def}.

Fix a point $t\in T$ with its centralizer $\k$  and the maximal Levi subalgebra $\l\subset \k$.
Pick up a base weight $\la\in \h^*$, a $\k$-dominant weight $\xi \in \La^+_\k$, and put $\zt=\la+\xi$.
We have a system of Kac-Kazhdan conditions
(\ref{Kac-Kazhdan})
and the set $\{\phi_\al^{m_\al}1_{\zt}\}_{\al \in \Pi_\k}$ of extremal vectors in the Verma module $\tilde M_{\zt}$.
Denote by $\hat M_{\la,\xi}$ the quotient of $\tilde M_{\zt}$ by the sum of submodules generated
by $\{\phi_\al^{m_\al}1_{\zt}\}_{\al \in \Pi_\l}$. There is a sequence of epimorphisms
$$
\tilde M_{\zt}\to \hat M_{\la,\xi}\to M_{\la,\xi}.
$$
The parabolic Verma module $\hat M_{\la,\xi}$ is locally finite over $U_q(\l)$, \cite{M4}. Therefore
$M_{\la,\xi}$ is also locally finite when restricted to $U_q(\l)$. This fact is of importance for our
further study.

Pick up a composite root $\bt\in \Pi_{\k/\l}$.
It follows from factorization (\ref{factor-root-degree}) and Proposition \ref{Xc_root}
that the Shapovalov elements $\phi^{m_\bt}_\bt$ are  regular once $q\in \Crt$. Since the leading term in $\phi_\bt(\zt_k)$ (the first summand in (\ref{norm_sing_vec})) is the only one that contains a generator $f_\bt$ of the PBW-basis, it
is independent of the other terms. Thus we conclude that $\phi^{m_\bt}_\bt 1_{\zt}$ does not vanish
in $\hat M_{\la,\xi}$ and is an extremal vector, provided $q\in \Crt$.

As a $U_q(\g_+)$-module, $M_{\la,\xi}$ is isomorphic to $U_q(\g_-)/J^-_\xi$, where $J^-_\xi$ is the left ideal annihilating the highest vector in $M_{\la,\xi}$.
It is generated by $\phi_\al^{m_\al}$ with all $\al \in \Pi_\k$.
We will also consider the opposite module $M'_{\la,\xi}$ of lowest  weight $-\la-\xi$. It is isomorphic to the $U_q(\g_+)$-module $U_q(\g_+)/J^+_\xi$,
where $J_\xi^+=\si(J^-_\xi)$.

By $V^+_\xi$ we denote the kernel $V^{J^+_\xi}$ of $J^+_\xi$.
In the classical limit, weight vectors in $V^+_\xi$ are  in bijection with irreducible submodules in the $\k$-module $V\tp X_\xi$,
so $\La(V^+_\xi)+\xi$ is in $\La^+_{\k}$.
We denote by $^+\!V_\xi$ any $U_q(\h)$-invariant subspace in $V$ that is transversal to $\omega(\J^+_\xi)V$, thus we can
identify it with $V/\omega(\J^+_\xi)V$, the  dual to $V^+_\xi$ with respect to the contravariant form on $V$.
One can prove that in the classical limit
the form is non-degenerate on $V^+_\xi$, so one can choose $^+\!V_\xi=V^+_\xi$
for almost all $q$. Then the external twist $\theta$ responsible for  complete reducibility of $V\tp M_{\la,\xi}$
becomes an operator from $\End(V^+_\xi)$.

Much of our further analysis relies on elementary technical facts about base weights
 which we arrange as a separate proposition for further convenience.
\begin{lemma}
Let $t\in T$, $\k$ be its centralizer, and $\la$  a $t$-base weight. Then
\begin{enumerate}
\item
 For each $\al \in \Rm^+_{\g/\k}$ and all $c\in \Qbb$ the number $[(\la+\rho,\al^\vee)+c]_{q_{\al}}$ is not zero
at all $q\in \Crt$.
Its reciprocal tends to zero as $q\to 1$.
\item
For each $\al \in \Rm^+_{\k}$ and any $c\in \Qbb$ the function $q\mapsto [(\la+\rho,\al^\vee)+c]_{q_{\al}}$ is either identically zero or does not vanish at all $q\in \Crte$.
\end{enumerate}
\label{q-num}
\end{lemma}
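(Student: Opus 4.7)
The plan is to reduce the vanishing of the $q$-number to an explicit monomial equation in $q$ involving $\al(t)$, and then analyse the two cases separately. First I would extend the defining identity (\ref{def_base weight}) from simple to arbitrary positive roots: expanding $\al=\sum_{\bt\in\Pi_\g}n_\bt\bt$ with $n_\bt\in\Z_+$ and using multiplicativity of the Cartan exponentials, one obtains
\[
q_\al^{(\la+\rho,\al^\vee)}=q^{(\la+\rho,\al)}=\pm\sqrt{\al(t)}\,q^{(\kappa,\al)}=\pm\sqrt{\al(t)}\,q_\al^{(\kappa,\al^\vee)},
\]
with a sign depending on the choices made at the simple roots appearing in $\al$. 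Squaring removes this ambiguity and gives $q_\al^{\,2(\la+\rho,\al^\vee)+2c}=\al(t)\,q_\al^{\,2(\kappa,\al^\vee)+2c}$, so that
\[
\bigl[(\la+\rho,\al^\vee)+c\bigr]_{q_\al}=0\ \Longleftrightarrow\ q^{r}=\al(t)^{-1},\qquad r:=(\al,\al)\bigl((\kappa,\al^\vee)+c\bigr)\in\Qbb.
\]

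For part 1 one has $\al(t)\neq 1$. If $r=0$ the equation $1=\al(t)^{-1}$ has no solution, while if $r\neq 0$ it has at most finitely many solutions in $\C$; hence the $q$-number is non-zero for almost all $q\in\C^\circ$. When in addition $t\in T_\Qbb$, $\al(t)$ is a root of unity of some order $N$, so any solution $q$ of $q^{r}=\al(t)^{-1}$ would satisfy $q^{Nr}=1$ with $Nr\in\Qbb\setminus\{0\}$, forcing $q$ to be a root of unity; this is excluded by $q\in\C^\circ$, so the $q$-number does not vanish anywhere. For the classical-limit assertion I would compute numerator and denominator of the $q$-number separately: the numerator $q_\al^{(\la+\rho,\al^\vee)+c}-q_\al^{-(\la+\rho,\al^\vee)-c}$ tends to $\pm(\al(t)-1)/\sqrt{\al(t)}\neq 0$ since $\al(t)\neq 1$, whereas the denominator $q_\al-q_\al^{-1}$ tends to zero. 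Hence the $q$-number diverges and its reciprocal tends to zero.

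For part 2, $\al(t)=1$, and the vanishing condition reduces to $q^{r}=1$. If $r=0$ it holds identically, and $q_\al^{(\la+\rho,\al^\vee)+c}$ is then a continuous $\pm 1$-valued function on the connected set $\C^\circ$, hence identically $\pm 1$; the $q$-number is therefore identically zero. If $r\neq 0$, then $q^{r}=1$ forces $q$ to be a root of unity, which is excluded, so the $q$-number does not vanish anywhere on $\C^\circ$. This is the claimed dichotomy. The only mildly delicate point is the sign ambiguity in $\pm\sqrt{\al(t)}$ at compound roots, which squaring immediately handles, so no substantive obstacle is expected; the argument amounts to careful bookkeeping of rational exponents together with the exclusion of roots of unity from $\C^\circ$.
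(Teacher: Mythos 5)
Your proof is correct and follows essentially the same route as the paper's. Both arguments reduce the vanishing of $[(\la+\rho,\al^\vee)+c]_{q_\al}$ to the monomial equation $q_\al^{\,2(\kappa,\al^\vee)+2c}=\al(t)^{-1}$ via the base-weight relation, use rationality of the exponent together with the fact that $\al(t)$ is a root of unity while $q$ is not (for $t\in T_\Qbb$), obtain the classical-limit claim from $q_\al-q_\al^{-1}\to 0$ against a nonvanishing numerator, and settle part 2 by the same dichotomy on the exponent. The only cosmetic difference is that the paper works with the factorization $[z]_q=q_\al^{-z}(q_\al^{2z}-1)/(q_\al-q_\al^{-1})$ directly (which makes the sign ambiguity of $\sqrt{\al(t)}$ disappear automatically), whereas you square the relation and then handle the sign explicitly with a connectedness argument; both are fine.
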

\begin{proof}
By definition of base weight (\ref{def_base weight}) we write
$$
[(\la+\rho,\al^\vee)+c]_{q_{\al}}
=q^{-(\la+\rho,\al^\vee)-c}_\al\times\frac{\al(t) q^{2(\kappa,\al^\vee)+2c}_\al-1}{q_\al-q^{-1}_\al}.
$$
If  $\al \in \Rm^+_{\g/\k}$ then 1) holds true because  $\al(t)\not=1$.
 Finally, the inverse fraction goes to zero as $q_\al-q^{-1}_\al$ does. This proves the first assertion.

Now suppose that $\al \in \Rm^+_{\k}$ and therefore $\al(t)=1$.  Then $\al(t) q^{2(\kappa,\al^\vee)+2c}_\al=1$
if and only if $c=-(\kappa,\al^\vee)$ as $q$ is not a root of unity.  This proves the second assertion.
\end{proof}
 \begin{propn}
Let $Z$ be a module of highest weight $\zt\in \la + \La$. Suppose that $Z$ is locally finite over $U_q(\l)$.
Then for each $V\in \Fin_q(\g)$ and $\eta\in \La^+_\k$,
the map  $p_\g\colon (V\tp Z)[\la+\mu]  \to (V\tp Z)^+$ is well defined
for   $q\in \Crt$.
\label{projector_is_reg}
\end{propn}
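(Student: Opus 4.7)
The plan is to use the factorization (\ref{factorization}) at $\zt=0$, writing $p_\g=\prod_{\al\in\Rm^+_\g}p_\al(\rho_\al)$ in a fixed normal order, and to verify that each factor $p_\al(\rho_\al)$ is a well-defined operator on the weight space produced by the preceding factors applied to a given vector from $(V\tp Z)[\la+\mu]$. Two conditions have to be checked: the series (\ref{translationed_proj}) defining $p_\al(\rho_\al)$ must terminate on weight vectors, and the denominators appearing in (\ref{proj_eigen}) must be non-zero.

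Termination is automatic because $V\in\Fin_q(\g)$ is finite-dimensional and $Z$ is a highest-weight module with finite-dimensional weight spaces, so the Lusztig root vector $e_\al$ is locally nilpotent on $V\tp Z$ for every positive root $\al$. For the denominators, on a weight-$\nu$ vector they are quantum integers of the form $[(\nu+\rho,\al^\vee)+k]_{q_\al}$, $k\in\N$. Since weights in the support of $V\tp Z$ lie in $\la+\La_\g$, these arguments are precisely of the type governed by Lemma \ref{q-num}. For $\al\in\Rm^+_{\g/\k}$, Lemma \ref{q-num}(1) gives non-vanishing away from a finite set of $q$, empty when $t\in T_\Qbb$. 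For $\al\in\Rm^+_\l$, local finiteness of $Z$ over $U_q(\l)$ reduces the question to the action of the extremal projector of $U_q(\l)$ on finite-dimensional $U_q(\l)$-submodules of $V\tp Z$, where well-definedness is standard; equivalently, any identically vanishing denominator permitted by Lemma \ref{q-num}(2) is cancelled by the corresponding zero of the numerator $[\rho_\al-k]_{q_\al}$ in (\ref{proj_eigen}).

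The principal obstacle is the case $\al\in\Rm^+_{\k/\l}$, where $\al(t)=1$ allows Lemma \ref{q-num}(2) to produce an identically vanishing denominator, and yet local finiteness over $U_q(\l)$ does not directly apply. I would address this by tracking, for every weight $\nu$ arising in the support of the action, the precise integer $k\in\N$ at which $[(\nu+\rho,\al^\vee)+k]_{q_\al}$ can vanish, and checking that it coincides with a value of $k$ making the numerator $[\rho_\al-k]_{q_\al}$ vanish in (\ref{proj_eigen}). This coincidence follows by expanding $\al\in\Rm^+_\k$ as an integer combination of simple roots of $\g$ and applying the base-weight identity (\ref{def_base weight}) together with $\al(t)=1$ to pin down $q^{(\la+\rho,\al^\vee)}_\al$ as an integer power of $q_\al$ of the same parity as $\rho_\al=(\rho,\al^\vee)$. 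The resulting removable singularity yields a regular operator on $(V\tp Z)[\la+\mu]$, with the exceptional set of $q$ a finite subset of $\C^\circ$ that is empty when $t\in T_\Qbb$, establishing the proposition.
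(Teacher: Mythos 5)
Your decomposition by root type matches the paper's, and the treatment of $\Rm^+_{\g/\k}$ via Lemma~\ref{q-num}(1) is exactly right. But the argument you propose for $\Rm^+_{\k/\l}$ goes wrong in two connected ways, and it is the place where the proof actually has to do work.

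First, the ``obstacle'' you identify is illusory. On the weight space $(V\tp Z)[\la+\eta]$ with $\eta\in\La^+_\k$, the denominators produced by the factor $p_\al(\rho_\al)$ in~(\ref{translationed_proj}) are $[(\la+\eta+\rho,\al^\vee)+i]_{q_\al}$ with $i\geq 1$. Since $\al\in\Rm^+_\k$ gives $\al(t)=1$, the base-weight identity~(\ref{def_base weight}) yields $q_\al^{(\la+\rho,\al^\vee)}=\pm q_\al^{(\kappa,\al^\vee)}$, so up to an overall sign this denominator equals $[(\kappa+\eta,\al^\vee)+i]_{q_\al}$. Now $\kappa=\rho_\k$, so $(\kappa,\al^\vee)\geq 1$, and $\k$-dominance of $\eta$ gives $(\eta,\al^\vee)\geq 0$; hence $(\kappa+\eta,\al^\vee)+i$ is a strictly positive integer and the $q$-number is nonzero for every $q\in\C^\circ$. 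There is no identically vanishing denominator at all, so there is nothing to cancel. Stated in terms of Lemma~\ref{q-num}(2): the only way that $q\mapsto[(\la+\rho,\al^\vee)+c]_{q_\al}$ can vanish identically is at $c=-(\kappa,\al^\vee)$, but here $c=(\eta,\al^\vee)+i\geq 1 > -(\kappa,\al^\vee)$.

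Second, the cancellation mechanism you invoke cannot even be set up in this context. Formula~(\ref{proj_eigen}), with its numerator $[s-k]_q$, is the eigenvalue of $p(s)$ on a vector in a \emph{highest-weight} module of the $\s\l(2)$-triple, and $V\tp Z$ is not one. The object that actually acts on $V\tp Z$ is the series~(\ref{translationed_proj}), whose numerators $(-1)^kq^{k(s-1)}$ never vanish; there is no numerator available to kill a bad denominator, which is why the proof has to rule the bad denominator out directly. Relatedly, you would still need local nilpotence of $e_\al$ on $V\tp Z$ to truncate the series, which the paper gets from $Z$ being locally finite over $U_q(\l)$ together with finite-dimensionality of $V$ (and from Lemma~\ref{cut_proj}).

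Finally, your argument establishes only that $p_\g$ is a well-defined endomorphism of the weight space; the proposition also asserts it lands in $(V\tp Z)^+$. The paper gets this from the independence of the factorization~(\ref{factorization}) from the normal ordering: for each $\al\in\Pi_\g$ one may place $\al$ leftmost, and then the $\al$-factor sends the weight space into $\ker e_\al$, so the image lies in $\bigcap_{\al\in\Pi_\g}\ker e_\al=(V\tp Z)^+$. You should supply this step.
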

\begin{proof}
 Without loss of generality, we can assume that the base weight $\la$ defines the trivial representation of
   $U_q(\l)$, that is $(\la,\al)=0$ for all  $\al \in \Rm_\l$.

If $\al\in \Rm^+_{\g/\k}$, then all Cartan denominators in $p_\al$ entering factorization (\ref{factorization})
do not turn zero at $q\in \Crt$, and $p_\al$ is regular on $(V\tp Z)[\la+\eta] $. In particular,
for simple $\al$, the operator $p_\al$ sends $(V\tp Z)[\la+\eta] $ to $\ker e_\al$  because $V\tp Z$ is locally nilpotent over $U_q(\g^{\al}_+)$, cf. Lemma \ref{cut_proj}.

Now suppose that $\al \in \Rm^+_{\k}$.
The denominator in (\ref{translationed_proj}) returns
 $[(\kappa+\eta,\al^\vee)+i]_{q_\al}$ on the
weight space $(V\tp Z)[\la+\eta]$. It never vanishes because $(\kappa+\eta,\al^\vee)$ and $i$ are positive integers.
Since $Z$ is locally finite over $U_q(\l)$, the tensor product $V\tp Z$ is locally finite too. Then  $\la+ \eta$ is $\l$-dominant,
 and $p_\al$ is well defined on $(V\tp Z)[\la+\eta] $. Moreover, $(V\tp Z)[\la+\eta] $ is mapped to $\ker e_\al$ by  $p_\al$ if $\al \in \Pi_\l$,
 see \cite{M2}, Proposition 3.6.

Thus the map $p_\g\colon (V\tp Z)[\la+\eta] \to V\tp Z$ is well defined and independent of a normal ordering on $\Rm^+_\g$.
 We are left to show that it ranges in $(V\tp Z)^+$. Fix $\al\in \Pi_\g$ and  order positive roots with  $\al$ in the left-most position.
Then $p_\g$-image of $(V\tp Z)[\la+\eta] $ is killed by $e_\al$ as argued and hence by all $e_\al$ with  $\al \in \Pi_\g$ since  $p_\g$ is independent of normal ordering.
\end{proof}
Remark that the operator $p_\g$ can be factorized as $p_{\g/\l}\times p_{\l}$ as there is a normal ordering with roots from $\Rm^+_\l$ on the right.
The right factor is the extremal projector to the subspace of $U_q(\l_+)$-invariants in $V\tp Z$.

\subsection{Base module}
We start our analysis with the base module $M_\la$ and prove that it is irreducible for
almost all $q$.
Our interest in $M_\la$  is motivated by an idea to represent quantized polynomial ring $\C[O]$ as a subalgebra of linear operators on $M_\la$.
A neighborhood of the initial point   $t\in O$ can be parameterized by the tangent space $\g/\k\simeq \g_-/\k_-\op \g_+/\g_+$, which facilitates an embedding of $\C[O]$
in $\C[\g/\k]\simeq
\C[\g_+/\k_+]\tp \C[\g_-/\k_-]
$. This tensor product looks like a matrix algebra if  there is a suitable pairing between factors making  them  dual vector spaces.
This observation suggests to seek for a deformation of $\C[\g_-/\k_-]$ as a module of highest weight of the same functional dimension
while
a deformation of  $\C[\g_+/\k_+]$ as its opposite module of lowest weight. The required duality will be secured if the modules are irreducible.

It is easy to meet the requirement on the size of $M_\la$ if $\k$ is Levi thanks to  a PBW basis in $U_q(\g_-)$
with all monomials from $U_q(\l_-)$  on the right. The complementary monomials  deliver
a basis on the scalar parabolic Verma module $M_\la$  of highest weight $\la$.
Finding a basis in  $M_\la$ is   challenging for non-Levi $\k$, so we have to resort to deformation arguments.
We regard $M_\la$ as a module over $U_q(\g_-)$ and extend the ring of scalars to $\C_1(q)$. Then
$M_\la$ is a deformation of a classical $U_q(\g_-)$-module, and the deformation respects the weight spaces, up to
the shift by $\la$.
\begin{lemma}
\label{base_char}
The character of the $\C_1(q)$-extension of $M_\la$ equals $\Char(M_\la)=\prod_{\al \in \Rm^+_{\g/\k}}(1-e^{-\al})^{-1}\times e^\la$.
\end{lemma}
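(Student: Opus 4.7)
The plan is to prove the character equality by sandwiching $\Char M_\la$ between matching upper and lower bounds over $\C_1(q)$, using the quotient structure of $M_\la$ for the upper bound and extremal-projector / Shapovalov techniques for the lower bound.

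For the upper bound: since $\Pi_\l\subset\Pi_\k$ and $\phi_\al=f_\al$ for $\al\in\Pi_\l$, the generalized parabolic Verma module $M_\la = M_{\la,0}$ is by construction a quotient of the ordinary parabolic Verma module $\hat M_\la$ associated to the Levi subalgebra $\l$. Using a PBW ordering of $\Rm^+$ with $\Rm^+_\l$ at the right, $\hat M_\la$ is exhibited as a free $\C_1(q)$-module with basis $\{\prod_{\bt\in\Rm^+_{\g/\l}}f_\bt^{n_\bt}\cdot 1_\la\}$, whence $\Char \hat M_\la=\prod_{\al\in\Rm^+_{\g/\l}}(1-e^{-\al})^{-1}\,e^\la$ and therefore $\Char M_\la\leqslant\prod_{\al\in\Rm^+_{\g/\l}}(1-e^{-\al})^{-1}\,e^\la$.

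For the matching lower bound, I would show that the surjection $\hat M_\la\twoheadrightarrow M_\la$ becomes an isomorphism after extension to $\C_1(q)$, i.e.\ that $\phi_\al 1_\la=0$ in $\hat M_\la$ generically in $q$ for every $\al\in\Pi_{\k/\l}$. I would evaluate $\phi_\al(\la)1_\la$ via the explicit formula (\ref{norm_sing_vec}) for $\check s_{ba}$ applied to the admissible triple $(V,v_a,v_b)$ from Proposition \ref{adm_rep}, converting each Shapovalov matrix entry into its PBW expansion and reducing modulo the $\l$-relations. The base weight condition (\ref{base weight}) specifies each node factor $-q^{\eta_\mu(\la)}/[\eta_\mu(\la)]_q$ as an explicit rational function of $q$, and Lemma \ref{q-num} guarantees regularity on a Zariski-open subset of $\C^\circ$ (on all of $\C^\circ$ when $t\in T_\Qbb$). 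A structurally cleaner route is via the extremal projector: by Proposition \ref{projector_is_reg}, $p_\g(\la)$ is well defined on the appropriate weight spaces of $V\tp\hat M_\la$; by (\ref{Shap_proj}), $\phi_\al 1_\la$ can be realized as $(\id\tp\eps_{\hat M_\la})(p_\g(v_a\tp 1_\la))$; the $\omega$-invariance of $p_\g(\la)$ (Proposition \ref{shifted-proj-invar}) then reduces the vanishing to a statement about the contravariant form, which I would verify by pairing against extremal vectors.

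The main obstacle is precisely this vanishing, which is a subtle combinatorial identity among matrix entries of the inverse Shapovalov form, engineered by the specific form of the base weight (\ref{base weight}). The $t$-regularity of the chosen admissible triple (Proposition \ref{Xc_root}) ensures that the leading PBW term of $\phi_\al(\la)$ is the root vector $f_\al$, which is a nonzero basis element of $\hat M_\la$; so the identity encodes how the remaining correction terms arising from intermediate nodes must cancel $f_\al 1_\la$ inside $\hat M_\la$ modulo the left ideal $\sum_{\bt\in\Pi_\l}U_q(\g_-)f_\bt$. I would tackle this root by root for $\al\in\Pi_{\k/\l}$, inducting on $\Gamma_+$-height and using the factorization (\ref{factor-root-degree}) to reduce the general case to an explicit calculation in the admissible fundamental representation.
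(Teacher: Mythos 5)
There is a genuine gap, and it is located exactly at your ``matching lower bound'' step. You claim that the surjection $\hat M_\la \twoheadrightarrow M_\la$ becomes an isomorphism over $\C_1(q)$, i.e.\ that $\phi_\al 1_\la = 0$ in the Levi parabolic Verma module $\hat M_\la$ for every $\al\in\Pi_{\k/\l}$. This is false, and the paper says the opposite explicitly: the leading PBW term of $\phi_\bt(\la)$ is the root generator $f_\bt$, which is a free PBW direction in $\hat M_\la$ (it does not lie in $U_q(\g_-)\sum_{\gamma\in\Pi_\l}f_\gamma$), so $\phi_\bt 1_\la$ is a \emph{nonzero} extremal vector in $\hat M_\la$ for generic $q$. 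Killing precisely these extra extremal vectors is how $M_\la = M_{\la,0}$ is defined as a further quotient of $\hat M_\la$; if they vanished there would be nothing to quotient by and the ``generalized'' construction would be vacuous in the pseudo-Levi case. Consequently, whenever $\l\subsetneq\k$ the modules $\hat M_\la$ and $M_\la$ have genuinely different characters, and your upper and lower bounds cannot match.

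Part of what led you astray is a typo in the statement itself: the index set should be $\Rm^+_{\g/\k}$, not $\Rm^+_{\g/\l}$. The correct content of the lemma is $\Char(M_\la)=\prod_{\al\in\Rm^+_{\g/\k}}(1-e^{-\al})^{-1}e^\la$, consistent with the paper's later identification $\dim M_\la[\mu]=\dim N^-[\mu-\la]$ where $N^-=U(\g_-)/U(\g_-)\k_-$, and with the expectation $\Char M_\la = \Char\C[\g/\k]\cdot e^\la$ stated just before the definition of extremal projectors. Your upper bound computation gives the character of $\hat M_\la$ (product over $\Rm^+_{\g/\l}$), which is strictly larger than $\Char M_\la$ when $\k$ is not Levi, so that estimate by itself does not close the argument.

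The paper's actual proof goes differently: it builds a \emph{modified} PBW system in $U_q(\g_-)$ in which, for compound $\al\in\Rm^+_{\k/\l}$, the root vector is redefined as a $q$-commutator expression in the Shapovalov elements $\phi_\bt(\la)$, $\bt\in\Pi_{\k/\l}$, and the Chevalley generators $f_\gamma$, $\gamma\in\Pi_\l$. One checks that (a) the resulting ordered monomials (with $\Rm^+_\k$ placed on the right) have the same weight-graded cardinality as the standard PBW basis, and (b) they specialize to a genuine PBW basis of $U(\g_-)$ at $q=1$, hence form a basis over $\C_1(q)$. Since the defining ideal of $M_\la$ is generated exactly by the $\phi_\al$, $\al\in\Pi_\k$, the cosets of the monomials in the $\Rm^+_{\g/\k}$-root vectors alone span $M_\la$ and are independent over $\C_1(q)$, giving the character over $\Rm^+_{\g/\k}$. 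If you want to salvage the two-sided-estimate style of your argument, the correct comparison module is not $\hat M_\la$ but the classical quotient $U(\g_-)/U(\g_-)\k_-\simeq\C[\g_-/\k_-]$, and the lower bound should come from the specialization-at-$q=1$ argument, not from a vanishing that does not hold.
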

\begin{proof}
The  Lusztig  generators of the PBW basis are deformations of  classical root vectors. We redefine $f_\al$ for $\al \in \Rm^+_{\k/\l}$
as follows.
The classical root vector $f_\al$ is a composition of commutators among $f_\bt$, $\bt \in \Pi_{\k/\l}$ and
$f_\nu$, $\mu \in \Pi_\l$. Define its quantum counterpart by replacing $f_\bt$ with $\phi_\bt(\la)$,
simple root vectors from $\l_-$ with Chevalley generators of $U_q(\l_-)$, and
classical commutators with $q$-commutators.
By construction, they are deformations of classical $f_\al$ for all $\al \in \Rm^+_\k$.

Order roots so that $\Rm^+_\k$ is on the right of $\Rm^+_{\g/\k}$ and consider the PBW system of ordered monomials
in the negative root vectors modified as above. The cardinality of this system in every weight space equals its dimension, thanks to the
presence of a standard PBW basis in  $U_q(\g_-)$. In the classical limit, this system delivers a PBW basis in $U(\g_-)$,
therefore it is a basis in every weight subspace of $U_q(\g_-)$, for almost all $q$.
Then the monomials in the root vectors with roots from $\Rm^+_{\g/\k}$ deliver a basis in every
weight subspace of $M_\la$, for almost all $q$.
\end{proof}

The rest of the section is devoted to the question of irreducibility of $M_\la$. Suppose that
$G$ is a  connected  simply connected group with Lie algebra $\g$. Let  $K$ be  its closed subgroup
with the Lie algebra $\k$. It is known that $K$ is connected, \cite{Hum}.
Let $\Xi$ denote the set of isomorphism classes of $\g$-modules $V\in \Xi$ with non-zero space of $\k$-invariants.
Such modules are those appearing in
$\C[G/K]$ thanks to Peter-Weyl decomposition, \cite{GW}.   Since $\k$ is reductive, every module enters $\Xi$ along with its dual.
We will use the same notation for  classes of $U_q(\g)$-modules whose quasi-classical counterparts are in $\Xi$.

Denote by $N^\pm $ the quotients $U(\g_\pm)/U(\g_\pm)\k_\pm$.
It follows from  Lemma \ref{base_char} that each subspace in $M_\la$ of weight $\mu+\la$  has dimension of  $N^-[\mu]$
for almost all $q\in\Crte$.
Every $v\in V^\k$ defines a $U(\g_+)$-invariant map $\hat \varphi_v\colon U(\g_+)\to V$, $x\mapsto x\tr v$, that factors through
a map $\varphi_v\colon N^+\to V$.

\begin{lemma}
\label{class_ind_0}
  The intersection of $\ker \varphi_v$ over all $v\in V^\k$ and all $V\in \Xi$ is zero.
\end{lemma}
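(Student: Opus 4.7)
The strategy is to identify $N^+$ with a space of distributions at the basepoint $eK\in G/K$ and to deduce the desired separation from the Peter--Weyl decomposition of $\C[G/K]$.

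First, since $t$ is semisimple, the stabilizer $K$ is connected and reductive, so by Matsushima's theorem $G/K$ is a smooth affine $G$-variety. The coordinate ring admits the Peter--Weyl decomposition $\C[G/K]=\bigoplus_{V\in\Xi}V^\k\otimes V^*$, in which the $V$-isotypic piece is realized by matrix coefficients $m_{v,\psi}(g)=\psi(gv)$ with $v\in V^\k$ and $\psi\in V^*$; the $\k$-invariance of $v$ is precisely what makes $m_{v,\psi}$ descend from $G$ to $G/K$.

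Next, consider the Dirac functional $\delta_{eK}\in\C[G/K]^*$. Since $eK$ is $K$-fixed and $K$ is connected, $k\cdot\delta_{eK}=0$ for every $k\in\k$. Hence $x\cdot\delta_{eK}$ depends only on the class $\bar x\in N^+$ when $x\in U(\g_+)$, giving a linear map $\Phi^+\colon N^+\to\C[G/K]^*$, $\bar x\mapsto x\cdot\delta_{eK}$. Iterating the dual action $(y\phi)(f)=-\phi(yf)$ together with the identity $y\cdot m_{v,\psi}=m_{v,y\psi}$ yields the clean formula $(x\cdot\delta_{eK})(m_{v,\psi})=\psi(xv)$ for $v\in V^\k$, $\psi\in V^*$. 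Consequently $\bar x$ lies in $\bigcap_{V,v}\ker\varphi_v$ if and only if $\Phi^+(\bar x)$ annihilates every matrix coefficient, i.e.\ $\Phi^+(\bar x)=0$.

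The whole proof thus reduces to showing $\Phi^+$ is injective. Using PBW, the composite $N^+\hookrightarrow U(\g)/U(\g)\k\simeq \Sym(\g/\k)$ identifies $N^+$ with $\Sym(\g_+/\k_+)$, and $u\cdot\delta_{eK}$ acts on $f\in\C[G/K]$ as a polynomial differential operator at $eK$ whose principal symbol is the PBW image of $u$. Smoothness of $G/K$ at $eK$, together with the identification $T_{eK}(G/K)\simeq\g/\k$, makes the Taylor--jet map $\C[G/K]\to\Sym^{\le n}((\g/\k)^*)$ surjective for every $n$, so the canonical non-degenerate pairing between $\Sym(\g/\k)$ and $\Sym((\g/\k)^*)$ ensures that every nonzero symbol is detected by some $f$; a standard leading-order argument then forces a nonzero $\bar u$ to produce a nonzero $u\cdot\delta_{eK}$. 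The main point I would focus on is exactly this injectivity, which is where the geometric input (Matsushima's affineness and smoothness of $G/K$, combined with surjectivity of the jet map at a smooth point) is really used; everything else is formal.
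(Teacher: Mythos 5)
Your proof is correct, but it takes a genuinely different route from the paper's. The paper fixes a single finite-dimensional module $V$ and a vector $v\in V^\k$ whose stabilizer is exactly $K$ (guaranteed by [GW], Thm.~11.1.13), so that $g\mapsto gv$ embeds $G/K$ into $V$, and then tests $N^+$ directly against the symmetrized tensor powers $\Sym(V^{\tp m})$ with invariant vector $v^{\tp m}$: applying a degree-$m$ PBW monomial $\prod_{\al\in\Rm^+_{\g/\k}}e_\al^{m_\al}$ to $v^{\tp m}$ produces a leading term $\propto\Sym\bigl(\tp_\al(e_\al v)^{\tp m_\al}\bigr)$ plus terms retaining at least one factor $v$, and these leading terms are checked by hand to be linearly independent (using that $v$ is independent of $\g v$). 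Your argument instead channels everything through the Dirac functional $\delta_{eK}\in\C[G/K]^*$ and reduces the lemma to injectivity of the map $N^+\to\C[G/K]^*$, $\bar x\mapsto x\cdot\delta_{eK}$, which you establish by identifying its associated graded with the inclusion $\Sym(\g_+/\k_+)\hookrightarrow\Sym(\g/\k)$ and invoking surjectivity of the jet map at a smooth point of an affine variety. Both arguments rest on the same underlying geometric fact (the orbit map of a vector with stabilizer $K$ is an immersion at $eK$), but the paper's version is more elementary and self-contained, a concrete leading-order computation inside one $\Sym(V^{\tp m})$, whereas yours is more conceptual, makes the role of smoothness explicit, and would be easier to state in a general homogeneous-space context. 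A minor quibble: Matsushima gives affineness of $G/K$ (needed so that the jet map from $\C[G/K]$ is surjective), but smoothness of $G/K$ is automatic for quotients of algebraic groups in characteristic zero and needs no theorem; also your formula $(x\cdot\delta_{eK})(m_{v,\psi})=\psi(xv)$ is sensitive to sign and order conventions in the dual action and should be verified carefully, though it does come out correct with the standard choices.
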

\begin{proof}
Pick up a module $V\in \Xi$ and a vector $v\in V^K$ such that $K$ is the isotropy subgroup of $v$. Such a pair $(V,v)$ does exist by  \cite{GW}, Theorem 11.1.13.
The coset space $G/K$ is embedded in $V$ via the map
$g\mapsto g v$,  hence $\varphi_v$ yields an embedding $\g_+/\k_+\to V$.
Fix an order on positive roots and consider a PBW monomial $\prod_{\al \in \Rm^+_{\g/\k}}e_\al^{m_\al}$ of degree $m$.
Apply it to the $K$-invariant tensor $v^{\tp m}$ in the symmetrized tensor power $\mathrm{Sym}( V^{\tp m})$.
The result is a symmetrized tensor  $\propto \mathrm{Sym}\bigl(\tp_{\al \in \Rm^+_{\g/\k}}(e_\al v)^{\tp m_\al}\bigr)$ plus  terms
 $\propto \mathrm{Sym}\bigl(v^{\tp k}\tp (\ldots)\bigr)$ with $k>0$. The first term is independent of the remainder, and furthermore,
the images of the PBW monomials of degree $m$ are independent in $V^{\tp m}$.\end{proof}
It follows that for every weight $\mu\in \La(N^+)$ there is $V\in \Xi$
and a $\g_+$-invariant map  $N^+\to V$ generated by $v\in V^\k=V^{\k_+}[0]$ that is injective on $N^+[\mu]$.
We will mimic this situation in the quantum setting.

Set $Z=M_\la$.
Suppose that $u\in V\tp Z$  is an extremal vector such that $v=(\id \tp \eps_Z)(u)\in V$ is not zero.
Define a linear map $\psi_{v}\colon Z\to V$ as $\psi_{v}(z)=u^{1}( u^2,z)$ (in Sweedler notation), for all $z\in Z$.
It factors through  a composition
$Z\to{}^*\!Z\to V$, where the first arrow is the contravariant form
regarded as a linear map from $Z$ to its restricted ($U_q(\h)$-locally finite) right dual ${}^*\!Z$.
\begin{lemma}
\label{part_iso}
  For every element $f\in U_q(\g_-)$ of weight $-\al$, the mapping $\psi_v$ acts by the assignment $\psi_{v}(f1_Z)=q^{-(\la+\mu,\al)}\si(f)v$,
  where $\mu$ is the weight of $v$.
\end{lemma}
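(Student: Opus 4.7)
The plan is to prove this by induction on the height of $\al\in \Gamma_+$, combining contravariance of the Shapovalov form on $Z=M_\la$ with the extremality of $u$.

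First I decompose $u$ by weight, writing $u=\sum_\gamma u^1_\gamma\tp u^2_\gamma$ with $u^1_\gamma\in V[\mu+\gamma]$ and $u^2_\gamma\in Z[\la-\gamma]$. Then $v=u^1_0$, and for $f$ of weight $-\al$ the pairing $\langle u^2_\gamma, f\cdot 1_Z\rangle$ is supported only on $\gamma=\al$, so
$$\psi_v(f\cdot 1_Z) = u^1_\al\,\langle u^2_\al, f\cdot 1_Z\rangle.$$
The base case $\al=0$ is immediate: $\psi_v(1_Z)=u^1_0\,\eps_Z(u^2_0)=v=q^{0}\si(1)v$.

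For the inductive step I factor $f=f_\bt g$ with $\bt\in\Pi_\g$ simple and $g\in U_q(\g_-)$ of weight $-(\al-\bt)$. Two ingredients will combine. First, contravariance of the form with respect to the anti-involution $\omega=\si\gm$ lets me move $f_\bt$ across the pairing:
$$\langle u^2_\al, f_\bt(g\cdot 1_Z)\rangle = \langle \omega(f_\bt)u^2_\al, g\cdot 1_Z\rangle,$$
where $\omega(f_\bt)=-q^{-h_\bt}e_\bt$ acts on $u^2_\al$ as an explicit Cartan-eigenvalue multiple of $e_\bt$ determined by the weight $\la-\al$ of $u^2_\al$. Second, the extremal condition $\Delta(e_\bt)u=0$ restricted to bi-weight $(\mu+\al,\la-\al+\bt)$ yields
$$u^1_\al\tp e_\bt u^2_\al + q^{(\la-\al+\bt,\bt)}(e_\bt u^1_{\al-\bt})\tp u^2_{\al-\bt}=0,$$
so pairing the $Z$-factor with $g\cdot 1_Z$ transfers $e_\bt$ from the $Z$-factor to the $V$-factor. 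After combining these two manipulations, $\psi_v(f\cdot 1_Z)$ becomes $C(\la,\mu,\al,\bt)\cdot e_\bt\,\psi_v(g\cdot 1_Z)$ for an explicit scalar $C$ assembled from the two Cartan eigenvalues.

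Applying the inductive hypothesis $\psi_v(g\cdot 1_Z)=q^{-(\la+\mu,\al-\bt)}\si(g)v$ and the algebra homomorphism property $\si(f_\bt g)=e_\bt\si(g)=\si(f)$, the result collapses to the desired $q^{-(\la+\mu,\al)}\si(f)v$ provided the $q$-powers from the two steps assemble into $C=q^{-(\la+\mu,\bt)}$, which is the calculation that drives the induction. Well-definedness of the asserted right-hand side as a function of $f\cdot 1_Z$ (not just $f$) requires $v\in V^{J^+}$, so that $\si(f)v=0$ whenever $f\in J^-$; this inclusion is a separate, short consequence of extremality, obtained by applying $(\id\tp\eps_Z)$ to the identity $\Delta(\si(f))u=0$ for $\si(f)\in J^+$ and unraveling the result recursively.

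The main obstacle is the precise bookkeeping of $q$-powers: contravariance introduces a factor depending on the weight of $u^2_\al$, extremality introduces another depending on the weight of $u^2_{\al-\bt}$, and one must verify that these telescope to exactly the exponent $-(\la+\mu,\al)$ in every inductive step. Once this scalar identity is pinned down, the induction closes cleanly.
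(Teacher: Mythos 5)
Your strategy---decompose $u$ by weight, move $\omega(f_\bt)$ across the pairing by contravariance, transfer $e_\bt$ to the $V$-factor by extremality, and induct on the height of $\al$---is exactly the paper's approach. But the crux you correctly isolate and then defer, the scalar $C$, does not come out to $q^{-(\la+\mu,\bt)}$. Carrying out the two steps you describe, contravariance contributes the factor $-q^{-(\la-\al+\bt,\bt)}$ (the Cartan eigenvalue of $\omega(f_\bt)=-q^{-h_\bt}e_\bt$ on the weight $\la-\al+\bt$), while your extremality relation $u^1_\al\tp e_\bt u^2_\al=-q^{(\la-\al+\bt,\bt)}\,e_\bt u^1_{\al-\bt}\tp u^2_{\al-\bt}$ contributes $-q^{(\la-\al+\bt,\bt)}$; these cancel exactly, giving $C=1$. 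The induction then yields $\psi_v(f1_Z)=\si(f)v$, with no $q$-power. An explicit check in $U_q\bigl(\s\l(2)\bigr)$ confirms this: taking $Z=\tilde M_\la$, $V$ the two-dimensional module, and the extremal vector $u=[(\la,\al^\vee)]_q\,v_1\tp 1_Z-q^{(\la,\al)}\,v_0\tp f_\al 1_Z$, one gets $v=[(\la,\al^\vee)]_q v_1$ and $\psi_v(f_\al 1_Z)=[(\la,\al^\vee)]_q\,v_0=\si(f_\al)v$, whereas the stated formula predicts an extra factor $q^{-(\la,\al)+1}$.

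The paper's own proof has the same discrepancy: the Hopf identity for an invariant $u$ gives $(1\tp q^{-h_\al}e_\al)u=\bigl(\gm(q^{-h_\al}e_\al)\tp 1\bigr)u=-(e_\al\tp 1)u$, which differs from the asserted $-(\gm(e_\al)\tp q^{-h_\al})u$ by precisely $q^{-(\la+\mu,\al)}$. So either there is an unstated Cartan twist in the definition of the contravariant form that would resupply the exponent (neither your proposal nor the paper specifies one), or the exponent in the lemma's statement is spurious. This is harmless downstream, since the irreducibility argument for $M_\la$ only uses $\psi_v$ up to a nonzero scalar; but as a proof of the lemma as stated, your induction cannot close, and the cancellation shows that the telescoping you invoke does not assemble to $-(\la+\mu,\al)$. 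The scalar bookkeeping is not a technicality to be deferred here---it is the entire content of the inductive step, and performing it reveals the discrepancy.
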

\begin{proof}
  It is sufficient to prove the equality for $f$ a monomial in Chevalley generators.
For simple $\al \in \Pi$ one has
$$
\bigl(1\tp \omega(f_\al )\bigr)u=-(1\tp q^{-h_\al}e_\al )u=-\bigl(\gm(e_\al)\tp q^{-h_\al} \bigr)u=(e_\al q^{-h_\al}\tp q^{-h_\al} )u
=\bigl(\si(f_\al) \tp 1\bigr)q^{-h_\al}u.
$$
This implies $\bigl(1\tp \omega(f)\bigr)u=q^{-(\la+\mu,\al)}\bigl(\si(f)\tp 1\bigr)u$ for all $\al$ and all monomial $f$,
by induction on degree of $f$.
Now the proof is immediate as $\psi_v(f1_Z)= (\id \tp \eps_Z) \Bigl(\bigl(1\tp \omega(f_\al )\bigr)(u)\Bigr)$.
\end{proof}
In other words, $\psi_v$ is a homomorphism of $U_q(\g_-)$-modules. Remark that the vector $v$ is not an arbitrary element
from $\Hom_{U_q(\g_-)}(Z,V)$, but one originating from an extremal vector in $V\tp Z$. This fact is crucial because only
then the homomorphism  $Z\to V$, $f1_Z\mapsto q^{-(\la+\mu,\al)}\si(f)v$, factors through a homomorphism ${}^*\!Z\to V$.  Irreducibility of $Z$ will be proved if the map $Z\to {}^*\!Z$
is shown to have
no kernel and therefore be an isomorphism. It is sufficient to check it only for those weight spaces where
extremal vectors in $Z$ may appear. That is, on the orbit of highest weight $\zt$ of $Z$ under the affine action
$\zt\mapsto w.\zt=w(\zt+\rho)-\rho$ of the Weyl group $\mathrm{W}\ni w$.

The following theorem is one of our main results. We rely in its proof  on the special case of Proposition \ref{extremal_dim} for $\xi=0$.
\begin{thm}
  The base module $M_\la$ is irreducible for almost all $q\in \Crt$.
  \label{base_module_irred}
\end{thm}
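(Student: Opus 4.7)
The plan is to show that the canonical map $Z \to {}^*Z$ induced by the contravariant form on $Z = M_\la$ is injective for almost all $q$, which is equivalent to irreducibility of $M_\la$. As observed in the paragraph preceding the theorem, since extremal vectors of a proper submodule of $Z$ can only occur at weights on the Weyl orbit $\{w.\la\}_{w\in \mathrm{W}}$ under the dot action, it suffices to verify injectivity of $Z[\nu] \to {}^*Z[\nu]$ for $\nu = w.\la$. Because $\mathrm{W}$ is finite, this reduces the task to a finite list of finite-dimensional weight spaces, with dimensions controlled by Lemma \ref{base_char}.

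Fix one such critical weight $\nu = \la - \eta$ with $\eta \in \Ga_+$. By Lemma \ref{part_iso}, every extremal vector $u \in (V\tp Z)^+$ with $v = (\id \tp \eps_Z)(u) \neq 0$ supplies a $U_q(\g_-)$-homomorphism $\psi_v \colon Z \to V$ that factors through the contravariant form map $Z \to {}^*Z$. Joint injectivity of the family $\{\psi_v|_{Z[\nu]}\}$ as $V$ ranges over $\Xi$ and $v$ over a sufficiently rich set would yield the required injectivity on $Z[\nu]$. To produce such vectors I would apply the extremal projector to $v_0 \tp 1_Z$ for $v_0 \in V^+_0$, the quantum counterpart of $V^\k$: by Proposition \ref{projector_is_reg} (applied with $\xi = 0$, which is legitimate since $M_\la$ is locally finite over $U_q(\l)$), the element $p_\g(v_0 \tp 1_Z)$ is a well-defined extremal vector in $V \tp Z$ for almost all $q$, with $(\id \tp \eps_Z)\bigl(p_\g(v_0 \tp 1_Z)\bigr) = p_\g(\nu) v_0 \neq 0$ on the generic stratum. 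Proposition \ref{extremal_dim} specialised to $\xi = 0$ guarantees that this construction exhausts the extremal vectors of $V \tp Z$ of the relevant weight.

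The concluding step is a classical-limit deformation argument. By Lemma \ref{part_iso}, $\psi_{v_0}(f 1_Z)$ is a scalar multiple of $\si(f) v_0 = \omega(f) v_0$. Under the classical identification $Z^{cl} \simeq U(\g_-)/U(\g_-)\k_- = N^-$ (valid because, at $q=1$, the generalised parabolic relations for $\xi = 0$ collapse to the left ideal generated by $f_\al$, $\al \in \Pi_\k$, in view of Definition \ref{regular ip} and Lemma \ref{base_char}), the assignment $f 1_Z \mapsto \si(f) v_0$ is dual, via the natural pairing $N^- \times N^+ \to \C$, to the classical intertwiner $\varphi_{v_0} \colon N^+ \to V$. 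By Lemma \ref{class_ind_0}, the collection $\{\varphi_{v_0}\}_{V,\, v_0}$ has zero common kernel on every weight space of $N^+$; hence its adjoints jointly inject $Z^{cl}[\nu] \simeq N^-[-\eta]$ into the direct sum of the $V$'s. Joint injectivity of a finite family of linear maps between finite-dimensional vector spaces is a Zariski-open condition on $q$, so it fails only on a finite subset $\mathcal{E}_\nu \subset \C^\circ$. Taking the finite union $\mathcal{E} = \bigcup_{w \in \mathrm{W}} \mathcal{E}_{w.\la}$ together with the exceptional sets of Proposition \ref{projector_is_reg} and Lemma \ref{base_char} yields a finite set outside of which $M_\la$ is irreducible.

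The hardest step, I expect, will be ensuring that the quantum subspaces $V^+_0$ and the quantum maps $\psi_{v_0}$ truly degenerate to the classical data $V^\k$ and $\varphi_{v_0}$ without collapsing in dimension; this is precisely where Proposition \ref{extremal_dim} at $\xi = 0$ is indispensable, for otherwise the quantum extremal spaces might be strictly smaller than their classical counterparts, and Lemma \ref{class_ind_0} would not be directly applicable. Once this dimension matching is secured, the remainder is a uniform semicontinuity argument of the sort routinely controlling irreducibility along a flat deformation.
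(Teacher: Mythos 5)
Your proposal follows essentially the same route as the paper: reduce to the finite set of critical weights $\mathrm{W}.\la$, manufacture the maps $\psi_v$ from extremal vectors produced by the shifted extremal projector (Proposition \ref{projector_is_reg} and Proposition \ref{extremal_dim} at $\xi=0$), and conclude by degenerating to the classical intertwiners $\varphi_{v_0}$ whose joint kernel vanishes by Lemma \ref{class_ind_0}. The one small slip is the shift in the projector: the relevant operator is $p_\g(\la)$ (shift by the highest weight of $Z=M_\la$, cf. Lemma \ref{regularization_proj}), not $p_\g(\nu)$; and note that the paper streamlines the endgame slightly by using the dimension equality of Lemma \ref{base_char} to show a single $\psi_v$ is already an isomorphism on $M_\la[\mu]$, rather than invoking joint injectivity over a family of $V$'s.
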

\begin{proof}
  It is sufficient to prove that $M_\la$ has no extremal vectors or, equivalently, the contravariant form of $M_\la$ is non-degenerate on $M_\la[\mu]$
  with
  $\mu\in \mathrm{W}.\la$. Let $N^+_q\simeq M'_\la$ denote the quotient of $U_q(\g_+)$ by the left ideal $\J^+$ annihilating the lowest vector
   in module $M'_\la$.
By Lemma \ref{base_char},
$$
\dim N^+_q [\la-\mu]= \dim N^+[\la-\mu]= \dim M'_\la[-\mu]=\dim M_\la[\mu]
$$
for  almost all $q$.
 Pick up a module $V\in \Xi$ such that the weight space $N^+[\la-\mu]$
  is embedded in $V$  in the classical limit via some $\varphi_{v_0}$.
   Proposition \ref{extremal_dim} for $\xi=0$ below guarantees that,
   for almost all $q$, there is a non-zero vector $v= p_\g(\la)v'$, $v'\in V[0]$, deforming $v_0$, such that $u = p_\g(v'\tp 1_\la)$
   is extremal and $v=(\id \tp \eps_{M_\la})(u)$.
      Then $N^+_q[\la-\mu]$ is embedded in $V$ and the map $\psi_v\colon M_\la[\mu]\to N^+_q[\la-\mu]v$
      is an isomorphism for almost all $q$. Since $\mu$ runs over a finite set $\mathrm{W}.\la$, the assertion follows.
\end{proof}
 \subsection{Generalized parabolic Verma modules}
In this section we study the $\Fin_q(\g)$-module category generated by the base module.
As before, $t\in T$ and $\k\subset \g$ is the centralizer of $t$.
Pick up a base weight $\la$ for $t$, a $\k$-dominant weight $\xi\in \La^+_\k$ and set  $\zt=\la+\xi$. Denote
$m_\al=(\xi,\al^\vee)+1\in \N$, for all $\al\in \Pi_\k$.

Let $V$ be a $\g$-module considered as a $\k$ module by restriction.
For a irreducible finite dimensional $\k$-module $X$ set $V^+_X$ to be  $\Hom_{\k_+}(X^*,V)$.

\begin{propn}
 \label{extremal_dim}
Suppose that all weight of  $V\in \Fin_q(\g)$ are multiplicity free. Then for almost all $q\in \Crt$, 
the $U_q(\h)$-modules  $V^+_\xi$ are flat deformations of  $V_X^+$
 and  the operator $p_\g(\zt)\colon \widehat{V^+_\xi}\to {V^+_\xi}$ is surjective.
\end{propn}
\begin{proof}
Denote by $W=\sum_{\mu\in \La(V^+_X)}V[\mu]\subset V$ the sum of all weight spaces whose weights belong to $\La(V^+_X)$.
Thanks to Proposition  \ref{projector_is_reg} the operator $p_\al(\zt)$ is well defined in  $W$ and ranges in $V^+_\xi$
due to Lemma  \ref{regularization_proj}.

Consider the eigenvalues of the foot factors $p_\al(\zt)$, given by (\ref{theta igenvalues}),
on the subspace of weight $\mu\in \La(W)$.
If $\al \in \Rm^+_{\g/\k}$, then neither enumerators  
$\propto [(\la+\xi+\mu+\rho,\al^\vee)+k]_{q_\al}$,
nor denominators $\propto [(\la+\xi+\rho,\al^\vee)+k]_{q_\al}$ turn zero at $q\in \Crt$
By Lemma \ref{q-num}, 1).
We are left to work out the case of $\al\in \Rm^+_\k$.

In the classical limit, all factors  $p_\al(\zt)$ with  $\al \in \Rm^+_{\g/\k}$ turn to identical operator on $V^+_X$.
Then only the factors with $\al\in \Rm^+_\k$ are left, and the  operator $p_\g(\zt)$
turns to  $p_\k(\xi)$ (a normal order on  $\Rm^+_\g$ induces a normal order on $\Rm^+_\k\subset \Rm^+_\g$). But this is the inverse extremal twist $\Theta_{V,X}$ 
for the finite dimensional $\k$-module $X$, which is invertible because of complete reducibility of finite dimensional 
$\k$-modules. Thus all factors  (\ref{theta igenvalues})
are regular in $q$ and do not vanish in some neighbourhood of $q=1$, and thence for all $q\in \Crt$,
by Lemma \ref{q-num}, 2).

We are left to prove that 
$W=V^+_\xi$ except  maybe for a finite subset of  $q\in \Crt$ independent of $\xi$.
Since kernels do not increase in deformation, $ V^+_\xi\subset W$ for almost all $q\in \Crt$. On the other hand, as we proved,
$V^+_\xi\supset p_\g(\zt)W=W$ for all $q\in \Crt$, whence
$V^+_\xi\supset W$ for all and   $V^+_\xi=W$ for almost all   $q\in \Crt$.
As the set of weights in $V$ is finite,  $W$ can be smaller than $V$ only for a finite number of $\xi$
(Shapovalov elements of high degree annihilate all  $V$).
Hence the finite set of  $q\in \Crt$, where equality 
$W=V^+_\xi$ is violated can be chosen independent of $\xi$.
\end{proof}

Recall that the category $\O_q$  consists of finitely generated $U_q(\g)$-modules that are  $U_q(\h)$-diagonalizable and   $U_q(\g_-)$-locally finite.
Tensor product with finite dimensional modules preserves $\O_q$ and makes it a $\Fin_q(\g)$-module category.
Our goal is to study a $\Fin_q(\g)$-module subcategory $\O_q(t)\subset \O_q$ associated with a point $t\in O\cap T$.
Objects of $\O_q(t)$ will be interpreted as "representations" of vector bundles over
the quantized $\C[O]$ provided $\O_q(t)$ is semi-simple, which is the main question to answer.
\begin{definition}
  A full subcategory $\O_q(t)$ of the category $\O_q$ whose objects are  $U_q(\g)$-submodules in  $V\tp M_\la$,
  where $V\in \Fin_q(\g)$, is called generalized parabolic category of the point $t$.
\end{definition}
Category $\O_q(t)$ is obviously additive and stable under tensor product with modules from $\Fin_q(\g)$ by construction.
Although $\O_q(t)$ apparently  depends on $\la$, a different choice of $\la$ results in an isomorphic category,
so we suppress the base weight from notation.

Note that for different points $t\in T\cap O$ the categories $\O_q(t)$ are different although they will be shown equivalent for  almost all $q\in \Crt$.

Fix $V$ to be the fundamental module of minimal dimension for special linear and symplectic $\g$.
For orthogonal $\g$ let $V$ be a fundamental spin module. In all cases, weight subspaces in $V$ have dimension 1.
The simply connected group $G$ with Lie algebra $\g$ is faithfully represented
 in $\End(V)$, and all equivariant vector bundles on $O$ are generated by the  vector sub-bundles appearing in
 $O\times V$.

Let  $L(\mu)$ denote the irreducible $U_q(\g)$-module of highest weight $\mu\in \h^*$.
\begin{thm}
\label{semisimplicity}
For each $t\in T$ and almost all $q\in \Crt$ the following holds true:
\begin{enumerate}
\item
The category  $\O_q(t)$ is semi-simple.
\item
Simple objects in $\O_q(t)$ are exactly   $L(\la+\xi)$  with $\xi\in \La^+_{\k}\cap \La_\g$.
\end{enumerate} 
\end{thm}
\begin{proof}
  Since every finite-dimensional $U_q(\g)$-module is a submodule in a tensor power of $V$, we will prove 1) if we do it
  for  all $V^{\tp m}\tp M_\la$ using induction on  $m\in \Z_+$ (for $m=0$ this is Theorem \ref{base_module_irred}).
  Assuming  that  $V^{\tp m}\tp M_\la$ is completely reducible and its all simple submodules
  are $L(\la+\xi)$ such that $X_\xi\subset V^{\tp m}$ we will prove 1) for each $V\tp L(\zt)$, $\zt=\la+\xi$, as the induction transition.

  Suppose that we did it for some $m\geqslant 0$.
  Let  $V^{\tp m}\tp M_\la=\op_i L(\zt_i)$ with  $\zt_i=\la+\xi_i$ be an irreducible decomposition and set $\zt$ to one of $\zt_i$. Then complete reducibility of $V\tp L(\zt)$ is a consequence of Proposition \ref{extremal_dim}, 1) and Corollary \ref{corollary_irred}.

  It is clear from Proposition \ref{extremal_dim},1) that every module $L(\la+\xi)$ with $\xi\in \La^+_{\k}\cap \La_\g$ appears in $V^{\tp m}\tp M$ for some $m$ because $X_\xi$ appears in some
  $V^{\tp m}$.  This proves 2)..
\end{proof}
It follows that  the category $\Q_q(t)$ is semi-simple at those $q\in \Crt$ where 
the dimensionality of the generalized extremal spaces  
equals classical and  the base module is irreducible.
We denote by $\Omega_t\subset \Crt$ the set of such  $q$. It is open with respect to the topology
 induced by the Zariski topology on  $\C$.

\begin{propn}
\label{pseudo-parabolic-irreducible}
For all $q\in \Omega_t$ and all $\xi\in \La^+_{\k}\cap \La_\g$,
  $\Char\bigl(L(\la+\xi)\bigr)=\Char(X_\xi)\Char(M_\la)$.
\end{propn}
 \begin{proof}

 Consider $M_{\la,\xi}$ as a $U_q(\g_-)$-module. It the classical limit, it  goes to the quotient of $U(\g_-)$ by the left ideal generated by the annihilator
of the highest vector in $X_\xi$.
Therefore
$$
\Char\bigl(L(\la+\xi)\bigr)\leqslant \Char (M_{\la, \xi})\leqslant \Char(X_\xi) \Char(\g_-/\k_-)e^{\la}=\Char(X_\xi)\Char(M_\la)
$$
over $\C_1(q)$
meaning inequality of each weight space dimension for $q$ from a punctured neighbourhood of $1$ (which depends on the weight space in general).

Suppose that the statement is true for all  $L(\la+\xi)\subset V^m\tp M_\la$ with some $m\geqslant 0$ (that is obviously so  for $m=0$).
The direct sum decomposition $V\tp L(\la+\xi)=\sum_i L(\la+\xi_i)$
implies
\be
\Char(V) \Char\bigl(L(\la+\xi)\bigr)&=&\sum_i\Char\bigl(L(\la+\xi_i)\bigr)
    \leqslant\sum_i\Char (M_{\la,\xi_i}) \leqslant
   \nn\\
    &\leqslant& \sum_i\Char(X_{\xi_i})\Char(M_\la)=\Char(V)\Char(X_\xi)\Char(M_\la)
\label{car-eq}
\ee
over $\C_1(q)$ because $V\tp X_\xi=\sum_i X_{\xi_i}$. Therefore the inequalities are all equalities.
Furthermore, for each $i$ and each weight $\mu$ an equality
\be
\dim L(\zt_i)[\mu] =\dim M_{\la,\xi_i}[\mu]=\dim (X_{\xi_i}\tp M_\la)[\mu]
\label{eq_char}
\ee
holds for almost all $q\in \Omega_t$.
But then $\Char \bigl(L(\zt_i)\bigr)\geqslant \Char(X_{\xi_i})\Char(M_\la)$ for all $q\in \Omega_t$ as $L(\zt_i) $ is
a quotient of a Verma module, which is flat at all $q\in \Crte\supset \Crt$.
If the inequality is strict for some $i$, then
\be
\Char(V) \Char\bigl(L(\la+\xi)\bigr)&=&\sum_i\Char \bigl(L(\la+\xi_i)\bigr)
 >\sum_i\Char\bigl(X_{\xi_i}\bigr)\Char(M_\la).
\nn
\ee
But this is impossible because the left- and right-most terms are equal to $\Char(V)\Char(X_\xi)\Char(M_\la)$.
Therefore $\Char \bigl(L(\zt_i)\bigr) = \Char(X_{\xi_i})\Char(M_\la)$ in  $\Omega_t$.
This is true  for
all $\xi_i$ such that
$X_{\xi_i}\subset V^{\tp(m+1)}$ and therefore for all $\xi \in \La_{\k}^+\cap \La_\g$, by induction.
This completes the proof.
 \end{proof}
Now we describe the irreducible submodules in $\O_q(t)$ and show that they are essentially generalized parabolic Verma
modules.

\begin{corollary}
\label{pseudo-parabolic-irreducible1}
  For each $q\in \Crt$ and every  $\xi\in \La^+_{\k}\cap \La_\g$ the module $M_{\la,\xi}$ is irreducible, and
  $\Char(M_{\la,\xi})=\Char(X_\xi)\Char(M_\la)$.
 \end{corollary}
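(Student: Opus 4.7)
The plan is to squeeze $M_{\la,\xi}$ between two modules whose characters have already been shown to coincide in Proposition \ref{pseudo-parabolic-irreducible}. First I would observe that $L(\la+\xi)$ is a quotient of $M_{\la,\xi}$: the extremal vectors $\phi_\al^{m_\al}1_{\la+\xi}$ for $\al\in \Pi_\k$ used to define $M_{\la,\xi}$ are proper extremal vectors in the Verma module $\tilde M_{\la+\xi}$, hence lie in its maximal submodule, so $L(\la+\xi)$ factors through $M_{\la,\xi}$. This yields $\Char\bigl(L(\la+\xi)\bigr)\leqslant \Char(M_{\la,\xi})$.

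Next I would establish the upper bound $\Char(M_{\la,\xi})\leqslant \Char(X_\xi)\Char(M_\la)$. Regarded as a $U_q(\g_-)$-module, $M_{\la,\xi}$ is the quotient of $U_q(\g_-)$ by the left ideal $J^-_\xi$ generated by $\phi_\al^{m_\al}$ for $\al\in \Pi_\k$. Thanks to the quasi-classical behaviour established in Section \ref{Sec_Sing_v_parab}, each $\phi_\al^{m_\al}$ is regular at $q=1$ and specializes to $f_\al^{m_\al}$, so the classical limit of $M_{\la,\xi}$ surjects onto the classical parabolic module of highest weight $\la+\xi$ induced from $X_\xi$. Combined with Lemma \ref{base_char} (applied to $\l$ replaced by the Levi part that controls the PBW denominator) this gives the desired bound over $\C_1(q)$, and a standard flatness argument for quotients of the Verma module transfers it to a Zariski-open subset of $\C^\circ$.

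Finally, Proposition \ref{pseudo-parabolic-irreducible} tells us that the two outer terms $\Char\bigl(L(\la+\xi)\bigr)$ and $\Char(X_\xi)\Char(M_\la)$ agree on an open set $\Omega_\xi\subset \C^\circ$. The squeeze then forces $\Char(M_{\la,\xi})=\Char\bigl(L(\la+\xi)\bigr)$ on $\Omega_\xi$, so the natural epimorphism $M_{\la,\xi}\to L(\la+\xi)$ is character-preserving and hence an isomorphism there. In particular $M_{\la,\xi}$ is irreducible for almost all $q$, with character $\Char(X_\xi)\Char(M_\la)$.

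The only non-routine step is the second one: one has to check that the upper bound, most naturally obtained as an equality over $\C_1(q)$ by a PBW argument in the spirit of Lemma \ref{base_char}, actually transfers to generic $q\in \C^\circ$. This is precisely where the quantizability of $t$ in the sense of Definition \ref{regular ip}, proved for all non-exceptional $\g$ in Section \ref{Sec_Sing_v_parab}, is essential; the exceptional locus to be excluded is then exactly the one already produced in Proposition \ref{pseudo-parabolic-irreducible}, so no further shrinking of $\Omega_\xi$ is required.
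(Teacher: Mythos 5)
Your squeeze strategy (surjection $M_{\la,\xi}\to L(\la+\xi)$ plus an upper bound by the classical parabolic character, then pinch with Proposition \ref{pseudo-parabolic-irreducible}) is morally the same chain of inequalities the paper itself works out inside the proof of Proposition \ref{pseudo-parabolic-irreducible}, but the paper's actual proof of the corollary goes through the contravariant form rather than a character count, and this is not a stylistic choice -- it is what makes the argument uniform in the weight.

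The gap is in your second step. Writing $\dim M_{\la,\xi}[\mu] = \dim\tilde M_{\la+\xi}[\mu] - \dim N[\mu]$, where $N$ is the submodule generated by the $\phi_\al^{m_\al}1_{\la+\xi}$, the quantity $\dim N[\mu]$ is the rank of a matrix with rational-function entries, hence \emph{lower} semicontinuous in $q$; so $\dim M_{\la,\xi}[\mu]$ is \emph{upper} semicontinuous, i.e.\ it can only jump \emph{up} outside a finite exceptional set $E_\mu$. The bound $\dim M_{\la,\xi}[\mu]\leqslant \dim(X_\xi\tp M_\la)[\mu]$ therefore holds only for $q\notin E_\mu$, and there is no ``standard flatness argument'' that forces $\bigcup_\mu E_\mu$ to be finite: quantizability of $t$ guarantees regularity of the $\phi_\al^{m_\al}$, but says nothing about whether the rank of the ideal they generate drops only in a weight-independent locus. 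So the assertion that ``no further shrinking of $\Omega_\xi$ is required'' is not justified, and without it the claimed equality $\Char M_{\la,\xi}=\Char L(\la+\xi)$ \emph{on} $\Omega_\xi$ does not follow. (Contrast with the paper's treatment of $\Char L$ inside Proposition \ref{pseudo-parabolic-irreducible}, where the inequality is transferred to \emph{all} $q$ precisely because $\dim L[\mu]$ is a rank of the Gram matrix and hence lower semicontinuous -- the semicontinuity cuts the right way there but the wrong way for $M_{\la,\xi}$.)

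The missing idea, which is the whole content of the paper's short proof, is that irreducibility of a highest-weight module is detected on the \emph{finite} set of weight spaces whose weights lie in the shifted Weyl orbit $\mathrm{W}.\!(\la+\xi)$: one only has to know that the contravariant form is non-degenerate there, equivalently that the canonical surjection $M_{\la,\xi}\to L(\la+\xi)$ is injective on those weight spaces. Then only the finitely many exceptional sets $E_\mu$ with $\mu\in \mathrm{W}.\!(\la+\xi)$ enter, their union is finite, and (5.2) from Proposition \ref{pseudo-parabolic-irreducible} gives the dimension match there. Once irreducibility is secured for almost all $q$, the identity $M_{\la,\xi}=L(\la+\xi)$ together with Proposition \ref{pseudo-parabolic-irreducible} gives the character formula. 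If you insert the Weyl-orbit reduction before your squeeze, your argument closes; without it, it does not.
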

 \begin{proof}
The module  $L(\la+\xi)$ is a quotient of  $M_{\la,\xi}$, so it suffices to check that their infinitesimal 
characters are equal. We have checked equality of weight spaces in  (\ref{eq_char}) for each weight 
and almost all $q\in \Omega_t$ and then extended it for all $q\in \Omega_t$. 
This proves irreducibility of 
$M_{\la,\xi}$,
the the character formula follows from Proposition  \ref{pseudo-parabolic-irreducible}.
 \end{proof}

\section{Quantization of associated vector bundles}
\label{Sec_star_prod}
A construction of equivariant star product on homogeneous spaces with Levi isotropy subgroups was discovered  about twenty years ago
\cite{AL,DM,EE,EEM}.
It was employing dynamical twist, or equivalently, the inverse invariant pairing between parabolic base module $M_\la$ and its opposite $M_\la'$.
A  lift of the form to $U_q(\g_+)\tp U_q(\g_-)$  delivers a  quasi-Hopf algebra twist of $U_q(\g)$, \cite{D2}.
A coherent twist of its dual algebra of functions on the quantum group turns out to be associative on the subspace of $U_q(\k)$-invariants.

Algebraically this construction works in a more general setting than parabolic Verma modules, \cite{KST},
however there is problem of the size of "$\k$-invariants" in the absence of the quantum subgroup $U_q(\k)\subset U_q(\g)$.
We worked it out for even quantum spheres in \cite{M3} through harmonic analysis on the  quantum Euclidean plane.
In this section we extend that result for all conjugacy classes $O(t)$, $t\in T$. Moreover,
we put it in a more general context of quantum vector bundles, in development of the approach of \cite{DM}.
Furthermore, we obtain an explicit presentation of the star product by expressing it through the extremal projectors, similarly to
\cite{M2}.
\subsection{Equivariant star product}
Let $\Tc$ be the Hopf algebra quantization of $\C[G]$ along the Drinfeld-Sklyanin Poisson bracket, \cite{FRT}.
It is known to be a local star product (the multiplication is delivered by a bi-differential operator)  \cite{T,EK}.
The quantum group  $U_q(\g)$ enjoys a two-sided action on $\Tc$ by left and right translations.
The Peter-Weyl decomposition splits $\Tc$  in the direct sum $\op_{[V]} V^*\tp V$  over all equivalence classes of irreducible finite-dimensional $U_q(\g)$-modules.
The structure of a $U_q(\g)$-bimodule descends from a realization of $\Tc$ by matrix elements of representations: $x\tr (v^*\tp v)= (v^*\tp xv)$ and $( v^*\tp v) \tl x= (v^*x\tp v)$, where we assume the natural right action on the dual space by transposition.
In terms of Hopf pairing between $\Tc$ and $U_q(\g)$ they can be  written as
$$
x\tr a=a^{(1)}(a^{(2)},x), \quad a\tl x=(a^{(1)},x)a^{(2)}
$$
for all  $a\in \Tc$ and $x\in U_q(\g)$. This implies that $\Tc$ is a module algebra with respect to the left action $\tr$.
The opposite multiplication on $\Tc$ is equivariant with respect to the left action $x\diamond a= a\tl \gm(x)$.

With every finite-dimensional irreducible $\k$-module $X\in \Fin(\k)$ one associates an equivariant vector bundle over the base
$O$ with fiber $X$.  With a realization of $\C[O]$ as the subalgebra of $\k$-invariants  in $\C[G]$ under the action $\tr$ (the classical limit of),
the $\C[O]$ -module of global sections can be realized as
$\Hom_\k(X^*, \Tc)$, under the left multiplication in $\C[G]$.
This picture will be quantized in this section.

  \begin{propn}
 \label{multiplicities}
  For every $V\in \Fin_q(\g)$ and for all $\xi, \eta \in \La^+_{\k}$,
  an  isomorphism
  $$\Hom_{U_q(\g)}(M_{\la,\eta},V\tp M_{\la,\xi})\simeq\Hom_\k(X_{\eta},V\tp X_{\xi})$$
    holds at all $q\in \Omega_t$.
\end{propn}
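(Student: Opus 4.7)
The plan is a two-step reduction: first a multiplicity count via Schur's lemma and semisimplicity, then a comparison with the classical $\k$-module decomposition via character matching. For $q$ outside a finite set depending on $V$, $\xi$, $\eta$, Corollary \ref{pseudo-parabolic-irreducible1} ensures that $M_{\la,\eta}$ is irreducible, while Theorem \ref{semisimplicity} gives a decomposition $V\tp M_{\la,\xi}\simeq \bigoplus_j L(\la+\xi_j)$ with each $L(\la+\xi_j)\simeq M_{\la,\xi_j}$. Schur's lemma then yields
\[
\dim \Hom_{U_q(\g)}(M_{\la,\eta}, V\tp M_{\la,\xi}) \;=\; \#\{j : \xi_j = \eta\}.
\]

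Next I would identify this multiplicity with the classical one. The character identity $\Char(M_{\la,\xi'}) = \Char(X_{\xi'})\Char(M_\la)$ from Corollary \ref{pseudo-parabolic-irreducible1}, combined with the decomposition above, gives
\[
\Char(V)\Char(X_\xi)\Char(M_\la) \;=\; \sum_j \Char(X_{\xi_j})\Char(M_\la).
\]
By Lemma \ref{base_char}, $\Char(M_\la) = e^\la\prod_{\al\in\Rm^+_{\g/\l}}(1-e^{-\al})^{-1}$ is a unit in the natural formal completion, so it may be cancelled to give $\Char(V\tp X_\xi) = \sum_j \Char(X_{\xi_j})$. Classically this forces $V\tp X_\xi \simeq \bigoplus_j X_{\xi_j}$ as $\k$-modules, so $\#\{j : \xi_j = \eta\} = \dim\Hom_\k(X_\eta, V\tp X_\xi)$.

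Combining the two identities yields the claimed isomorphism of finite-dimensional $\C$-vector spaces. Rather than a genuine obstacle, the only remaining care---since all the heavy lifting was done in Sections 3--5---is to verify that the finite exceptional sets of $q$ coming from Theorem \ref{semisimplicity} and Corollary \ref{pseudo-parabolic-irreducible1} (one per isotypic type $\xi_j$ appearing in $V\tp X_\xi$) assemble into a single finite set depending on $V$, $\xi$, $\eta$; this is automatic since $V$ is finite-dimensional and only finitely many $\xi_j$ arise.
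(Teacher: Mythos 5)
Your proof is correct and takes essentially the same route as the paper: decompose $V\tp M_{\la,\xi}$ into irreducibles via Theorem \ref{semisimplicity}, use the character identity $\Char(M_{\la,\xi'})=\Char(X_{\xi'})\Char(M_\la)$ from Corollary \ref{pseudo-parabolic-irreducible1}, cancel $\Char(M_\la)$, and deduce the matching $\k$-module decomposition $V\tp X_\xi\simeq\bigoplus_j X_{\xi_j}$. The only cosmetic difference is that you make the Schur's-lemma multiplicity count, the formal invertibility of $\Char(M_\la)$, and the assembly of exceptional $q$-sets explicit, where the paper leaves them implicit.
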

\begin{proof}
Corollary \ref{pseudo-parabolic-irreducible1} implies an equality $\Char (V\tp M_{\la,\xi})=\sum_{\eta \in I}\Char (M_{\la,\eta})$, where the summation is over weights in $V^{J_\xi^+}$ counted with multiplicities
(they parameterize an irreducible decomposition of $V\tp M_{\la,\xi}$). This equality
implies $\Char (V\tp X_\xi)=\sum_{\eta \in I }\Char (X_\eta)$, by the same corollary.
Therefore  the $\k$-module $\op_{\eta\in I}X_{\eta}$ is isomorphic to $V\tp X_{\xi}$
and the assertion follows.
\end{proof}
\noindent
It follows that for $t$ of finite order the set of exceptional $q$ can be chosen independent of $\xi,\eta$ and $V$.

We derive the following description of  isotypic components in $\Hom(M_{\la,\eta},M_{\la,\xi})$.
\begin{corollary}
\label{k-types}
  For every $V\in \Fin_q(\g)$ and for all $\xi, \eta \in \La^+_{\k}$,
  $$\Hom_{U_q(\g)}\bigl(V^*,\Hom(M_{\la,\eta},M_{\la,\xi})\bigr)\simeq\Hom_\k(X_\eta,V\tp X_\xi)$$
    holds at all $q\in \Omega_t$.
 \end{corollary}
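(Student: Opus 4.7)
The plan is to derive the stated isomorphism by chaining two instances of the standard tensor-hom adjunction with Proposition \ref{multiplicities}, which already provides the version of the statement where the external $V^*$-factor has been absorbed into the target.

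I first fix interpretations. Throughout, $\Hom(M_{\la,\eta},M_{\la,\xi})$ denotes the $U_q(\g)$-locally finite part of $\Hom_\C(M_{\la,\eta},M_{\la,\xi})$ endowed with the adjoint action $(x\tr f)(m)=x_{(1)}f\bigl(\gm(x_{(2)})m\bigr)$, as described in the abstract. Because $V^*\in \Fin_q(\g)$ is finite-dimensional and $U_q(\h)$-diagonalizable, and because both $M_{\la,\eta}$ and $M_{\la,\xi}$ are weight modules with finite-dimensional weight spaces (being modules of highest weight), any $U_q(\g)$-equivariant map out of $V^*$ lands automatically in the locally finite part, and the standard tensor-hom adjunction gives
$$
\Hom_{U_q(\g)}\bigl(V^*,\Hom(M_{\la,\eta},M_{\la,\xi})\bigr)\simeq \Hom_{U_q(\g)}\bigl(V^*\tp M_{\la,\eta},M_{\la,\xi}\bigr),
$$
valid at every $q\in\C^\circ$.

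Next, since $V\in \Fin_q(\g)$ is a dualizable object of the rigid monoidal category $\Fin_q(\g)$ with dual $V^*$, the functor $V\tp (-)$ is right adjoint to $V^*\tp (-)$ on the category of $U_q(\g)$-modules. This is again a purely categorical statement, independent of $q$, and it yields
$$
\Hom_{U_q(\g)}\bigl(V^*\tp M_{\la,\eta},M_{\la,\xi}\bigr)\simeq \Hom_{U_q(\g)}\bigl(M_{\la,\eta},V\tp M_{\la,\xi}\bigr).
$$
Finally, Proposition \ref{multiplicities} identifies the right-hand side with $\Hom_\k(X_\eta,V\tp X_\xi)$ for almost all $q\in\C^\circ$. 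Composing the three isomorphisms delivers the claim, with the only restriction on $q$ being the one inherited from Proposition \ref{multiplicities}; in particular, for $t\in T_\Qbb$ the exceptional set is independent of $\xi$, $\eta$, and $V$, as already noted after Proposition \ref{multiplicities}.

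The only delicate point is the first adjunction: one has to check that restricting the target from $\Hom_\C(M_{\la,\eta},M_{\la,\xi})$ to its locally finite part does not shrink the space of $U_q(\g)$-equivariant maps from $V^*$. This is where finiteness of the weight spaces of the $M_{\la,\bullet}$ is essential, since it guarantees that the $U_q(\g)$-submodule generated by the (finite-dimensional) image of $V^*$ in $\Hom_\C(M_{\la,\eta},M_{\la,\xi})$ has weight spaces of finite dimension and is therefore locally finite. Once this is in place, the remainder of the argument is entirely formal.
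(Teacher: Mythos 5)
Your proposal is correct, and it is actually more direct than the paper's own argument. The paper proceeds by first passing to restricted duals: using irreducibility of $M_{\la,\xi}$ and $M_{\la,\eta}$, it identifies $\Hom_{U_q(\g)}\bigl(V^*,\Hom(M_{\la,\eta},M_{\la,\xi})\bigr)$ with $\Hom_{U_q(\g)}\bigl(\Hom(M'_{\la,\xi},M'_{\la,\eta}),V\bigr)$, then invokes an (unproved but parallel) ``version of Proposition~\ref{multiplicities} for dual modules of lowest weight'' to get $\Hom_\k(X_\eta^*,V\tp X_\xi^*)$, and finally converts back. Your chain of tensor-hom adjunction, dual adjunction, and Proposition~\ref{multiplicities} as stated avoids both the detour through lowest-weight modules and the appeal to an implicit analogue of the multiplicity proposition; it also makes transparent that the only source of the ``almost all $q$'' restriction is Proposition~\ref{multiplicities} itself. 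What the paper's route buys is that, after dualizing, one works with equivariant maps out of a locally finite module (which automatically decomposes into isotypic components), so the locally-finite issue never arises; your route resolves that issue directly instead.

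One small remark on your ``delicate point'': the justification you give via finite-dimensionality of weight spaces of the $M_{\la,\bullet}$ is unnecessarily roundabout and slightly off target, since $\Hom_\C(M_{\la,\eta},M_{\la,\xi})$ can have infinite-dimensional weight spaces even though the two modules individually have finite ones. The clean argument is simpler: for any $U_q(\g)$-equivariant $\phi\colon V^*\to \Hom_\C(M_{\la,\eta},M_{\la,\xi})$, the image $\phi(V^*)$ is a $U_q(\g)$-submodule (by equivariance) and is finite-dimensional (since $V^*$ is), hence every vector in it generates a finite-dimensional submodule; so $\phi$ lands in the locally finite part automatically. With that fix, the argument is clean and complete.
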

\begin{proof}
Since $M_{\la,\xi}$ and $M_{\la,\eta}$ are irreducible along with their dual modules of lowest weight, equivariant maps  from $V^*$ to $\Hom(M_{\la,\eta},M_{\la,\xi})$
are in bijection with equivariant maps from $\Hom(M'_{\la,\xi},M'_{\la,\eta})$ to $V$,
for every $V\in \Fin_q(\g)$.
We have a version of  Proposition \ref{multiplicities} for dual modules of lowest weights:
$$
\Hom_{U_q(\g)}\bigl(\Hom(M'_{\la,\xi},M'_{\la,\eta}),V\bigr)\simeq \Hom_{U_q(\g)}\bigl(M_{\la,\eta}',V\tp M'_{\la,\xi}\bigr)\simeq   \Hom_\k(X_{\eta}^*,V\tp X_{\xi}^*).
$$
The rightmost term is isomorphic to $\Hom_\k(V^*\tp X_\xi,X_{\eta})\simeq \Hom_\k(X_{\eta},V^*\tp X_\xi)$ as $V^*\tp X_\xi$ is completely reducible over $\k$.
\end{proof}

For every locally finite $U_q(\g)$-module $A$ with finite dimensional isotypic components and for every pair of weights $\eta, \xi\in \La^+_{\k}$, define
$$
A^{(\xi,\eta)}= \Hom_{U_q(\g)}\bigl(M_{\la,\eta},A\tp M_{\la, \xi}\bigr)\simeq \Hom_{U_q(\g)}\bigl(M_{\la,\eta}\tp M_{\la, \xi}',A\bigr).
$$
In the case of  $\xi=0=\eta$ we will write $A^\k=A^{(0,0)}$.
It follows from the  right isomorphism that $A^{(\xi,\eta)}\simeq (\ker J^+_\xi\cap \ker J^-_\eta)[\eta-\xi]$ and therefore $(A^*)^{(\xi,\eta)}\simeq A^{(\eta,\xi)}$.
We  have a quasi-classical isomorphism  $A^{(\xi,\eta)}\simeq \Hom_\k\bigl(X^\eta,  X^\xi\tp A\bigr)$, by Proposition \ref{multiplicities}.
Note that   $A^+_0=\ker J^+_0$ is the sum of $A^{(0,\xi)}$ over all $\xi\in \La(A^+_{0})$ because such weights are highest for finite dimensional
$\k$-submodules in $V$ and therefore $\k$-dominant.
\begin{lemma}
\label{dec_of_inv}
For any pair of modules $V,W\in \Fin_q(\g)$,
$$
\begin{array}{ccccccc}
(V\tp W)^{\k}&\simeq  & \op_{\xi \in \La(W^+_0)} W^{(0,\xi)} \tp  V^{(\xi,0)}.
\end{array}
$$
\end{lemma}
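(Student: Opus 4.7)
The plan is to push the functor $\Hom_{U_q(\g)}(M_\la,V\tp{}-{})$ through an irreducible decomposition of $W\tp M_\la$ in $\O_q(t)$, using the semi-simplicity established in Theorem \ref{semisimplicity}.

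First, for almost all $q$ the $U_q(\g)$-module $W\tp M_\la$ lies in $\O_q(t)$ and is therefore a direct sum of irreducibles $M_{\la,\xi}\simeq L(\la+\xi)$, $\xi\in \La_\k^+$, by Theorem \ref{semisimplicity} and Corollary \ref{pseudo-parabolic-irreducible1}. The multiplicity space of $M_{\la,\xi}$ is, by its very definition,
\[
W^{(0,\xi)}=\Hom_{U_q(\g)}(M_{\la,\xi}, W\tp M_\la).
\]
Proposition \ref{multiplicities} (for $V$ replaced by $W$ and $\xi,\eta$ replaced by $0,\xi$) identifies this space with $\Hom_\k(X_\xi,W\tp X_0)\simeq \Hom_\k(X_\xi,W)$, which is non-zero precisely when $X_\xi$ occurs in the $\k$-module $W$, i.e.\ when $\xi\in \La(W^+_0)$. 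Thus we obtain an isomorphism of $U_q(\g)$-modules
\[
W\tp M_\la \;\simeq\; \bigoplus_{\xi\in \La(W^+_0)} W^{(0,\xi)}\tp M_{\la,\xi}.
\]

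Tensoring on the left by $V$ (which commutes with direct sums and with the trivial tensor factor $W^{(0,\xi)}$) gives
\[
V\tp W\tp M_\la \;\simeq\; \bigoplus_{\xi\in \La(W^+_0)} W^{(0,\xi)}\tp V\tp M_{\la,\xi},
\]
and applying $\Hom_{U_q(\g)}(M_\la,-)$, which likewise respects the trivial factor $W^{(0,\xi)}$ and direct sums (the latter because each summand remains in the semi-simple category $\O_q(t)$ with finite-dimensional $\Hom$-spaces between irreducibles), yields
\[
(V\tp W)^\k \;=\; \Hom_{U_q(\g)}\bigl(M_\la, V\tp W\tp M_\la\bigr)\;\simeq\; \bigoplus_{\xi\in \La(W^+_0)} W^{(0,\xi)}\tp V^{(\xi,0)},
\]
which is the required decomposition.

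The only genuinely substantive input is the complete reducibility of $W\tp M_\la$ together with the identification of its multiplicity spaces; both have been established in the previous sections. No obstacle beyond bookkeeping is anticipated, and the argument is valid for almost all $q\in \C^\circ$ (for all $q$ outside a finite set independent of $V,W$ when $t\in T_\Qbb$, thanks to the uniformity noted after Proposition \ref{multiplicities}).
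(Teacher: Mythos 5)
Your argument follows the paper's own proof almost step for step: decompose $W\tp M_\la$ into irreducibles with multiplicity spaces $W^{(0,\xi)}$, tensor with $V$, and apply $\Hom_{U_q(\g)}(M_\la,-)$. The paper writes this as a single chain of isomorphisms, but the substance — complete reducibility plus the identification of multiplicity spaces — is identical.
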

\begin{proof}
This readily follows from complete reducibility:
$$
\Hom(M_\la, V\tp W \tp M_\la)\simeq  \op_{\xi\in \La(W^+_0)} W^{(0,\xi)}\tp \Hom(M_\la,V\tp M_{\la,\xi})\simeq  \op_{\xi\in \La(W^+_0)}W^{(0,\xi)}\tp V^{(\xi,0)},
$$
 and from the decomposition $W^+_0\simeq \op_{\xi\in \La(W^+_0)} W^{(0,\xi)}$.
\end{proof}

Now suppose that $A$ is an associative $U_q(\g)$-module algebra with multiplication $\cdot$.
Define a multiplication on $\Hom_{U_q(\g)}(M_\la,A\tp M_\la)\simeq A^{\k}$ by assigning the composition
$$
f_2\star f_1\colon M_\la\stackrel{f_1}{\longrightarrow} A\tp M_{\la}\stackrel{f_2}{\longrightarrow} A\tp (A\tp M_{\la})\stackrel{\cdot }{\longrightarrow} A\tp M_\la,
$$
to a pair of morphisms $f_1,f_2$. Clearly $\star$ is associative.

Furthermore, it extends to a right $A^\k$-action on $\Hom_{U_q(\g)}(M_\la,A\tp M_{\la,\xi})\simeq A^{(\xi,0)}$ by
$$
h\btl f\colon M_{\la}\stackrel{f}{\longrightarrow} A\tp M_{\la}\stackrel{\id\tp h}{\longrightarrow} A\tp A\tp M_{\la,\xi}\stackrel{\cdot }{\longrightarrow} A\tp M_{\la,\xi},
$$
and a left $A^\k$-action on $\Hom_{U_q(\g)}(M_{\la,\xi},A\tp M_\la)\simeq A^{(0,\xi)}$ by
$$
f\btr g\colon M_{\la,\xi}\stackrel{g}{\longrightarrow} A\tp M_{\la}\stackrel{f}{\longrightarrow} A\tp A\tp M_{\la}\stackrel{\cdot }{\longrightarrow} A\tp M_\la,
$$
Again associativity readily follows from associativity of composition and of the multiplication $\cdot$.

Denote by $\Sc^\nu\in U_q(\g_+)\tp U_q(\g_-)$ a lift of the inverse form of the irreducible module of highest
weight $\nu$.
The operations $\btr$ and $\btl$  can be written with the help of the inverse forms:
\be
h\btl  f&=&   (\Sc^\la_1\tr f)\cdot \Sc^\la_2\tr h),\quad h\in A^{(\xi,0)},\quad f\in A^\k,
\label{star_action_left}
\\
f \btr  g &=& (\Sc^\la_1\tr g)\cdot \Sc^\la_2\tr f), \quad f\in A^\k, \quad g\in A^{(0,\xi)}.
\label{star_action_right}
\ee

Explicit expressions for  $\Sc^\nu$ are known only for some special cases, e.g.
Verma modules \cite{M6} and base modules for quantum spheres \cite{M3}. With
the use of the relation between the inverse forms and extremal projectors (\ref{Shap_proj}), the
introduced operations can be presented in an explicit although more cumbersome form. Setting $\zt=\la+\xi$,
we write
\be
h\btl  f= (\>\cdot\>\tp \eps_\zt)\biggl( p_{\g}(0)\Bigl(p_{\g}^{-1}(\la)f\tp p_{\g}(0)
\bigl(p_{\g}^{-1}(\zt) h    \tp 1_{\la, \xi} \bigr)\Bigr)\biggr),\quad h\in A^{(\xi,0)},\quad f\in A^\k,
\nn
\ee
\be
f \btr  g = (\>\cdot\>\tp \eps_\la) \biggl( p_{\g}(0)\Bigl(p_{\g}^{-1}(\la)g\tp p_{\g}(0)
\bigl(p_{\g}^{-1}(\la) f\tp 1_{\la} \bigr)\Bigr)\biggr), \quad f\in A^\k, \quad g\in A^{(0,\xi)}.
\nn
\ee
The map $\ve_\zt$ is a linear functional that acts by  pairing with the highest vector via the contravariant form.

Next we study the classical limit of the introduced operations.
Let $ U_\hbar(\g)$ be the extension of $U_q(\g)$ over the ring of formal power series
in $\hbar =\ln q$.
Let   $N^\pm_\hbar\subset  U_\hbar(\g_\pm)$ be  $\C[[\hbar ]]$-submodules   that are $U_\hbar (\h)$-affine lifts of
$M_{\la}$ and $M_{\la}'$, respectively.

Consider a regular  $\hat U_\hbar(\h)$-submodule  in   $ \hat U_\hbar(\g)$, generated by $\gm^{-1}(N^-_\hbar ) N^+_\hbar$.
It contains the extremal twist $\Theta^\la$.

\begin{propn}
\label{class_lim_S}
The classical limit of $\Sc^\la$ is  $1\tp 1\in U(\k_+)\tp U(\k_-)$.
\end{propn}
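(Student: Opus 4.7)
The plan is to exploit the presentation~\eqref{Shap_proj} of $\Sc^\la$ via the shifted extremal projector $p_\g(\la)$, combined with the factorisation~\eqref{factorization} and the node-factor estimates of Lemma~\ref{q-num}. The aim is to show that every component of $\Sc^\la$ of non-zero weight either vanishes in the classical limit or lies in an ideal that annihilates $M_\la'\tp M_\la$ classically, so that the only surviving contribution is $1\tp 1$, viewed inside $U(\k_+)\tp U(\k_-)$.

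First, for $v\in V^+_\la$ formula~\eqref{Shap_proj} gives $\Sc^\la(v\tp 1_\la)=p_\g\bigl(p^{-1}_\g(\la)v\tp 1_\la\bigr)$. I would factor $p_\g(\la)$ as in~\eqref{factorization} and split the product into an $\Rm^+_{\g/\k}$-part and an $\Rm^+_\k$-part. By~\eqref{proj_eigen}, the eigenvalue of each factor $p_\al(\rho_\al+\la_\al)$ on a weight-$\la+\eta$ vector is a product of ratios of $q$-integers of the form $[(\la+\rho,\al^\vee)+c_1]_{q_\al}/[(\la+\rho+\eta,\al^\vee)+c_2]_{q_\al}$. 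For $\al\in\Rm^+_{\g/\k}$, Lemma~\ref{q-num}(1) keeps both sides non-zero near $q=1$, and since they share the same $t$-weight $\al(t)\neq 1$, the ratio tends to $1$. Hence the $\Rm^+_{\g/\k}$-part of $p_\g(\la)$ converges to the identity on $V^+_\la$, while the $\Rm^+_\k$-part assembles into the classical $\k$-extremal projector $p_\k(0)$, which acts as the identity on $\k_+$-invariants.

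Second, applying the unshifted projector $p_\g$ to $v\tp 1_\la$ produces the leading term $v\tp 1_\la$ plus corrections of the form $e\,v\tp f\,1_\la$ with $e\in U_q(\g_+)[-\mu]$, $f\in U_q(\g_-)[\mu]$, $\mu>0$. Classically, $1_\la$ is annihilated by $\k_-$ and $1_\la'$ by $\k_+$ (the Kac--Kazhdan condition~\eqref{Kac-Kazhdan} with $\xi=0$ and $m_\al=1$), so any correction with $f\in U(\g_-)\k_-$ or $e\in U(\g_+)\k_+$ acts trivially on $1_\la'\tp 1_\la$, whereas corrections supported on $\Rm^+_{\g/\k}$-directions vanish in the classical limit by the preceding step. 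Thus $\Sc^\la(v\tp 1_\la)\to v\tp 1_\la$ as $q\to 1$, meaning $\Sc^\la$ reduces modulo $J^+_\la\hat\tp U_q(\g_-)+U_q(\g_+)\hat\tp J^-_\la$ to $1\tp 1\in U(\k_+)\tp U(\k_-)$.

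The main obstacle is the degeneration of the $\Rm^+_\k$-factors of $p_\g(\la)$ at $q=1$: the Kac--Kazhdan condition forces $[(\la+\rho,\al^\vee)-1]_{q_\al}\to 0$ for $\al\in\Pi_\k$, so certain denominators in the projector eigenvalues become singular. The delicate step is to verify that the section $p^{-1}_\g(\la)$, whose existence and surjectivity are guaranteed by Proposition~\ref{extremal_dim}, removes these apparent singularities and classically yields the standard $\k$-extremal projector acting on $V^{\k_+}$; the quantisability of $t$ and the factorisation analysis of Section~\ref{Sec_Sing_v_parab} are the inputs that make this cancellation work.
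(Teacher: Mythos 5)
Your approach is structurally sound — using the presentation~\eqref{Shap_proj} of $\Sc^\la$ through extremal projectors, the factorization~\eqref{factorization}, and Lemma~\ref{q-num} is exactly the right toolbox, and your step 1 (the $\Rm^+_{\g/\k}$-factors of the shifted projector degenerate to the identity and the $\Rm^+_\k$-factors assemble into $p_\k$) is essentially the paper's computation $(p_\g(\la)v,w)\to(p_\k v,w)$. However, there is a genuine gap at the very end. You conclude from ``$\Sc^\la(v\tp 1_\la)\to v\tp 1_\la$ for all $V\in\Fin_q(\g)$ and all $v\in V^+_\la$'' that ``$\Sc^\la$ reduces modulo the ideals to $1\tp 1$,'' treating this implication as immediate (``meaning''). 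It is not. The statement to be proved is about the universal element $\Sc^\la$ in the completion $\hat U_\hbar(\g)$ modulo $\hat J^+_\la+\omega(\hat J^+_\la)$, and what you have verified is only the vanishing of its action on a family of test vectors $v\tp 1_\la$. To pass from the latter to the former you must know that this family of tests \emph{separates} elements of the coinvariant space $\Mc_0\simeq N^-N^+$, i.e.\ that the pairing $\Ind_\k^\g\C\tp\C[G]^\k\to\C$ is non-degenerate. That non-degeneracy is precisely Proposition~\ref{ind-inv-pairing} in the appendix, and the paper's proof explicitly invokes it (``Hence $\theta_0=1$ by Proposition~\ref{ind-inv-pairing}''). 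Your writeup never addresses this injectivity, so the implication is unjustified as stated.

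Two smaller remarks. First, the paper proceeds differently in the middle: it works with the extremal twist $\theta_\la=\gm^{-1}(\Sc^\la_2)\Sc^\la_1$ directly, shows $\theta_\la\to 1$, and then transfers the conclusion to $\Sc^\la$ by the automorphism $fhe\mapsto\gm^{-1}(f)he$ of $\hat U_\hbar(\g)$; this avoids the detailed bookkeeping of correction terms in $p_\g(\tilde v\tp 1_\la)$ that you attempt. Your more hands-on route is workable, but the phrase ``corrections supported on $\Rm^+_{\g/\k}$-directions vanish in the classical limit by the preceding step'' is a non-sequitur as written: the preceding step was a statement about ratios of $q$-integers in $p_\g(\la)$ tending to $1$, whereas what one needs here is that the individual reciprocals $1/[(\la+\rho+\mu,\al^\vee)+i]_{q_\al}$ appearing in the \emph{unshifted} projector applied to $V\tp M_\la$ tend to $0$. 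That is again a consequence of Lemma~\ref{q-num}(1) and $\al(t)\neq 1$, so the conclusion holds, but the citation should point directly to the lemma rather than to a step proving something different. Second, the correction terms $f^ke^k(v\tp 1_\la)$ with $\al\in\Rm^+_\k$ require the observation (which you make implicitly) that $\k_\pm$ classically annihilate $1_\la$ and act as $\k_\pm\tp 1$, so that $p_\k$ on the tensor reduces to $p_\k$ on $V$ — this part of your argument is correct and matches the paper's Proposition~\ref{extremal_dim}.
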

\begin{proof}
Let $\theta_0\in N^-_0N^+_0$ denote the classical limit of $\theta_\la$.
Pick up a module $V\in \Fin_q(\g)$ and a pair of vectors $w,v\in \widehat{\!V^+_\la}$. In the classical limit,
one has $(\theta_\la^{-1} v,w)=(p_\g(\la) v,w)\to (p_\k v,w)$. Therefore, $\theta_\la$ tends to
identity on $V^{\k_+}$ for every $V$. Hence $\theta_0=1$ by Proposition \ref{ind-inv-pairing}.

Now regard $\Sc^\la_{21}$ as an element of $ N^+_\hbar N^-_\hbar$ under the linear isomorphism $M_{\la}'\tp M_{\la}\to N^+_\hbar N^-_\hbar \subset \hat U_\hbar(\g)$
facilitated by the triangular decomposition of $U_\hbar(\g)$.
Consider a linear automorphism $\hat U_\hbar(\g)\to \hat U_\hbar(\g)$ defined by
$$
e f h\mapsto \gm^{-1}(f) e h, \quad f\in U_\hbar(\g_-), \quad h\in U_\hbar(\h), \quad e\in U_\hbar(\g_+).
$$
This map takes $\Sc^\la_{21}$ to  $\theta_\la=\gm^{-1}(\Sc^\la_2)\Sc^\la_1$.
Therefore $\Sc^\la\to 1\tp 1$ in the classical limit as required.
\end{proof}
Now suppose that $A$ is a flat deformation of a $U(\g)$-module algebra $A_0$. Then $A^\k$ is a flat deformation (as a vector space) of the subspace
of $\k$ invariants $A^\k_0$.
\begin{corollary}
\label{star-prod-alg}
 The associative algebra $A^\k$ with multiplication $\star$ is a flat deformation of the algebra $A_0^\k$ with opposite
 multiplication.
 \end{corollary}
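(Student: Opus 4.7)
The plan comprises three steps: flatness of $A^\k$ as a deformation of $A_0^\k$, associativity of $\star$, and identification of the classical limit with the opposite multiplication. For flatness, I will use the identification $A^\k \simeq \Hom_{U_q(\g)}(M_\la, A \tp M_\la)$. Decomposing the $U_q(\g)$-locally finite module $A$ into isotypic components and invoking Proposition \ref{multiplicities} with $\xi = \eta = 0$ on each component, one sees that every weight space of $A^\k$ has dimension equal to its classical counterpart in $A_0^\k$ for almost all $q$. Hence, upon base change to $\C_1(q)$, $A^\k$ is isomorphic to $A_0^\k \tp \C_1(q)$, which is the required flatness. Associativity of $\star$ is built in: the product $f_2 \star f_1$ is defined as a composition of three $U_q(\g)$-equivariant maps ending with the multiplication $A \tp A \to A$, so associativity of $\cdot$ combined with associativity of composition of morphisms yields associativity of $\star$ with no further work.

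The essential content is the classical limit. I will rewrite $\star$ using the Shapovalov lift $\Sc^\la \in U_q(\g_+) \tp U_q(\g_-)$ of the inverse invariant pairing between $M_\la$ and $M_\la'$. Under the identification $A^\k = A^{(0,0)}$, formula (\ref{star_action_left}) specialized to $\xi = 0$ reads
$$
h \star f \;=\; (\Sc^\la_1 \tr f) \cdot (\Sc^\la_2 \tr h), \qquad f,\, h \in A^\k.
$$
Proposition \ref{class_lim_S} asserts $\Sc^\la \to 1 \tp 1$ in the classical limit, so the right-hand side degenerates to $f \cdot h$. Consequently $\star$ specializes at $q = 1$ to the opposite of the classical multiplication on $A_0^\k$, as required.

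The main obstacle in executing this plan is verifying that the closed-form expression (\ref{star_action_left}) genuinely computes $\star$ on $A^\k$ itself, rather than only the left action of $A^\k$ on some other isotypic piece $A^{(\xi,0)}$. This rests on tracing through the bijection sending $f \in A^\k$ to the extremal vector $f(1_\la) \in A \tp M_\la$ and on the relation (\ref{Shap_proj}) between $\Sc^\la$ and the extremal projector $p_\g$, which identifies this extremal vector as $\Sc^\la (a \tp 1_\la)$ with $a = (\id \tp \eps_{M_\la}) f(1_\la)$. Once this identification is firmly in place, both associativity and the classical limit follow without further calculation.
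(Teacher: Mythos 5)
Your proposal is correct and takes essentially the same route as the paper: specialize formula (\ref{star_action_left}) to $\xi = 0$ to get $h \star f = (\Sc^\la_1 \tr f) \cdot (\Sc^\la_2 \tr h)$, then invoke Proposition \ref{class_lim_S} to send $\Sc^\la \to 1 \tp 1$ in the classical limit, yielding the opposite multiplication. The paper's proof is terser — it takes the vector-space flatness and the identification (\ref{star_action_left}) for granted, both having been established in the preceding discussion — but the essential content is identical to what you wrote.
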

\begin{proof}
  The formula (\ref{star_action_left}) for $h\in \A^{0,0}$ turns to
$
h \star f=\Sc^\la_1 f \cdot \Sc^\la_2 h.
$
By Lemma \ref{class_lim_S}, $\Sc^\la \to 1\tp 1$ in the classical limit $q\to 1$, and the assertion follows.
\end{proof}

We take $\Tc$  with the $U_q(\g)$-action $\tr$ for the module-algebra $A$.
Endow the dual vector space $V^*$  of a $U_q(\g)$-module $V$ with a left action $x\diamond v^*=v^* \tl \gm(x)$. When applied to $\Tc$,
it gives rise to a left action, which is compatible with the opposite multiplication and commutes with $\tr$.

Note that $\star$-product is often defined as opposite to the one introduced above.
That version is equivariant with respect to  the right translation action $\tl$ while the present one respects $\diamond$.
\begin{thm}
\begin{enumerate}
   \item  The associative algebra $\Tc^\k$ with multiplication $\star$ is an equivariant flat deformation of $\C[O]$.
  \item   The $\Tc^\k$-modules $\Tc^{(0,\xi)}$ and $\Tc^{(\xi,0)}$ are $U_q(\g)$-equivariant flat deformations of the
   associated vector bundles on $O$ with fibers
  $X_\xi$ and its dual, respectively.
\end{enumerate}
\end{thm}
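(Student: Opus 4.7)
My plan is to reduce both parts to (i) a Peter--Weyl-type decomposition of $\Tc$, (ii) the multiplicity computations already established (Proposition~\ref{multiplicities} and Corollary~\ref{k-types}), and (iii) the classical-limit computation of Corollary~\ref{star-prod-alg}. Concretely, under the $\tr$-action only the right tensor factor of $V^*\tp V$ transforms, so the Peter--Weyl splitting $\Tc\simeq\bigoplus_{[V]}V^*\tp V$ induces, as $U_q(\g)$-modules under $\diamond$ (which acts on the $V^*$ leg), the direct sum decompositions
\begin{equation*}
\Tc^\k\;\simeq\;\bigoplus_{[V]}V^*\tp V^{(0,0)},\qquad \Tc^{(0,\xi)}\;\simeq\;\bigoplus_{[V]}V^*\tp V^{(0,\xi)},\qquad \Tc^{(\xi,0)}\;\simeq\;\bigoplus_{[V]}V^*\tp V^{(\xi,0)},
\end{equation*}
valid for almost all $q$. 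By Proposition~\ref{multiplicities} one has $V^{(0,0)}\simeq \Hom_\k(\C,V)=V^\k$ and $V^{(0,\xi)}\simeq \Hom_\k(X_\xi,V)$, $V^{(\xi,0)}\simeq \Hom_\k(V,X_\xi)^*$ (the second via the duality $(A^*)^{(\xi,\eta)}\simeq A^{(\eta,\xi)}$ already noted in the text). On the classical side, Frobenius reciprocity together with connectedness of $K$ gives precisely
$$\C[O]=\bigoplus_{[V]}V^*\tp V^\k,\qquad \Gamma(O,V_{X_\xi})\simeq\bigoplus_{[V]}V^*\tp\Hom_\k(X_\xi,V),$$
with the analogous formula for the dual bundle. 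Matching isotypic components summand by summand yields a $U_q(\g)$-equivariant ($\diamond$-equivariant) linear isomorphism between $\Tc^\k$, $\Tc^{(0,\xi)}$, $\Tc^{(\xi,0)}$ and their classical counterparts; together with flatness of $\Tc$, this gives flatness of each of the deformations in question.

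It remains to identify the multiplicative structures in the classical limit. For part (1), $\diamond$-equivariance of $\star$ is automatic: $\star$ is built from the multiplication in $\Tc$ (which is $\diamond$-equivariant) and from composition of $\tr$-equivariant morphisms (which is $\diamond$-natural because $\tr$ and $\diamond$ commute). By Corollary~\ref{star-prod-alg}, the specialisation $q\to 1$ takes $\star$ on $\Tc^\k$ to the opposite of the commutative classical multiplication of $\C[G]^\k=\C[O]$; since opposite and original multiplications coincide in the commutative case, this recovers $\C[O]$ as the classical limit, proving (1). For part (2), one applies the same classical-limit argument to the formulas \eqref{star_action_left}--\eqref{star_action_right} for $\btr,\btl$: Proposition~\ref{class_lim_S} gives $\Sc^\la\to 1\tp 1$, so in the limit the module actions reduce to the restriction of the classical multiplication of $\C[G]$ to $\Hom_\k(X_\xi,\Tc_0)$ and $\Hom_\k(\Tc_0,X_\xi)$ respectively, which are precisely the $\C[O]$-module structures on global sections of the associated bundles with fibres $X_\xi$ and $X_\xi^*$ via Frobenius reciprocity. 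Associativity and $\diamond$-equivariance of the two actions are inherited from those of $\star$ exactly as for part (1).

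The main obstacle I anticipate is the bookkeeping of the two commuting $U_q(\g)$-actions on $\Tc$ and the correct identification of duals/opposites in the Peter--Weyl summands, so that at $q=1$ one lands on the standard, rather than opposite or dualised, $\C[O]$-module structure on sections of $V_{X_\xi}$; concretely, verifying that $V^{(\xi,0)}$ really corresponds to the dual bundle (not the bundle itself) requires tracking the direction of the isomorphism $(A^*)^{(\xi,\eta)}\simeq A^{(\eta,\xi)}$ through the Peter--Weyl pairing. Everything else is a matter of combining results already proved: the semisimplicity and character formulas from Section~\ref{Sec_Gen_parab_Verma_mod}, the multiplicity result of Proposition~\ref{multiplicities}, and the classical-limit Propositions~\ref{class_lim_S} and Corollary~\ref{star-prod-alg}.
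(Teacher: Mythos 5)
Your proposal is correct and follows essentially the same route as the paper: you spell out the Peter--Weyl decomposition and isotypic matching explicitly where the paper instead cites Corollary~\ref{k-types}, but the decisive steps coincide — flatness of the deformed multiplication via Corollary~\ref{star-prod-alg} for part (1) and the classical limit $\Sc^\la\to 1\tp 1$ from Proposition~\ref{class_lim_S} for the module actions in part (2). The dual-versus-original-fiber bookkeeping that you flag as a concern is indeed the only delicate point, and the paper's proof is equally terse about it, so nothing is missing relative to the source.
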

\begin{proof}
First of all note that the multiplication $\cdot$ is a star-product deformation of the classical multiplication on $\C[G]$, \cite{T}.
Furthermore, by Corollary \ref{k-types}, the $U_q(\g)$-module structure of $\Tc^{0,\xi}$ (resp. $\Tc^{\xi,0}$) under the action $\diamond$
is similar to the $U(\g)$-module structure of the associated vector bundles with fiber $X_\xi$ (resp. $X_\xi^*$).
So $\Tc^{0,\xi}$ and  $\Tc^{\xi,0}$ are flat deformations as $U_q(\g)$-modules. Finally,
  1) is a special case of Corollary \ref{star-prod-alg} for $A=\Tc^\k$ and
2) directly follows from Lemma \ref{class_lim_S}.
\end{proof}
Up to now we viewed sections of quantized vector bundles as equivariant linear maps from $\Hom_{U_q(\g)}(-,\Tc\tp -)$.
Let us give them an alternative interpretation in terms of  $\C$-linear maps between
objects from $\O_q(t)$. They are natural $U_q(\g)$-modules whose locally finite parts   also have a natural algebraic structure.

We endow the vector space $\Hom(A,B)$  between two $U_q(\g)$-modules $A$ and $B$ with a  left $U_q(\g)$-action
$(x\tr f)(a)=x^{(1)}f(\gm(x^{(2)}a)$.
\begin{propn}
\label{bundles_embedded}
Upon extension over the field  $\C(q)$, the algebra   $\Tc^\k$ is isomorphic to the  locally finite part of $\End(M_\la)$.
The isomorphism commutes with the action of $U_q(\g)$.
  The $\Tc^\k$-actions $\btr$ and $\btl$ go over to the  natural $\End(M_\la)$-actions on
  $\Hom(M_{\la, \xi},M_\la)$ and $\Hom(M_\la,M_{\la, \xi})$, respectively.
\end{propn}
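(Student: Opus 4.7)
The plan is to match isotypic components on both sides using the Peter-Weyl decomposition together with Proposition \ref{multiplicities} and Corollary \ref{k-types}, and then to transfer the multiplicative and module structures via the relation (\ref{Shap_proj}) between $\Sc^\la$ and the extremal projector. Over $\C(q)$ the module $M_\la$ is irreducible (Theorem \ref{base_module_irred}) and every $V\tp M_\la$ with $V\in \Fin_q(\g)$ is completely reducible into modules $M_{\la,\xi}$ (Theorem \ref{semisimplicity}, Corollary \ref{pseudo-parabolic-irreducible1}), which is what makes the matching of multiplicities possible.

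From $\Tc\simeq \bigoplus_{[V]} V^*\tp V$, in which $\tr$ acts on the right tensor factor, one first obtains
\[
\Tc^\k \;\simeq\; \bigoplus_{[V]} V^*\tp \Hom_{U_q(\g)}(M_\la, V\tp M_\la) \;\simeq\; \bigoplus_{[V]} V^*\tp \Hom_\k(X_0, V\tp X_0)
\]
by Proposition \ref{multiplicities}. Decomposing $\End(M_\la)_{\mathrm{lf}}$ by $U_q(\g)$-isotypic components and applying Corollary \ref{k-types} (with the relabeling $V\leftrightarrow V^*$) yields the same sum, so matching $V^*$-isotypic parts furnishes a $U_q(\g)$-equivariant vector-space isomorphism $\Tc^\k \simeq \End(M_\la)_{\mathrm{lf}}$. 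The equivariance is automatic, since both sides carry the same $\tr$-action on the $V^*$ factor. The identical computation with one copy of $M_\la$ replaced by $M_{\la,\xi}$ gives the vector-space isomorphisms $\Tc^{(\xi,0)}\simeq \Hom(M_{\la,\xi},M_\la)_{\mathrm{lf}}$ and $\Tc^{(0,\xi)}\simeq \Hom(M_\la,M_{\la,\xi})_{\mathrm{lf}}$.

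To see that this isomorphism intertwines $\star$ with composition in $\End(M_\la)$, and the actions $\btl$, $\btr$ with the natural $\End(M_\la)$-actions on the corresponding $\Hom$-spaces, I would trace through the formulas (\ref{star_action_left}), (\ref{star_action_right}): each class $f\in \Tc^\k$ represents, on a $V$-summand, an extremal vector $u\in (V\tp M_\la)^+$, and by (\ref{Shap_proj}) $u=\Sc^\la_{21}(v\tp 1_\la)$ for some $v\in V^{\k_+}[0]$; the corresponding endomorphism $\hat f \in \End(M_\la)$ is obtained by contracting $u$ against $M_\la$ via the contravariant form. The $\Sc^\la$-twisted product in (\ref{star_action_left}) then rewrites as $\hat g\circ \hat f$ once one uses that $\Sc^\la$ lifts the inverse invariant pairing $M_\la\tp M'_\la\to \C$ and that the multiplication in $\Tc$ dualizes the coproduct of $U_q(\g)$.

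I expect the main obstacle to be precisely this last bookkeeping step: tracking how the Peter-Weyl bimodule structure on $\Tc$, the Hopf pairing $(\Tc, U_q(\g))\to \C$, and the $\Sc^\la$-contractions combine so that the formal twisted product in (\ref{star_action_left}) reduces to honest composition of endomorphisms. Once this is verified for $\star$, the analogous statements for $\btl$ and $\btr$ will follow from the same calculation with $M_{\la,\xi}$ in place of one $M_\la$ in the tensor product.
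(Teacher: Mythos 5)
Your overall strategy — Peter–Weyl decomposition, Proposition \ref{multiplicities} and Corollary \ref{k-types} for matching $V^*$-isotypic components, then the $\Sc^\la$-contraction to identify the products — is the same strategy the paper uses. However, there are two real gaps in how you execute it, and the second one means the proposal does not actually close.

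First, your vector-space isomorphism is obtained by \emph{abstract} matching of multiplicities. Since the $V^*$-isotypic component on each side has dimension $\dim\Hom_\k(X_0,V\tp X_0)$, which is typically greater than $1$, this does not specify a canonical isomorphism. Until you fix the map, the question of whether it intertwines $\star$ with composition is not even well posed. The paper avoids this by writing an explicit formula at the outset: for $g=v^*\tp v\in V^*\tp V^{(0,\xi)}$, the associated operator $M_{\la,\xi}\to M_\la$ is $x\,1_{\la,\xi}\mapsto (v^*,x^{(1)}\Sc^\la_1 v)\,x^{(2)}\Sc^\la_2 1_\la$. Only with this concrete map in hand does the multiplicative check become a finite computation.

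Second, you explicitly defer the multiplicative verification, calling it ``bookkeeping'' and ``the main obstacle,'' so the proposal stops exactly where the content lies. The paper's resolution is a clean morphism-level observation: composing the operator attached to $f=w^*\tp w\in W^*\tp W^{(0,0)}$ with the operator attached to $g$ produces a morphism equal to the operator attached to $\Sc^\la(g\tp f)$, and $\Sc^\la(g\tp f)$ is exactly $f\btr g$ by (\ref{star_action_right}) combined with the fact that the tensor product $g\tp f$, viewed in $(V\tp W)^*\tp(V\tp W)$, is the $\Tc$-multiplication of matrix elements. This single identity simultaneously handles $\star$, $\btr$, and (by symmetry) $\btl$. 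That identification — the multiplication in $\Tc$ dualizes the coproduct on the operator side so precisely that the twisted product (\ref{star_action_right}) becomes honest composition — is the step your sketch names but never carries out, and it is the substance of the proof.
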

\begin{proof}
  Define a map $\Tc^{(0,\xi)}\to \Hom(M_{\la,\xi},M_\la)$ for each $\xi\in \La^+_{\k}$  as follows.
  Every matrix element $g=v^*\tp v\in V^*\tp V^{(0,\xi)}\subset \Tc^{(0,\xi)}$
 goes to a linear map
\be
g\colon M_{\la,\xi} \supset x 1_{\la,\xi} \mapsto (v^*, x^{(1)}\Sc^\la_1v) \tp x^{(2)}\Sc^\la_21_\la \in M_{\la}, \quad x\in U_q(\g).
\nn\label{assoc_bund_hom}
\ee
This assignment is equivariant and its image in $\Hom(M_{\la,\xi},M_\la)$
  is the isotypic $V^*$-component, hence it is an isomorphism by Proposition \ref{multiplicities}.
  Furthermore,  for $f=w^*\tp w\in W^*\tp W^{(0,0)}$ we have
$$
\begin{array}{cccccc}
M_\la&\leftarrow& M_\la&\leftarrow &M_{\la,\xi}\\
&f\uparrow \hspace{10pt}&&g\uparrow\hspace{10pt}\\
&W^*& \tp &V^*
\end{array}
\quad= \quad
\begin{array}{cccccc}
M_\la &\longleftarrow &M_{\la,\xi}\\
&\hspace{25pt}\uparrow f\btr g\\
&\hspace{15pt}(V \tp W)^*
\end{array}
$$
The equality holds because the morphism in the right diagram is $\Sc^{\la}(g\tp f)$, and the tensor product is the multiplication $\cdot$ of matrix elements of representations constituting $\Tc$. This proves the statement with regard to $\btr$ (and $\star$ as a special case).
The case of $\btl$ is proved similarly.
\end{proof}
\noindent
Remark that for $t$ of finite order, the above assertion can be specialized at almost all $q\not =1$.

In the next section we describe quantized vector bundles as projective modules over $\Tc^\k$.
This models  local triviality of the classical vector bundles, \cite{S,Sw}.

\subsection{Quantum vector bundles as projective $\Tc^\k$-modules}

We saw in the previous section that the locally finite parts of the $U_q(\g)$-modules $\Hom(M_\la, M_{\la,\xi})$ and
$\Hom(M_{\la,\xi},M_\la)$   are isomorphic to $\Tc^{(\xi,0)}$ and, respectively, $\Tc^{(0,\xi)}$.
In particular, the locally finite part of $\End(M_\la)$ is a $U_q(\g)$-algebra, isomorphic to $\Tc^\k$.
The following result is obtained in \cite{JM}, Theorem 5.6 for quaternionic  projective plain but
 is valid in general.
\begin{thm}
The  $\Fin_q(\g)$-module category $\O_q(t)$  is equivalent to the category  of equivariant finitely generated projective right $\Tc^\k$-modules.
\end{thm}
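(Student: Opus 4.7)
The plan is to define the functor
\[
F \colon \O_q(t) \longrightarrow \mathcal{E}, \qquad F(N) = \Hom(M_\la, N)_{lf},
\]
where $\mathcal{E}$ denotes the category of equivariant f.g.\ projective right $\Tc^\k$-modules, and the right $\Tc^\k$-action on $F(N)$ is pre-composition under the isomorphism $\Tc^\k \simeq \End(M_\la)_{lf}$ of Proposition \ref{bundles_embedded}. The natural vector-space identification $\Hom(M_\la, V \tp N) \simeq V \tp \Hom(M_\la, N)$ for $V \in \Fin_q(\g)$ is compatible with the diagonal $U_q(\g)$-action and the right $\End(M_\la)$-action, so $F$ is $\Fin_q(\g)$-linear. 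By Proposition \ref{bundles_embedded} the functor sends each simple $M_{\la,\xi}$ to $\Tc^{(\xi,0)}$.

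First I would verify $F$ lands in $\mathcal{E}$. The crux is the computation
\[
F(V \tp M_\la) \simeq V \tp \End(M_\la)_{lf} \simeq V \tp \Tc^\k
\]
as equivariant right $\Tc^\k$-modules—an equivariantly free module generated by the finite-dimensional $U_q(\g)$-type $V$. By Theorem \ref{semisimplicity}, every $N \in \O_q(t)$ decomposes as a finite direct sum $\bigoplus_i M_{\la,\xi_i}$ of simples, and each $M_{\la,\xi_i}$ is a direct summand of $V \tp M_\la$ for some $V \in \Fin_q(\g)$; hence $F(N)$ is an equivariant direct summand of $V \tp \Tc^\k$, thus equivariantly f.g.\ projective.

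Fully faithfulness reduces by semi-simplicity and additivity to the bijection
\[
\Hom_{\O_q(t)}(M_{\la,\xi}, M_{\la,\eta}) \;\xrightarrow{\;\sim\;}\; \Hom_{\mathcal{E}}(\Tc^{(\xi,0)}, \Tc^{(\eta,0)}).
\]
The left side equals $\delta_{\xi,\eta}\C$ by Corollary \ref{pseudo-parabolic-irreducible1}. For the right side, any equivariant $\Tc^\k$-linear map between these projectives is determined by its value on an equivariant generating $U_q(\g)$-subspace; by Corollary \ref{k-types} and Proposition \ref{multiplicities} the resulting multiplicity space matches $\Hom_\k(X_\xi, X_\eta) = \delta_{\xi,\eta}\C$.

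For essential surjectivity, given $P \in \mathcal{E}$, local finiteness of the $U_q(\g)$-action together with finite generation over $\Tc^\k$ furnishes an equivariant surjection $V \tp \Tc^\k \twoheadrightarrow P$ with $V \in \Fin_q(\g)$; projectivity splits it via an equivariant idempotent $e \in \End_{\mathcal{E}}(V \tp \Tc^\k)$. By the already-established fully-faithful property applied to $V \tp M_\la \in \O_q(t)$, there is a unique idempotent $\tilde e \in \End_{\O_q(t)}(V \tp M_\la)$ with $F(\tilde e)=e$; setting $N = \im \tilde e \in \O_q(t)$ yields $F(N) \simeq P$, and the $\Fin_q(\g)$-linearity of $F$ promotes this to an equivalence of $\Fin_q(\g)$-module categories. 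The main obstacle is the equivariant Serre--Swan input of this final paragraph—that every $P \in \mathcal{E}$ admits such an equivariant presentation from some $V \tp \Tc^\k$—which requires that the $U_q(\g)$-submodule generated by any finite $\Tc^\k$-generating set of $P$ is contained in finitely many $\Fin_q(\g)$-isotypic components; this in turn follows from the locally finite $U_q(\g)$-action and compatibility with the right $\Tc^\k$-action through the coproduct.
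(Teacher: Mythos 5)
The paper does not actually supply a proof of this theorem; it cites \cite{JM} (Theorem 6.6 there) and asserts the argument carries over. So strictly speaking there is no ``paper's own proof'' to match against. That said, your proof is the natural Serre--Swan/Morita-style argument one expects, built around $F(N)=\Hom(M_\la,N)_{lf}$ with the right $\Tc^\k$-action by precomposition through the isomorphism $\Tc^\k\simeq\End(M_\la)_{lf}$ of Proposition \ref{bundles_embedded}, and it hangs together: $F$ sends $V\tp M_\la$ to the equivariantly free module $V\tp\Tc^\k$, direct summands to direct summands, and essential surjectivity is recovered from an equivariant idempotent in $\End_{\mathcal{E}}(V\tp\Tc^\k)$ lifted through full faithfulness. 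I checked the compatibility of the $U_q(\g)$-action with precomposition: indeed $x\cdot(g\circ f)=\sum(x^{(1)}\cdot g)\circ(x^{(2)}\cdot f)$ holds for the action $(x\cdot g)(a)=x^{(1)}g(\gm(x^{(2)})a)$, so $F(N)$ really is an equivariant right $\Tc^\k$-module. Finite generation of $F(N)$ comes from the finiteness of the decomposition of $N$ into simples (bounded by the multiplicities in $V\tp M_\la$), which you use implicitly; it is worth stating.

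The one place your write-up is thin is the computation of $\Hom_{\mathcal{E}}(\Tc^{(\xi,0)},\Tc^{(\eta,0)})$. ``The resulting multiplicity space matches $\Hom_\k(X_\xi,X_\eta)$'' is the right answer but the mechanism is suppressed. The cleaner route: realize both $\Tc^{(\xi,0)}$ and $\Tc^{(\eta,0)}$ as direct summands of one free module $V\tp\Tc^\k$ (enlarge $V$ so that $X_\xi,X_\eta\subset V$); identify $\End_{\mathcal{E}}(V\tp\Tc^\k)$ with $\Hom_{U_q(\g)}(V,V\tp\Tc^\k)$, using that a $\Tc^\k$-linear map is determined by its restriction to $V\tp 1$; then by Peter--Weyl, Lemma \ref{dec_of_inv} and Proposition \ref{multiplicities} this algebra is a flat deformation of $\End_\k(V)$, and the idempotents $e_\xi,e_\eta$ splitting off $\Tc^{(\xi,0)},\Tc^{(\eta,0)}$ are exactly those projecting onto the $\k$-isotypic components $X_\xi,X_\eta\subset V$. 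Then $\Hom_{\mathcal{E}}(\Tc^{(\xi,0)},\Tc^{(\eta,0)})=e_\eta\End_\k(V)e_\xi=\Hom_\k(X_\xi,X_\eta)=\dt_{\xi,\eta}\C$. With that inserted, the argument is complete.
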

\noindent
Similar statement holds upon replacement of right $\Tc^\k$-modules with left.

It follows that any invariant projector from $V\tp M_\la$ to an irreducible  submodule is a matrix with entries in the
locally finite part of $\End(M_\la)$, which coincides with $\Tc^\k$ if we allow for division by $q-1$.
The question is if
such a projector has classical limit. We did not check it for projective spaces  and even spheres  in \cite{M4,M5}  and we fill that gap here.\begin{lemma}
For each $W\in \Fin_q(\g)$ there is a quasi-classical isomorphism
$$
W\tp \Tc^\k\simeq \op_{[V]}V^*\tp (V\tp W)^{\k}
$$ of $U_q(\g)$-modules.
\label{rearrange_trivial}
\end{lemma}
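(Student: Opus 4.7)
The plan is to match isotypic decompositions on both sides, using the Peter-Weyl-type structure of $\Tc^\k$ already established and the complete reducibility results of Section 5. First I would invoke Proposition \ref{bundles_embedded}, which identifies $\Tc^\k$ with the locally finite part of $\End(M_\la)$ as a $U_q(\g)$-module, together with Corollary \ref{k-types} specialized to $\xi=\eta=0$, which computes the multiplicity of the $V^*$-isotypic component to be $V^\k$. This yields a canonical decomposition
$$
\Tc^\k \;\simeq\; \bigoplus_{[U]} U^* \tp U^\k,
$$
in which $U$ runs over isomorphism classes of finite-dimensional irreducible $U_q(\g)$-modules and $U^\k$ plays the role of a multiplicity space (trivial $U_q(\g)$-action). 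Tensoring with $W$ on the left then gives $W \tp \Tc^\k \simeq \bigoplus_{[U]} (W \tp U^*) \tp U^\k$.

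Next I would extract the $V^*$-isotypic component of each side. On the left, by the natural $U_q(\g)$-equivariant adjunction $\Hom_{U_q(\g)}(V^*, W\tp U^*) \simeq \Hom_{U_q(\g)}(U, V\tp W)$ (valid in $\Fin_q(\g)$), the multiplicity of $V^*$ equals
$$
\bigoplus_{[U]} \Hom_{U_q(\g)}(U, V\tp W) \tp U^\k.
$$
On the right-hand side of the asserted isomorphism, only the summand indexed by $V$ contributes, with multiplicity space $(V\tp W)^\k$.

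To conclude, I would reconcile these two expressions. By Theorem \ref{semisimplicity} together with Proposition \ref{multiplicities}, the tensor product $V\tp W$ admits, quasi-classically, a decomposition $V\tp W \simeq \bigoplus_{[U]} \Hom_{U_q(\g)}(U, V\tp W) \tp U$ into irreducibles in $\Fin_q(\g)$; applying the $\k$-invariants functor $(-)^\k = \Hom_{U_q(\g)}\bigl(M_\la, (-)\tp M_\la\bigr)$ and using its additivity together with $U^\k = \Hom_{U_q(\g)}(M_\la, U\tp M_\la)$ produces
$$
(V\tp W)^\k \;\simeq\; \bigoplus_{[U]} \Hom_{U_q(\g)}(U, V\tp W) \tp U^\k,
$$
which matches the left-hand expression exactly. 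The assembled natural transformation, built from Peter-Weyl, tensor-hom adjunction, and the identification of multiplicity spaces, is the desired $U_q(\g)$-isomorphism.

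The main obstacle will be bookkeeping rather than any deep argument: one must be careful to distinguish the $U_q(\g)$-action (carried by $V^*$) from the multiplicity spaces (which are acted on trivially), and verify that each natural identification above is $U_q(\g)$-equivariant rather than merely a vector-space isomorphism. The adjective "quasi-classical" enters only through the use of Theorem \ref{semisimplicity}: one must exclude the finite set of $q$ at which the tensor products $V\tp W\tp M_\la$ fail to be completely reducible. Since only those $V$ appear for which $V^*$ is a summand of $W\tp \Tc^\k$, the full analysis depends only on $W$, so a single finite exceptional set suffices for the stated isomorphism.
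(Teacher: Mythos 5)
Your proof is correct and follows essentially the same route as the paper: Peter--Weyl decomposition of $\Tc^\k$, tensoring with $W$, reindexing isotypic components via the adjunction $\Hom_{U_q(\g)}(V^*,W\tp U^*)\simeq\Hom_{U_q(\g)}(U,V\tp W)$, and reassembling $(V\tp W)^\k$ using semi-simplicity and additivity of the $\k$-invariants functor. One small caveat: the decomposition $V\tp W\simeq\bigoplus_{[U]}\Hom_{U_q(\g)}(U,V\tp W)\tp U$ holds for all $q\in\C^\circ$ because $\Fin_q(\g)$ is always semi-simple away from roots of unity, so Theorem~\ref{semisimplicity} is not the source of the quasi-classical qualifier --- the isomorphism is quasi-classical simply because each identification in the chain specializes coherently at $q=1$.
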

\begin{proof}
Decomposing the left-hand side to isotypic components we write it as
$$
\op_{[Z]}(W\tp Z^*)\tp Z^{\k}\simeq \op_{[V]}V^*\tp \Bigl(\op_{[Z]}\Hom_{U_q(\g)}(V^*,W\tp Z^*)\tp Z^{\k}\Bigr).
$$
Replacing $\Hom_{U_q(\g)}(V^*,W\tp Z^*)$ with $\Hom_{U_q(\g)}(Z,V\tp W)$ we find the sum in the brackets equal to
$(V\tp W)^{\k}$ (a consequence of complete reducibility of $V\tp W$),  which implies the required isomorphism. It is obviously quasi-classical.
\end{proof}
We arrive at the main results of this section.
\begin{thm}
\label{quasi-classical projector}
The quantum vector bundles $\Tc^{(\xi,0)}$ and $\Tc^{(0,\xi)}$ are flat deformations as projective $\Tc^\k$-modules.
\end{thm}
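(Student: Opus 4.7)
The plan is to realise each of $\Tc^{(\xi,0)}$ and $\Tc^{(0,\xi)}$ as the range of a $U_q(\g)$-equivariant idempotent on a finitely generated free right $\Tc^\k$-module $V\tp \Tc^\k$, and to show that this idempotent admits a specialisation at $q=1$ equal to the corresponding classical equivariant projector on $V\tp \C[O]$. Flatness of the projective deformation will then follow from flatness of $\Tc^\k$ itself together with quasi-classicality of the idempotent.

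First I would pick $V\in \Fin_q(\g)$ with $V^{(0,\xi)}\neq 0$, which by Theorem \ref{semisimplicity}(2) is equivalent to $X_\xi$ occurring as a $\k$-submodule of $V$ classically. Combining Lemmas \ref{rearrange_trivial} and \ref{dec_of_inv} yields a quasi-classical $U_q(\g)$-module isomorphism $V\tp \Tc^\k\simeq \op_{\eta\in \La^+_\k} V^{(0,\eta)}\tp \Tc^{(\eta,0)}$, which through Proposition \ref{bundles_embedded} is also an isomorphism of right $\Tc^\k$-modules. Projection onto the $\eta=\xi$ summand defines an equivariant idempotent $e_\xi\in \End_{\Tc^\k}(V\tp \Tc^\k)$ whose range is $V^{(0,\xi)}\tp \Tc^{(\xi,0)}$, exhibiting $\Tc^{(\xi,0)}$ as a direct summand of a free $\Tc^\k$-module of rank $\dim V$. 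The symmetric construction on the left delivers $\Tc^{(0,\xi)}$.

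Quasi-classicality of $e_\xi$ is the main step. Because the displayed decomposition is quasi-classical, it suffices to verify that on each $W^*$-isotypic component $(W\tp V)^\k\subset V\tp \Tc^\k$ the projection onto the $X_\xi$-summand supplied by Lemma \ref{dec_of_inv} has a classical limit. On the quantum side this projection is implemented, via (\ref{Shap_proj}) and Proposition \ref{bundles_embedded}, by the shifted extremal projector $p_\g(\la)$ acting on $\widehat{W^{(0,\xi)}}\subset W$. By Proposition \ref{extremal_dim}(1), $p_\g(\la)$ surjects onto $W^{(0,\xi)}$ away from a finite set, and the argument given in its proof identifies its classical limit on $\widehat{W^{(0,\xi)}}$ with the classical $\k$-equivariant projector onto the $X_\xi$-isotypic subspace of $W$. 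Thus the matrix entries of $e_\xi$ in a weight basis are regular at $q=1$, and the specialisation of $e_\xi$ is the classical equivariant projector onto the sub-bundle of $V\tp \C[O]$ with fibre $X_\xi$ (respectively $X_\xi^*$).

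The main obstacle I foresee is controlling the finite exceptional sets across all $W$. For each $W\in \Fin_q(\g)$ the bad set $\Xc_{W,\xi}$ of Proposition \ref{extremal_dim} may depend on $W$, and the $V^*$-isotypic part of $\Tc^{(\xi,0)}$ involves infinitely many such $W$. For $t\in T_\Qbb$ this issue disappears thanks to Proposition \ref{extremal_dim}(2), but for general $t$ one must verify that $1\notin \Xc_{W,\xi}$ uniformly. Lemma \ref{q-num} supplies what is needed: denominators in $p_\g(\la)$ coming from $\al\in \Rm^+_\k$ evaluate to non-vanishing integers on the weight spaces involved, while those from $\al\in \Rm^+_{\g/\k}$ are $q$-numbers whose reciprocals tend to zero as $q\to 1$. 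Consequently each $\Xc_{W,\xi}$ stays away from a suitable punctured neighbourhood of $q=1$, so $e_\xi$ defines a genuine idempotent over $\C_1(q)$ whose $q\to 1$ limit is the classical projector, completing the proof of flatness.
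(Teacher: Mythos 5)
Your proposal is correct and follows essentially the same route as the paper: you exhibit the quantum bundle as a direct summand of a free $\Tc^\k$-module via the decomposition obtained from Lemmas~\ref{rearrange_trivial} and~\ref{dec_of_inv}, and you trace the quasi-classical limit of the splitting back to the shifted extremal projectors through Proposition~\ref{extremal_dim}. The paper phrases this as a chain of equivariant isomorphisms for $W^*\tp\Tc^\k$ (proving the case of $\Tc^{(0,\xi)}$, with $\Tc^{(\xi,0)}$ by the symmetric argument), whereas you package the same content into an explicit idempotent $e_\xi$, but that is a cosmetic difference.

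One remark on the obstacle you flag. The idempotent $e_\xi\in\End_{\Tc^\k}(V\tp\Tc^\k)$ is a $U_q(\g)$-invariant element of $\End(V)\tp\Tc^\k$, and by the discussion closing Section~6.2 such elements correspond to the finite-dimensional space $\bigl(\End(V)\bigr)^\k$ of classical $\k$-invariants; so its regularity at $q=1$ is governed by only finitely many of the sets $\Xc_{W,\xi}$. Moreover flatness over the local ring $\C_1(q)$ is checked on each Peter--Weyl component separately, and Proposition~\ref{extremal_dim}(1) already excludes a punctured neighbourhood of $q=1$ from every $\Xc_{W,\xi}$. So no uniformity across infinitely many $W$ is actually required, and the appeal to Lemma~\ref{q-num} at the end, while not wrong, is unnecessary for the conclusion.
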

\begin{proof}
It is sufficient to realize  $\Tc^{(\xi,0)}$ and $\Tc^{(0,\xi)}$ as direct summands in a free $\Tc^\k$-module
and show that  such a   decomposition is a deformation of the classical decomposition of the corresponding  trivial vector bundle.
We will do it only for $\Tc^{(0,\xi)}$ as the case of $\Tc^{(\xi,0)}$ is similar.

Pick up a module $W\in \Fin_q(\g)$ such that $\xi\in \La(W^+_0)$.
For almost all $q$ we have an equivariant diagram of isomorphisms of  $\Tc^\k$-modules from Proposition \ref{bundles_embedded}:
$$
\begin{array}{ccccccc}
\op_{\xi\in \La(W^+_0)}W^{(0,\xi)}\tp \Tc^{(0,\xi)}&\dashrightarrow & W^*\tp \Tc^\k\\
\downarrow& &\downarrow\\
\op_{\xi\in \La(W^+_0)}W^{(0,\xi)}\tp \Hom(M_{\la,\xi},M_\la)&\longrightarrow & \Hom(W\tp M_{\la}, M_{\la})\\
\end{array}
$$
where $\dashrightarrow$ is determined by the other arrows for almost all $q\not =1$.
On passing to the Peter-Weyl expansion $\Tc^{(0,\xi)}=\sum_{[V]} V^*\tp V^{(0,\xi)}$, this map operates by an  isomorphism on $\Hom$-s
$$
\begin{array}{ccccccc}
\op_{\xi \in \La(W^+_0)} W^{(0,\xi)} \tp  V^{(0,\xi)}&\stackrel{\simeq}{\dashrightarrow} & (V^*\tp W)^{\k}&\simeq & (W^*\tp V)^{\k}
\end{array}
$$
in each isotypic $V^*$-component thanks to Lemma \ref{rearrange_trivial}.
This is a consequence of  isomorphism $V^{(0,\xi)}\simeq (V^*)^{(\xi,0)}$  and Lemma \ref{dec_of_inv}.

All terms here are flat deformations of their classical counterparts by Proposition \ref{extremal_dim} and the isomorphism
is implemented via  extremal projectors which turn to projectors of $U(\k)$ as $q\to 1$.
In the classical limit, these isomorphisms turn to direct sum decomposition of the trivial vector bundle
$W\tp \Tc^\k$.
\end{proof}

In conclusion of the section, we present a direct quasi-classical construction of the algebra of $U_q(\g)$-intertwiners splitting trivial quantum vector bundles into direct sum of
sub-bundles.
Take $V\in \Fin_q(V)$ and let $\pi$ denote the representation homomorphism $U_q(\g)\to \End(V)$.
For a $U_q(\g)$-module  $\Ac$ with action $x\tp a\mapsto x.a$ we call a tensor $A\in \End(V)\tp \Ac$ invariant if $x. A=\pi\bigl(\gm(x^{(1)})\bigr)A\pi(x^{(2)})$.
In the case when $\Ac$ is a module algebra, invariant matrices form a subalgebra
of tensors  in $\End(V)\tp \Ac \rtimes U_q(\g)$ that commute with $(\pi\tp \id)\circ \Delta(x)$ for all $x\in U_q(\g)$.

Now consider an injective  linear map $\End(V)\to \End(V)\tp \Tc$, $A\mapsto  \pi(\Ru_1)TAT^{-1}\pi\bigl(\gm(\Ru_2) q^{2h_\rho}\bigr)$
(the Sweedler notation for R-matrix used).
The image of $A$ is an invariant matrix with respect to the action $\diamond$ on the entries.
By dimensional reasons, this map
delivers a linear isomorphism between $ \bigl(\End(V) \bigr)^{\k}$ and $\End(V)\tp \Tc^\k$ at almost all $q$ including the classical point.
With the $\star$-product
on $\Tc^\k$ the image  of $ \bigl(\End(V) \bigr)^{\k}$ is the algebra of invariant matrices separating quantum sub-bundles
in  $V\tp \Tc^\k$.
This algebra  is  quasi-classical by the mere construction and isomorphic to the subalgebra of classical $\k$-invariants
in $\End(V)$ by Proposition \ref{multiplicities}.
\appendix

\section{Induced modules and duality}
In this appendix we establish a fact that is used in the proof of Proposition \ref{class_lim_S}.
We think it should be a sort of classical but we have not found any reference
so we present it here.
Suppose $\g$ is a Lie algebra of a linear algebraic group and $\k \subset \g$ is a Lie  subalgebra of its closed subgroup.

Introduce an equivariant  pairing $\Ind_\k^\g \C\tp \C[G]^\k\to \C$,
induced by the pairing
\be
U(\g) \tp  \C[G]^\k\to \C, \quad u\tp \phi\mapsto (u.\phi)(1)
\nn
\ee
with $u\in U(\g)$ and $\phi\in \C[G]^\k$.
Presenting $\phi$ as a matrix element $w^*\tp w\in  W^*\tp W^\k$,
where $w \in  W^\k$, we rewrite the pairing as
$
u\tp (w^*\tp w)\mapsto w^*(u.w ).
$
\begin{propn}
\label{ind-inv-pairing}
Suppose there is a finite dimensional $\g$-module $V$ with a vector $v_0\in V^\k$ such that
$\dim \g v_0=\dim \g/\k$ and $v_0\not \in \g v_0$. Then the pairing
is non-degenerate with respect to both tensor factors.
\end{propn}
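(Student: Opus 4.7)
My plan is to prove the two non-degeneracy assertions separately. The easy direction is non-degeneracy in $\C[G]^\k$: let $\phi\in\C[G]^\k$ be non-zero and write it via Peter--Weyl as a matrix element $\phi=w^*\tp w\in W^*\tp W^\k$. Since $G$ is connected, the $G$-span and the $U(\g)$-span of $w$ coincide inside $W$, and non-vanishing of $\phi$ as a function forces $w^*$ to be non-trivial on this span, yielding $u\in U(\g)$ with $w^*(u.w)=(u.\phi)(1)\neq 0$. No use is made of the hypothesis on $(V,v_0)$ in this direction.

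The substantial direction is non-degeneracy in $\Ind_\k^\g\C$, where the hypothesis enters to furnish enough polynomial test functions. I would form the orbit map $\pi\colon G\to V$, $g\mapsto g.v_0$, whose differential at the identity is $\g\to V$, $x\mapsto x.v_0$, with image $\g.v_0$ and kernel exactly $\k$ (the containment $\k\subseteq\ker$ is free from $v_0\in V^\k$, and equality follows from the dimension equality). Hence $\pi$ descends to an injection $G/K\hookrightarrow V$ inducing a local isomorphism $G/K\to O:=G.v_0$ at the identity (equal smooth dimensions), and the pullback $\pi^*\colon f\mapsto(g\mapsto f(g.v_0))$ lands in $\C[G]^\k$ since $\k.v_0=0$. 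Passing to formal completions, this factors as $\hat{\C[V]}_{v_0}\twoheadrightarrow\hat{\C[O]}_{v_0}\xrightarrow{\sim}\hat{\C[G/K]}_{eK}\simeq\hat{\C[G]}_{1}^{\,\k}$, the first map being restriction to the formal subscheme $O\subset V$ and the middle isomorphism being the induced identification on formal neighborhoods.

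The conclusion then follows from the standard perfect pairing $\Ind_\k^\g\C\tp\hat{\C[G]}_1^{\,\k}\to\C$: by PBW, choosing a vector-space complement $\m$ of $\k$ in $\g$ identifies $\Ind_\k^\g\C\simeq S(\m)=S(\g/\k)$, which pairs perfectly with $\hat S((\g/\k)^*)\simeq\hat{\C[G]}_1^{\,\k}$ in the usual way. Given non-zero $\bar u\in\Ind_\k^\g\C$ of finite order $n$, choose an $n$-jet $\hat\phi\in\hat{\C[G]}_1^{\,\k}$ with $\bar u(\hat\phi)\neq 0$; by the surjection on formal completions above, $\hat\phi$ is realised by the image of some polynomial $f\in\C[V]$, and then $\tilde f=f\circ\pi\in\C[G]^\k$ satisfies $(u.\tilde f)(1)=\bar u(\hat\phi)\neq 0$, as required.

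The main obstacle will be verifying that the surjection on formal completions genuinely captures the augmentation (constant term) and not merely the positive-degree part, and this is precisely where the assumption $v_0\notin\g v_0$ enters. It guarantees that $v_0$ is linearly independent from the tangent subspace $\g v_0\subset V$, so one may choose $v_0^*\in V^*$ with $v_0^*(v_0)=1$ and $v_0^*|_{\g v_0}=0$; the pulled-back degree-one function on $G$ then separates $\bar 1\in\Ind_\k^\g\C$ from the kernel of the augmentation, while the higher-degree separations are obtained inductively by replacing $(V,v_0)$ with $(V^{\tp m},v_0^{\tp m})$ and invoking the PBW filtration on $\Ind_\k^\g\C$.
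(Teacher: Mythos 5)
Your route is genuinely different from the paper's, and the geometric skeleton is attractive, but the diagnosis in your last paragraph is off, and that is where the comparison is most instructive. The paper's proof is purely linear-algebraic: with $u_1,\dots,u_m$ a basis of a complement $\m$ of $\k$ and $v_i=u_i.v_0$, it pairs a putative kernel element $\bar u=\sum c_{\vec l}\,u^{\vec l}\bar 1$ of top PBW-degree $n$ against the $\k$-invariant vector $v_0^{\tp n}\in V^{\tp n}$, symmetrizes, and observes that the top-degree contributions $\Sym\bigl(v_1^{\tp l_1}\tp\cdots\tp v_m^{\tp l_m}\bigr)$ with $|\vec l|=n$ are independent of one another and of every term that still carries a $v_0$ factor. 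That last independence is exactly where $v_0\notin\g v_0$ is used: it guarantees $\{v_0,v_1,\dots,v_m\}$ is a linearly independent set, so that in $S^n(V)$ the monomials $v_1^{l_1}\cdots v_m^{l_m}$ lie outside the ideal generated by $v_0$. Killing the top-degree coefficients, one descends by induction. No completions, no geometry.

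By contrast, in your formal-completion approach the hypothesis $v_0\notin\g v_0$ is not needed where you think it is. The surjection $\widehat{\C[V]}_{v_0}\twoheadrightarrow\widehat{\C[O]}_{v_0}$ is automatic because an orbit of an algebraic group is locally closed; the identification $\widehat{\C[O]}_{v_0}\simeq\widehat{\C[G]}_1^{\,\k}$ needs only that $\mathrm{Lie}(G_{v_0})=\k$, which follows from $v_0\in V^\k$ together with $\dim\g v_0=\dim\g/\k$, and certainly does not fail on the constant term (the constant function $1\in\C[V]$ already pulls back to $1\in\C[G]^\k$). In fact, taken at face value your chain of reductions would establish non-degeneracy without invoking $v_0\notin\g v_0$ at all — which is plausible, since the proposition gives a sufficient, not a necessary, condition — but you should then be explicit and own that claim rather than inserting a condition that your argument does not use. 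As written, the final paragraph both misattributes the role of $v_0\notin\g v_0$ and retreats, for the "higher-degree separations," to precisely the tensor-power induction the paper carries out; so the formal-geometric framing ends up being a wrapper around the paper's computation rather than an independent verification. If you want the geometric proof to stand on its own, what needs to be checked carefully is not the constant term, but (i) that the pairing $\Ind_\k^\g\C\tp\widehat{\C[G]}_1^{\,\k}\to\C$ is perfect (this reduces, on associated graded pieces, to the standard perfect pairing $S^n(\g/\k)\tp S^n((\g/\k)^*)\to\C$), and (ii) that $\C[V]\to\widehat{\C[G]}_1^{\,\k}$ has dense image, which is where the orbit-map geometry does its work. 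Spelling out (i) and (ii) would make your argument complete and actually cleaner than the paper's for a reader comfortable with formal schemes; the paper's version has the advantage of being elementary and self-contained.
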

\begin{proof}
  It is straightforward that the pairing has no kernel in the right factor (that is a consequence
  that the Hopf pairing between $U(\g)$ and $\C[G]$ is non-degenerate).

  Pick up a basis in $\g$ such that its first $m$ elements  $u_1,\ldots, u_m$ span a subspace transversal to $\k$
  and put $v_i=u_i.v_0\in V$.
  Put $u^{\vec l}=u_1^{l_1}\ldots u_m^{l_m}$, where $\vec l=(l_1,\ldots, l_m)$ with $l_i\in \Z_+$, and denote $|\vec l|=\sum_{i=1}^{m}l_i$.
  Then ordered PBW monomials $u^{\vec l}$, form a basis
  in $\Ind_\k^\g \C$. Suppose that the element $u=\sum_{\vec l,j}c_{\vec l} u^{\vec l}$, where $c_{\vec l}\in \C$, is in the kernel
  of the pairing.
  The sum is finite, so let $n$ be the highest degree $|\vec l|$ of its $u^{\vec l}$.
  Take $w =v_0\tp \ldots \tp v_0 \subset V^{\tp n} $.
  As $u$ is in the kernel by assumption, one should have
  $
  \sum_{\vec l}c_{\vec l}u^{\vec l}.w=0.
  $
  Let us show  that it is impossible.

  Denote by  $\Sym$ the
  symmetrizing projector in tensor powers of $V_0$.
  Every such element with $|\vec l|=n$ produces $m!\Sym(v_1^{\tp l_1}\tp \ldots \tp v_m^{\tp l_m})+\ldots$, where
 the terms containing $\Sym(v_0\tp \ldots )$ are suppressed. Such terms are also resulted from
  $u^{\vec l}.w$ with $|\vec l|<n$.
  But $\Sym(v_1^{\tp l_1}\tp \ldots \tp v_m^{\tp l_m})$ are linearly independent and independent from
  the suppressed terms since  $\{v_i\}_{i=0}^{m}$ are independent. Therefore all $c_{\vec l}$ with $|\vec l|=n$ are zero.
  Descending induction on $n$  proves that the pairing has no kernel in $\Ind_\k^\g \C$.
\end{proof}
Here is a geometrical interpretation of the conditions of Proposition \ref{ind-inv-pairing}: the local homogeneous space
$(\g,\k)$ is realized as the orbit of the vector $v\in V$, and $v$ is transversal to the orbit.
These conditions are fulfilled for semi-simple conjugacy classes of algebraic groups. Such a class can be realized
as an orbit of a vector $v$ in some representation \cite{GW}, Theorem 11.1.13. Since the Cartan subalgebra is in
the stabilizer, $v$ carries  zero weight and is therefore transversal to $\g v$.

\vspace{20pt}
\noindent
\underline{\large \bf Acknowledgement.}

\vspace{5pt}
\noindent
This research was done at the Center of Pure Mathematics, MIPT.
It was was partly supported by Ministry of Science and Higher Education of
the Russian Federation, agreement no. 075-15-2022-289.

\subsection*{Declarations}
\subsubsection*{Funding}
The author received partial research support from   Ministry of Science and Higher Education of
the Russian Federation, agreement no. 075-15-2022-289.
\subsubsection*{Competing interests}
The author have no competing interests to declare that are relevant to the content of this article.

 \end{document}